\newcommand{\noun}[1]{\textsc{#1}}
\numberwithin{section}{chapter}
\numberwithin{equation}{section}
\numberwithin{figure}{section}
\theoremstyle{plain}
\newtheorem{thm}{\protect\theoremname}
  \theoremstyle{plain}
  \newtheorem{prop}[thm]{\protect\propositionname}
  \theoremstyle{remark}
  \newtheorem*{rem*}{\protect\remarkname}
  \theoremstyle{definition}
  \newtheorem{defn}[thm]{\protect\definitionname}
  \theoremstyle{plain}
  \newtheorem{cor}[thm]{\protect\corollaryname}
  \theoremstyle{definition}
  \newtheorem{example}[thm]{\protect\examplename}
  \theoremstyle{plain}
  \newtheorem*{thm*}{\protect\theoremname}
  \theoremstyle{plain}
  \newtheorem{lem}[thm]{\protect\lemmaname}
  \providecommand{\corollaryname}{Corollary}
  \providecommand{\definitionname}{Definition}
  \providecommand{\examplename}{Example}
  \providecommand{\lemmaname}{Lemma}
  \providecommand{\propositionname}{Proposition}
  \providecommand{\remarkname}{Remark}
  \providecommand{\theoremname}{Theorem}
\providecommand{\theoremname}{Theorem}
\begin{document}

\title[Oscillation bounds for amenable ergodic averages]{Fluctuation bounds for ergodic averages of amenable groups on uniformly
convex Banach spaces}
\begin{abstract}
We study fluctuations of ergodic averages generated by actions of
amenable groups. In the setting of an abstract ergodic theorem for
locally compact second countable amenable groups acting on uniformly
convex Banach spaces, we deduce a highly uniform bound on the number
of fluctuations of the ergodic average for a class of Følner sequences
satisfying an analogue of Lindenstrauss's temperedness condition.
Equivalently, we deduce a uniform bound on the number of fluctuations
over long distances for arbitrary Følner sequences. As a corollary,
these results imply associated bounds for a continuous action of an
amenable group on a $\sigma$-finite $L^{p}$ space with $p\in(1,\infty)$. 
\end{abstract}

\date{January 22 2018}

\author{\noun{Andrew Warren}}

\author{Carnegie Mellon University}

\maketitle
\tableofcontents{}

\chapter*{Introduction}

The topic at hand is indicated by the following diagram:
\[
\xymatrix{\mbox{Classical Ergodic Theory}\ar[d]\ar[r] & \mbox{Ergodic Theory of Group Actions}\ar@{-->}[d]\\
\mbox{Effective Ergodic Theory}\ar@{-->}[r] & \mbox{Effective Ergodic Theory of Group Actions}
}
\]

Classical ergodic theory concerns itself with the study of measure-preserving
transformations $T:X\rightarrow X$ on probability spaces $(X,\mu)$.
In the earliest applications, $X$ was typically understood to be
the phase space of some physical system, and $T$ encoded discrete
time evolution of the system; $\mu$ would be some natural measure
on the phase space which was invariant under time evolution. The most
basic results in classical ergodic theory are the \emph{ergodic theorems}
of von Neumann and Birkhoff, which respectively assert that if $f:X\rightarrow\mathbb{R}$
is some observable feature of the system, and we consider the \emph{average
value }of the observable after a certain amount of time, namely $\frac{1}{N}\sum_{i=0}^{N-1}f\circ T^{i}$,
then (i) if $f\in L^{2}(X,\mu)$ then $\frac{1}{N}\sum_{i=0}^{N-1}f\circ T^{i}$
converges in $L^{2}$ norm, and (ii) if $f\in L^{1}(X,\mu)$, then
$\frac{1}{N}\sum_{i=0}^{N-1}f\circ T^{i}$ converges in $L^{1}$ and
pointwise $\mu$-almost surely. 

Typically, classical ergodic theory is understood as a type of \emph{soft
analysis} \textemdash{} convergence theorems are stated in asymptotic
form without explicit constants, and proofs are carried out using
abstract non-computational tools from functional analysis. This is
not entirely a coincidence: very early in the development of the theory,
it became apparent that it is often \emph{impossible} to find explicit
rates of convergence in ergodic theorems as they are usually stated.
The domain of \emph{effective ergodic theory} seeks to determine when
results in ergodic theory can be made computationally explicit, perhaps
under restricted circumstances. A notable feature of the area is that
numerous statements in ergodic theory can be naturally recast in terms
of \emph{weak modes of uniform convergence }originally developed within
constructive mathematics and proof theory. 

Likewise, work in classical ergodic theory eventually determined both
that (i) for many ergodic theorems, the choice of underlying space
$X$ was not of central importance, and (ii) numerouse theorems could
be just as easily stated (and less easily, proved) if, relaxing the
metaphor of time evolution, one considers multiple transformations
acting concurrently on a space, or even an entire group of transformations
acting on a space. The \emph{ergodic theory of group actions }seeks
to understand how the choice of acting group alters the character
of the theory. 

This thesis offers a contribution to the effective ergodic theory
of group actions, and in particular to the mean ergodic theorem for
actions of groups which are \emph{amenable}. The amenable groups comprise
a large and varied class, and include a number of families of groups
of independent interest, such as: all locally compact abelian groups,
upper triangular matrix groups, solvable groups, and others. The ergodic
theorems for amenable groups may also be understood as the most general
extension of the original, classical ergodic theorems in terms of
modifying the acting group, such that the resulting generalization
actually still contains the classical theorem as a special case. 

In this document, we assume that the reader has some degree of comfort
with classical ergodic theory. However, no background in effective
ergodic theory or the ergodic theory of amenable group actions is
assumed. In Chapter 1, we discuss weak modes of uniform convergence
and their relevance for classical ergodic theory. In Chapter 2, we
give a survey of some aspects of the theory of amenable groups, in
order to give a flavour for the field, and discuss how both the proof
of the mean ergodic theorem and the proof that the mean ergodic theorem
has no uniform rate of convergence generalize to the amenable setting.
Finally, in Chapter 3, we show that the mean ergodic theorem for amenable
group actions has effective convergence information in terms of an
explicit uniform bound on fluctuations of the ergodic average over
long distances. 

The commutative diagram above may also be used as a leitfaden: Chapters
1 and 2 may be read out of order, but parts of both are needed for
work in Chapter 3. 

Finally, we should remark that, although there is no direct use of
tools from mathematical logic in this work, nonetheless this research
has been influenced in many ways by the logical research programme
of \emph{proof mining}. A very brief discussion of two connections
between this work and the proof mining literature appears in Appendix
B. 

This work was completed as part of the author's Master's thesis. 
There are a number of people whose help in the course of this research
has proved invaluable. I would especially like to thank my advisor,
Jeremy Avigad, for helpful suggestions too numerous to mention; Clinton
Conley, for introducing me to amenable groups; Yves Cornulier and
Henry Towsner, for helpful discussions when the project was in its
early stages; Máté Szabó, for pleasant distractions; and Theodore
Teichman, for unflagging moral support. Naturally, all remaining errors
are my own. 
\begin{flushright}
Andrew Warren
\par\end{flushright}

\begin{flushright}
Pittsburgh, Pennsylvania
\par\end{flushright}

\begin{flushright}
January 2019
\par\end{flushright}

\chapter{Beyond Rates of Convergence}

In this chapter, we review rates of convergence and other forms of
convergence information which have proved relevant in classical ergodic
theory. In Section 1, we introduce weaker forms of uniform convergence
than a uniform rate of convergence, and discuss some connections with
computability theory and constructive mathematics. In Section 2, we
give a well-known proof that there is no uniform rate of convergence
in the von Neumann and Birkhoff ergodic theorems, and review some
related work on explicit convergence information in these ergodic
theorems. 

\section{Modes of Uniform Convergence}

To say that every sequence in some class $\mathcal{S}$ converges
is, for many purposes, too vague. In what follows, we consider several
distinct ways that the sequences in $\mathcal{S}$ might all converge
in a \emph{uniform} way.
\begin{enumerate}
\item There is a \emph{uniform rate of convergence}: there exists a function
$r:\mathbb{R}^{+}\rightarrow\mathbb{N}$ such that for every $\varepsilon>0$,
and every $m,n\geq r(\varepsilon)$, it holds for all $(x_{n})\in S$
that $\Vert x_{n}-x_{m}\Vert<\varepsilon$.
\item There is a \emph{uniform fluctuation bound}: there exists a function
$\lambda:\mathbb{R}^{+}\rightarrow\mathbb{N}$ such that for every
$\varepsilon>0$, and every $(x_{n})\in\mathcal{S}$, the \emph{number
of $\varepsilon$-fluctuations} is at most $\lambda(\varepsilon)$.
That is to say, for every $(x_{n})$ and every finite sequence $n_{1},\ldots,n_{k}$
such that for all $i\in[1,k)$, $\Vert x_{n_{i}}-x_{n_{i+1}}\Vert\geq\varepsilon$,
it necessarily holds that $k\leq\lambda(\varepsilon)$.
\item There is a \emph{uniform fluctuation bound at distance $\beta$}:
a weakened version of the previous form of uniform convergence, which
will be especially important for us. A bound on fluctuations at distance
$\beta$ only checks for fluctuations which are ``far enough apart''.
Explicitly, for each $\varepsilon>0$, let $\beta(-,\varepsilon):\mathbb{N}\rightarrow\mathbb{N}$
be some strictly increasing function. Then there is a uniform fluctuation
bound at distance $\beta$ provided that there exists some function
$\lambda_{\beta}:\mathbb{R}^{+}\rightarrow\mathbb{N}$ such that for
every $\varepsilon>0$, every $(x_{n})$, and every finite sequence
$n_{1},\ldots,n_{k}$ \emph{with the property that $\beta(n_{i},\varepsilon)\leq n_{i+1}$
for all $i\in[0,k)$ }such that for all $i\in[1,k)$, $\Vert x_{n_{i}}-x_{n_{i+1}}\Vert\geq\varepsilon$,
it necessarily holds that $k\leq\lambda_{\beta}(\varepsilon)$. (Setting
$\beta(n,\varepsilon)=n+1$ for each $\varepsilon>0$ reduces this
condition to (2) above.)
\item There is a \emph{uniform rate of metastability}: given $\varepsilon>0$,
there exists a functional $\Phi(-,\varepsilon):\mathbb{N}^{\mathbb{N}}\rightarrow\mathbb{N}$
such that for all strictly increasing $F:\mathbb{N}\rightarrow\mathbb{N}$,
it holds for all $(x_{n})$ that there exists an $N\leq\Phi(F,\varepsilon)$
such that for all $n,m\in[N,F(N)]$, $\Vert x_{n}-x_{m}\Vert<\varepsilon$.
In other words, if we are searching for a finitary period of stability
for the sequence $(x_{n})$ of length specified by $F$, then $\Phi$
gives an upper bound on how far we have to search.
\end{enumerate}
We have listed these forms of uniformity in descending order of strength.
\begin{prop}
If $\mathcal{S}$ is some family of sequences, then with respect to
the preceding list of statements, $1\Rightarrow2\Rightarrow3\Rightarrow4$.
In general, none of the converse implications hold.
\end{prop}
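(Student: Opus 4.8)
The plan is to verify the three forward implications by exhibiting the witnessing objects outright, and to refute the converses with three explicit families. For $1\Rightarrow 2$: if $r$ is a uniform rate of convergence, put $\lambda(\varepsilon):=r(\varepsilon)+1$; given $(x_n)\in\mathcal S$ and an increasing sequence $n_1<\dots<n_k$ with $\|x_{n_i}-x_{n_{i+1}}\|\ge\varepsilon$ throughout, no two consecutive $n_i,n_{i+1}$ can both be $\ge r(\varepsilon)$, so at most one $n_i$ is $\ge r(\varepsilon)$, forcing $k\le r(\varepsilon)+1$. For $2\Rightarrow 3$: since each $\beta(-,\varepsilon)$ is strictly increasing, a $\beta$-spaced $\varepsilon$-fluctuation sequence is a fortiori a strictly increasing $\varepsilon$-fluctuation sequence in the sense of (2) (if some $n_{i+1}=n_i$ the norm inequality would fail), so $\lambda_\beta:=\lambda$ works verbatim.

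The one implication needing an idea is $3\Rightarrow 4$. Fixing $\varepsilon>0$ and strictly increasing $F$, I would \emph{interleave iterations of $F$ and of $\beta$}: set $A_0:=0$, $A_{i+1}:=\beta\big(F(A_i),\varepsilon/2\big)$, $M:=\lambda_\beta(\varepsilon/2)$, and declare $\Phi(F,\varepsilon):=A_{M+1}$, a number depending only on $F$ and $\varepsilon$ once $\beta$ and $\lambda_\beta$ are fixed. Given $(x_n)\in\mathcal S$, suppose that none of the $M+2$ intervals $[A_i,F(A_i)]$, $0\le i\le M+1$, is a window of $\varepsilon$-stability, and pick witnesses $p_i,q_i\in[A_i,F(A_i)]$ with $\|x_{p_i}-x_{q_i}\|\ge\varepsilon$. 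I would then \emph{stitch} these into a single chain: put $t_0:=p_0$, and having chosen $t_i\in[A_i,F(A_i)]$ let $t_{i+1}$ be whichever of $p_{i+1},q_{i+1}$ satisfies $\|x_{t_i}-x_{t_{i+1}}\|\ge\varepsilon/2$ --- one of them must, since $\|x_{t_i}-x_{p_{i+1}}\|+\|x_{t_i}-x_{q_{i+1}}\|\ge\|x_{p_{i+1}}-x_{q_{i+1}}\|\ge\varepsilon$. As $t_i\le F(A_i)$ and $\beta(-,\varepsilon/2)$ is nondecreasing, $\beta(t_i,\varepsilon/2)\le\beta(F(A_i),\varepsilon/2)=A_{i+1}\le t_{i+1}$, so $(t_i)_{i=0}^{M+1}$ is a $\beta$-spaced $(\varepsilon/2)$-fluctuation sequence of length $M+2>M=\lambda_\beta(\varepsilon/2)$, a contradiction; hence some $A_i\le\Phi(F,\varepsilon)$ is a window of $\varepsilon$-stability. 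The step I expect to be the main obstacle is exactly this stitching: the per-window data bound only $\|x_{p_i}-x_{q_i}\|$ and say nothing about the distances \emph{between} consecutive windows, and repairing this through the triangle inequality costs the factor of two in scale that produces the $\varepsilon/2$ and $\lambda_\beta(\varepsilon/2)$.

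For the failure of the converses I would present three families of $\{0,1\}$-valued sequences in $\mathbb R$, each with a single nontrivial feature that recedes to infinity along the family. (i) $2\not\Rightarrow 1$: the staircase $s^{(k)}_n=1$ for $n\ge k$ and $0$ otherwise, with $\lambda(\varepsilon)\le 2$ for every $\varepsilon$, but whose jump occurs arbitrarily late, so no uniform rate of convergence exists. (ii) $3\not\Rightarrow 2$: let $o^{(k)}_n=1$ when $n$ is odd and $k\le n\le 2k$, and $0$ elsewhere; then $o^{(k)}$ admits a $1$-fluctuation sequence of length $k+1$, so there is no uniform fluctuation bound, yet with $\beta(n,\varepsilon):=2n+1$ any $\beta$-spaced sequence meets the block $[k,2k]$ at most once, whence $\lambda_\beta\le 3$. (iii) $4\not\Rightarrow 3$: given $\beta$, let $p^{(k)}_n=1$ when $n$ is odd and $k\le n\le H(k)$, and $0$ elsewhere, with $H$ chosen to outrun every finite iterate of $n\mapsto\beta(n,1)$ --- say $H(k)$ equal to the $k$-fold iterate of $n\mapsto\beta(n,1)+1$ evaluated at $k$. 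The oscillating block recedes to infinity with $k$, so $[0,F(0)]$ already witnesses $\varepsilon$-stability once $k>F(0)$, while the finitely many remaining blocks form a bounded set escaped by one further window; thus $\Phi(F,\varepsilon)$ exists and (4) holds, whereas the blocks $[k,H(k)]$ accommodate $\beta$-spaced $1$-fluctuation sequences of every length, so the distance-$\beta$ bound fails. I would add the remark that this dependence of $H$ on $\beta$ is unavoidable: a single family defeating \emph{every} $\beta$ would have to oscillate on arbitrarily long initial segments, which is incompatible with (4); so what is really established is that for no fixed $\beta$ does (4) imply the associated distance-$\beta$ bound.
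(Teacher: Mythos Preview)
Your argument is correct and follows the same overall strategy as the paper. The forward implications are handled identically in spirit; your $3\Rightarrow 4$ stitching is in fact slightly tighter than the paper's, which skips to every third window and obtains $\Phi(F,\varepsilon)=\tilde F^{\,2\lambda_\beta(\varepsilon/2)+3}(1)$ with $\tilde F(n)=\max\{F(n),\beta(n,\varepsilon/2)\}$, but the idea---interleave $F$ with $\beta(\cdot,\varepsilon/2)$ and halve $\varepsilon$ via the triangle inequality---is the same. Your counterexamples are also of the same flavour: for $2\not\Rightarrow 1$ they coincide exactly; for $3\not\Rightarrow 2$ you fix the particular $\beta(n,\varepsilon)=2n+1$, whereas the paper treats an arbitrary superaffine $\beta$ and additionally observes (which you do not) that when $\beta(n,\varepsilon)\le n+k_\varepsilon$ for some constant $k_\varepsilon$, condition (3) actually \emph{does} imply (2); for $4\not\Rightarrow 3$ the paper first handles $\beta(n,\varepsilon)=n+1$ and then stretches the sequences to accommodate an arbitrary $\beta$, while you build the $\beta$-dependence directly into $H$. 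One small point to tighten in your example (iii): the iterates $k,\ \beta(k,1)+1,\ \ldots$ need not alternate in parity, so the values of $p^{(k)}$ along them need not alternate between $0$ and $1$; you should remark that the ``$+1$'' in your iteration leaves enough slack to shift each iterate by at most one so as to force alternating parities while preserving the $\beta$-spacing.
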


\begin{proof}
$(1\Rightarrow2)$ If $r(\varepsilon)$ is a uniform rate of convergence
for $\mathcal{S}$, this means that all possible indices $n_{i},n_{i+1}$
with $n_{i}<n_{i+1}$ and with the property that $\Vert x_{i}-x_{i+1}\Vert\geq\varepsilon$
must have $i<r(\varepsilon)$. Therefore the number of $\varepsilon$-fluctuations
is at most $r(\varepsilon)$, and we can just set $\lambda(\varepsilon)=r(\varepsilon)$.

$(2\Rightarrow3)$ Obvious.

$(3\Rightarrow4)$ Fix an $\varepsilon>0$. Define $\tilde{F}(n):=\max\{F(n),\beta(n,\varepsilon/2)\}$.
Also define an increasing sequence of naturals by $N_{1}=1$ and $N_{k+1}=\tilde{F}(N_{k})$.
Now, observe that if there is an $\varepsilon$-fluctuation in the
first of the intervals $[N_{i-1},\tilde{F}(N_{i-1})]$, $[N_{i},\tilde{F}(N_{i})]$,
$[N_{i+1},\tilde{F}(N_{i+1})]$, then (thanks to the triangle inequality)
it must be possible to pick an index $j_{1}$ from $[N_{i-1},\tilde{F}(N_{i-1})]$
and an index $j_{2}$ from $[N_{i+1},\tilde{F}(N_{i+1})]$ such that
$\Vert x_{j_{1}}-x_{j_{2}}\Vert\geq\varepsilon/2$, and this $\varepsilon/2$-fluctuation
is at distance $\beta(-,\varepsilon/2)$ (since $j_{2}\geq\max\{F(N_{i}),\beta(N_{i},\varepsilon/2)\}\geq\beta(N_{i},\varepsilon/2)\geq\beta(j_{1},\varepsilon/2)$).
Therefore, if a sequence $(x_{n})$ has a $\varepsilon$-fluctuation
in every interval $[N_{i},\tilde{F}(N_{i})]$ for $i=1,\ldots,2\lambda_{\beta}(\varepsilon/2)+3$,
then we can find at least $\lambda_{\beta}(\varepsilon/2)+1$ many
$\varepsilon/2$-fluctuations at distance $\beta$ in $(x_{n})$.
Consequently, if we assume that $\mathcal{S}$ has $\lambda_{\beta}(\varepsilon/2)$
as a uniform bound on the number of $\varepsilon/2$-fluctuations
at distance $\beta(-,\varepsilon/2)$, then at least one interval
$[N_{i},\tilde{F}(N_{i})]$ (for $i\in[1,2\lambda_{\beta}(\varepsilon)+3]$)
must \emph{not} have an $\varepsilon$-fluctuation. This implies that
for every $(x_{n})\in\mathcal{S}$, we can pick an $N\leq N_{2\lambda_{\beta}(\varepsilon/2)+3}$
so that $[N,\tilde{F}(N)]$ (and therefore $[N,F(N)]$) has no $\varepsilon$-fluctuations.
In other words there is a uniform bound on the rate of metastability
of the form $\Phi(F,\varepsilon)=\tilde{F}^{2\lambda_{\beta}(\varepsilon/2)+3}(1)$,
where the exponent $2\lambda_{\beta}(\varepsilon/2)+3$ denotes iterated
application of $\tilde{F}$. 

$(4\not\Rightarrow3)$ 
First, we actually prove that $4\not\Rightarrow2$. Since 2 is a special
case of 3, this tells us that there is at least one distance function
$\beta$ for which a bound on the rate of metastability does not give
a bound on the number of $\varepsilon$-fluctuations at distance $\beta$.
However, this does not show that $4$ is strictly weaker than $3$
for \emph{every} distance function $\beta$. Hence this proves $4\not\Rightarrow3$
but only in a weak sense. We will then modify the proof that $4\not\Rightarrow2$
to get a proof that given an \emph{arbitrary} $\beta$, a bound on
the rate of metastability is strictly weaker than a bound on the number
of $\varepsilon$-fluctuations at distance $\beta$, thus proving
$4\not\Rightarrow3$ in a stronger sense as well. 

We borrow a counterexample from Avigad and Rute \cite{avigad2015oscillation}.
Let $\mathcal{S}$ denote a countable family of binary sequences where
the $j$th sequence is identically zero for the first $j-1$ terms,
and then oscillates $j$ times between $0$ and $1$ beginning at
the $j$th element, and then is constant thereafter. Evidently $\mathcal{S}$
has no uniform bound on the number of $\varepsilon$-fluctuations
for any $\varepsilon\leq1$. Nonetheless it has a uniform bound on
the rate of metastability. Indeed, fix an $\varepsilon\in(0,1)$ and
take any increasing function $F:\mathbb{N}\rightarrow\mathbb{N}$.
Pick some $(x_{n})\in\mathcal{S}$. If $F(1)<j$, then $[1,F(1)]$
has no fluctuations. Otherwise, $F(1)\geq j$; in this case, observe
that at least one of the intervals 
\[
[F(1),F^{2}(1)],[F^{2}(1),F^{3}(1)],\ldots,[F^{F(1)+1}(1),F^{F(1)+2}(1)]
\]
has no fluctuations (in particular, the last interval in this list!),
simply because $F$ is increasing, and therefore the fact that $F(1)\geq j$
implies that $F^{F(1)+1}\geq2j$. Consequently, 
\[
\Phi(F,\varepsilon)=\begin{cases}
1 & \varepsilon>1\\
F^{F(1)+1} & \varepsilon\leq1
\end{cases}
\]
is a uniform bound on the rate of metastability for $\mathcal{S}$. 

Now we adapt this argument to show that $4\not\Rightarrow3$ for \emph{arbitrary}
distance function $\beta$. Using $\mathcal{S}$ from above, we define
a new family $\mathcal{S}^{\prime}$ in the following way. Given $(x_{n})\in\mathcal{S}$,
define $(y_{n})\in\mathcal{S}^{\prime}$ by 
\[
y_{i}=\begin{cases}
x_{1} & 1\leq i<\tilde{\beta}(1)\\
x_{2} & \tilde{\beta}(1)\leq i<\tilde{\beta}^{2}(1)\\
 & \vdots\\
x_{n} & \tilde{\beta}^{n-1}(1)\leq i<\tilde{\beta}^{n}(1)\\
 & \vdots
\end{cases}
\]
where $\tilde{\beta}(n)$ is shorthand for $\beta(n,1)$. It follows
that the number of $1$-fluctuations of distance $\beta$ in $(y_{n})$
is the same as the number of $1$-fluctuations in $(x_{n})$: given
a sequence of indices $n_{1}<n_{2}<\ldots<n_{k}$ witnessing the $1$-fluctuations,
we can pick a term $y_{j_{1}}$ of $(y_{n})$ in the block of terms
which are all equal to $x_{n_{1}}$, and then there will be a term
of $(y_{n})$ equal to $x_{n_{1}+1}$ which is at distance at least
$\beta(j_{1},1)$ from $y_{j_{1}}$, so \emph{a fortiori} we can find
a term equal to $x_{n_{2}}$ which is at distance $\beta(j_{1},1)$
from $y_{j_{1}}$. And so on. (This uses the fact that $\tilde{\beta}$
is increasing \textemdash{} for $k<\tilde{\beta}^{n-1}(1)$ we have
that $\tilde{\beta}(k)<\tilde{\beta}^{n}(1)$.) In this way we can
find $k$ $\varepsilon$-fluctuations at distance $\beta$ in $(y_{n})$.
So for our particular original family of sequences $\mathcal{S}$,
the $j$th sequence will be constant zero in the interval $[1,\tilde{\beta}^{j-1}(1))$,
and then alternate between one and zero for the next $j$-many intervals
of the form $[\tilde{\beta}^{i-1}(1),\tilde{\beta}^{i}(1))$ for $i\in[j,2j]$. 

Now, let $F:\mathbb{N}\rightarrow\mathbb{N}$ be an increasing function.
Consider some $(y_{n})\in\mathcal{S}^{\prime}$. If $F(1)<\tilde{\beta}^{j-1}(1)$
then $[1,F(1)]$ contains no fluctuations. Otherwise $F(1)\geq\tilde{\beta}^{j-1}(1)$.
Now, if it so happens that $F^{2}(1)<\tilde{\beta}^{j}(1)$, then
$[F(1),F^{2}(1)]$ is a sub-interval of $[\tilde{\beta}^{j-1}(1),\tilde{\beta}^{j}(1))$,
and hence has no fluctuations. Otherwise $F^{2}(1)\geq\tilde{\beta}^{j}(1)$.
We can now ask whether $F^{3}(1)<\tilde{\beta}^{j+1}(1)$. 

Repeat this case-wise reasoning over and over, until we reach $F^{j+2}(1)$.
If we make it this far without finding an interval with no fluctuations,
this means that $F^{j+2}(1)\geq\tilde{\beta}^{2j}(1)$. However, after
$\tilde{\beta}^{2j}(1)$, we know by construction that $(y_{n})$
is constant forever. Hence $[F^{j+2}(1),\infty)$ has no fluctuations.
So in particular, neither does $[F^{F(1)+2}(1),F^{F(1)+3}(1)]$, since
(if we made it this far in the case reasoning) we know that $F(1)\geq j$. 

In other words, at least one of the intervals 
\[
[1,F(1)],[F(1),F^{2}(1)],\ldots,[F^{F(1)+2}(1),F^{F(1)+3}(1)]
\]
has no fluctuations. It follows that 
\[
\Phi(F,\varepsilon)=\begin{cases}
1 & \varepsilon>1\\
F^{F(1)+2} & \varepsilon\leq1
\end{cases}
\]
is a uniform bound on the rate of metastability for $\mathcal{S}^{\prime}$. 

$(3\not\Rightarrow2)$ Fix an $\varepsilon>0$. If $\beta(n,\varepsilon)$
is dominated by $n+k_{\varepsilon}$ for some constant $k_{\varepsilon}$,
then in fact 3 \emph{does} imply 2: one can simply take $\lambda(\varepsilon)=2k_{\varepsilon}\lambda_{\beta}(\varepsilon)$.
\footnote{To see this: take any sequence which is already at distance $\beta$.
Then in between indices $n_{i}$ and $n_{i+1}$, there are $2k_{\varepsilon}$-many
``forbidden'' indices which might add some $\varepsilon$-fluctuations.
Since for \emph{every} sequence of indices we can get at most $2k_{\varepsilon}$
times as many $\varepsilon$-fluctuations by relaxing the ``at distance
$\beta$'' restriction, the same holds for the maximum number of
$\varepsilon$-fluctuations.} Suppose, therefore, that $\beta(n,\varepsilon)$ is \emph{superaffine}:
namely, that there is an increasing sequence $(n_{k})$ such that
for every $n_{k}$, $\beta(n_{k},\varepsilon)\geq n_{k}+k$. Now consider
the following family of binary sequences: each sequence is $0$ everywhere,
except that whenever $k$ is even, the $k$th sequence has a series
of $k$-many oscillations between $0$ and 1 immediately following
the $n_{k}$th index. Then, for every $\varepsilon\leq1$, it holds
that $\lambda(\varepsilon)=2$ is a uniform upper bound on the number
of $\varepsilon$-fluctuations at distance $\beta$, but there is
no uniform upper bound on the number of $\varepsilon$-fluctuations. 

$(2\not\Rightarrow1)$ Consider a family of binary sequences such
that for the $n$th sequence, the first $n$ terms are all $0$ and
the remaining terms are all 1. This family of sequences has a uniform
bound on fluctuations for every $\varepsilon$ but for $\varepsilon\leq1$
there is no uniform rate of convergence. 
\end{proof}
\begin{rem*}
The preceding proposition is not the end of the story on distinct
modes of uniform convergence. See for instance the recent paper of
Towsner \cite{towsner2017nonstandard}, which gives an infinite hierarchy
of distinct modes of uniform convergence in between a uniform bound
on fluctuations and a uniform rate of metastability. 
\end{rem*}
Rather than considering families of sequences and modes of uniform
convergence, we can also ask whether a single convergent sequence
has, for instance, a rate of convergence which is \emph{computable}.
Notably, in this setting the situation is almost identical: if a sequence
has a computable rate of convergence, then it also has a computable
number of $\varepsilon$-fluctuations, which implies a computable
number of $\varepsilon$-fluctuations at (computable) distance $\beta$,
which in turn implies a computable rate of metastability. In fact,
in this direction, all of the \emph{proofs} are nearly identical!
For observe that in the proofs of the forward directions of the preceding
proposition, at each stage we defined a new modulus in terms of the
previous one \textemdash{} for instance, defining a rate of metastability
in terms of a bound on the number of $\varepsilon$-fluctuations at
distance $\beta$. At each stage, our new definition was simple enough
that the new modulus is \emph{relatively }computable in terms of the
previous one, so if the previous modulus is assumed to be computable
then we're done.

Just as in the case of modes of uniform convergence, the converse
implications are all false. However, the proofs \textemdash{} showing,
for instance, that a computable bound on the number of $\varepsilon$-fluctuations
does not imply a computable rate of convergence, for a single sequence
\textemdash{} are somewhat different in flavour than the converse
directions in the previous proposition. For further discussion in
this vein, we refer the reader to §5 of the paper by Avigad and Rute
\cite{avigad2015oscillation}, and §4 of the paper by Kohlenbach and
Safarik \cite{kohlenbach2014fluctuations} (but see also Appendix
B.1).

In any case, there is an extremely strong analogy between distinct
modes of uniform convergence, and distinct modes of computable convergence.
For this reason, work on weak modes of uniform convergence often draws
on developments from computable analysis/constructive mathematics.
Yet another perspective on the preceding proposition is the constructivist
one: in constructive mathematics, it is not meaningful to assert that
a sequence converges without giving more explicit information about
how this convergence occurs. A very frequent occurrence in constructive
mathematics is that classical notions ``bifurcate'' into multiple
inequivalent constructive analogues; what our discussion indicates
is that the classical notion of convergence has many inequivalent
constructive analogues, including but not limited to ``this sequence
has an explicit rate of convergence'', ``this sequence has an explicit
bound on the number of $\varepsilon$-fluctuations'', etc. 

All this is to say that the results presented later in this document
can be interpreted as giving a more \emph{uniform} version of existing
ergodic theorems, as well as giving a version of existing ergodic
theorems which is sufficiently \emph{computationally explicit} to
be \emph{constructively admissible}. 

\section{Convergence Issues in Classical Ergodic Theory}

We begin this section by reviewing an important negative result concerning
uniform convergence in ergodic theory.
\begin{thm}
\label{thm:Krengel}Let $(X,\mu)$ be a probability space, and let
$T:X\rightarrow X$ be an invertible ergodic measure-preserving transformation
on $(X,\mu)$. Let $p\in[1,\infty)$. Then there is no uniform rate
of convergence for the class of ergodic averages $\left\{ \frac{1}{N}\sum_{i=0}^{N-1}f\circ T^{i};f\in L^{p}(X,\mu)\right\} $,
either in $L^{p}$ norm or pointwise almost surely. 
\end{thm}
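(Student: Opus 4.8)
The plan is to argue by contradiction, following in essence Krengel's construction: use the Rokhlin--Kakutani lemma to build, for each integer $R\geq2$, a single \emph{indicator} function whose ergodic averages still change by a definite amount between times $R$ and $2R$. Writing $A_{N}f:=\frac{1}{N}\sum_{i=0}^{N-1}f\circ T^{i}$, I will establish: there is an absolute constant $c>0$ such that for every integer $R\geq2$ there is a measurable set $A$ with $\bigl\Vert A_{R}\mathbf{1}_{A}-A_{2R}\mathbf{1}_{A}\bigr\Vert_{p}\geq c$, and with $\bigl|A_{R}\mathbf{1}_{A}(x)-A_{2R}\mathbf{1}_{A}(x)\bigr|\geq c$ for every $x$ in a set of measure at least $c$. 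Granting this, suppose $r\colon\mathbb{R}^{+}\to\mathbb{N}$ were a uniform rate of convergence --- in the $L^{p}$-norm sense or in the pointwise-a.e.\ sense --- for the family $\{A_{N}f:f\in L^{p}(X,\mu)\}$. Applying the claim with $R:=\max\{r(c),2\}$ (and noting $\mathbf{1}_{A}$ lies in the unit ball of $L^{p}$) produces $f=\mathbf{1}_{A}$ and times $R,2R\geq r(c)$ at which these averages differ by at least $c$, both in $L^{p}$-norm and on a set of points of positive measure --- contradicting the definition of a rate at $\varepsilon=c$.

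To prove the claim, fix $R\geq2$. Since $T$ is aperiodic (being an ergodic invertible transformation of a non-atomic probability space; the atomic case is trivial and is tacitly excluded), the Rokhlin--Kakutani lemma yields, for the height $h:=4R$ and any $\delta>0$, a base $B$ such that $B,TB,\dots,T^{h-1}B$ are pairwise disjoint and $\mu\bigl(\bigsqcup_{j=0}^{h-1}T^{j}B\bigr)>1-\delta$; fixing $\delta<\tfrac12$ gives $\mu(B)>(1-\delta)/h>1/(8R)$. Let $A:=\bigsqcup_{j=0}^{2R-1}T^{j}B$, the bottom half of the tower. The advantage of this choice is an exact combinatorial description of the relevant Birkhoff sums: if $x=T^{k}b$ with $b\in B$ and $N$ satisfies $k+N\leq h$, then
\[
\sum_{i=0}^{N-1}\mathbf{1}_{A}(T^{i}x)=\bigl|\,\{k,k+1,\dots,k+N-1\}\cap\{0,1,\dots,2R-1\}\,\bigr|,
\]
since within the tower $\mathbf{1}_{A}(T^{j}b)$ depends only on the level $j$ and equals $1$ precisely when $j<2R$. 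Hence $A_{N}\mathbf{1}_{A}$ is constant on each level $T^{k}B$ with $k\leq h-N$: it equals $1$ for $k\leq 2R-N$ and decreases linearly to $0$ as $k$ rises to $2R$.

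Now read off the values at $N=R$ and $N=2R$. On a level $T^{k}B$ with $\tfrac{R}{2}\leq k\leq R$ one has $A_{R}\mathbf{1}_{A}=1$ (as $k\leq 2R-R$) while $A_{2R}\mathbf{1}_{A}=(2R-k)/(2R)\leq\tfrac{3}{4}$, so the two averages differ there by at least $\tfrac14$; there are more than $R/2$ such levels, each of measure exceeding $1/(8R)$. Thus the set $G$ on which $\bigl|A_{R}\mathbf{1}_{A}-A_{2R}\mathbf{1}_{A}\bigr|\geq\tfrac14$ has $\mu(G)>1/16$, and since $p\geq1$ and $\mu(G)\leq1$ we get $\bigl\Vert A_{R}\mathbf{1}_{A}-A_{2R}\mathbf{1}_{A}\bigr\Vert_{p}\geq\tfrac14\,\mu(G)^{1/p}\geq\tfrac14\cdot\tfrac{1}{16}$, while the pointwise inequality holds on $G$; so the claim holds with, say, $c=2^{-8}$. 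The step that genuinely needs care is keeping $\mu(G)$ bounded below by an absolute constant \emph{however large $R$ is}: since a purported rate $r$ may grow arbitrarily fast, the tower height is deliberately scaled linearly in $R$ so that a fixed proportion of its levels lands in the band where the length-$R$ and length-$2R$ averages disagree (and so that all the Birkhoff sums involved stay strictly inside the tower, making the combinatorial identity exact). Everything else --- the combinatorial identity, the level count, and the chasing of explicit constants --- is routine.
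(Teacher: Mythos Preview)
Your proof is correct and considerably simpler than the paper's. Both arguments rest on the Rokhlin tower lemma, but they use it in different ways and ultimately establish results of different strengths.

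The paper fixes an \emph{arbitrary} rate $(\alpha_{N})\downarrow0$ and then carries out an infinite iterative construction: at each stage $n$ a new Rokhlin tower is built and a set $E_{n}$ is modified on a tiny region so that $A_{M_{n}}\mathbf{1}_{E_{n+1}}$ is far from $1/2$ in a controlled way; the limit set $E$ then satisfies $\limsup_{N}\alpha_{N}^{-1}\Vert A_{N}\mathbf{1}_{E}-\tfrac12\Vert_{p}=\infty$ (and similarly pointwise). In other words, a \emph{single} indicator defeats \emph{every} rate simultaneously. Your argument instead produces, for each scale $R$, a \emph{different} indicator $\mathbf{1}_{A}$ (built from one Rokhlin tower of height $4R$) with a definite fluctuation between times $R$ and $2R$; the witness depends on the rate being refuted. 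This is exactly enough to rule out a \emph{uniform} rate for the class $\{A_{N}f:f\in L^{p}\}$ as the theorem asserts, and it avoids the delicate bookkeeping of the paper's nested construction (tracking $\varepsilon_{n}$, $p_{n}$, $M_{n}$, $N_{n}$ and the measure of the successive symmetric differences). What the paper's approach buys in exchange for that extra work is the stronger conclusion that the bad behaviour can be concentrated in one function---a statement genuinely more informative than the negation of uniformity. Your remark that the atomic case is excluded is appropriate; the paper tacitly does the same when it invokes aperiodicity.
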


\begin{proof}
We follow the argument indicated by Krengel (who remarks that this
result was already a well-known folk theorem). The strategy will be
to produce a sequence of measurable subsets $(E_{n})$ of $X$, such
that 

\begin{enumerate}
\item $\mu(E_{n})=\frac{1}{2}$ for every $n$, so in particular (as $N\rightarrow\infty$)
$\frac{1}{N}\sum_{i=0}^{N-1}\mathbf{1}_{E_{n}}\circ T^{i}$ converges
to the constant function $\frac{1}{2}$ in $L^{p}$ and pointwise
a.s., 
\item $E_{n+1}$ is produced by modifying $E_{n}$ on a set of small measure
(say less than $2^{-n}$), so that asymptotically $(E_{n})$ converges
to some measurable set $E\subset X$ which also has measure $\frac{1}{2}$,
and thus (by the mean and pointwise ergodic theorems) $\frac{1}{N}\sum_{i=0}^{N-1}\mathbf{1}_{E}\circ T^{i}$
converges to the constant function $\frac{1}{2}$ in $L^{p}$ and
pointwise a.s., and 
\item the sequence $\frac{1}{N}\sum_{i=0}^{N-1}\mathbf{1}_{E}\circ T^{i}$
converges to $\frac{1}{2}$ (in $L^{p}$ and pointwise a.s.) more
slowly than some prespecified rate of convergence. 
\end{enumerate}
To make that last point more precise, we first fix a sequence $(\alpha_{N})$
of positive reals which converges monotonically to zero. We will then
follow the construction outlined above to produce a measurable set
$E$ with $\mu(E)=\frac{1}{2}$, such that $\limsup_{N\rightarrow\infty}\alpha_{N}^{-1}\Vert\frac{1}{N}\sum_{i=0}^{N-1}\mathbf{1}_{E}\circ T^{i}-\frac{1}{2}\Vert_{p}=\infty$,
and moreover, for almost all $x\in X$, $\limsup_{N\rightarrow\infty}\alpha_{N}^{-1}\vert\frac{1}{N}\sum_{i=0}^{N-1}\mathbf{1}_{E}\circ T^{i}(x)-\frac{1}{2}\vert=\infty$.

In what follows, use the standard shorthand $A_{N}f:=\frac{1}{N}\sum_{i=0}^{N-1}f\circ T^{i}$. 

To initialise the construction, let $E_{1}$ be any subset of $X$
with measure $\frac{1}{2}$. Define also $N_{0}=0$, $N_{1}=1$. Now
suppose that we have already constructed $E_{n}$, again with measure
$\frac{1}{2}$, and have defined $N_{n}>N_{n-1}$, as well as $M_{n-1}>N_{n-1}$.

Now let $\varepsilon_{n}\in(0,\min\{\alpha_{M_{n-1}}/2^{n},1/(N_{n}2^{n})\}]$.
Let $p_{n}$ be an integer which is sufficiently large that $p_{n}^{-1}<\varepsilon_{n}/4$.
Let $M_{n}$ be an integer such that $M_{n}>N_{n}$, and $16\alpha_{M_{n}}n<p_{n}^{-1}$,
and such that there exists some $K_{n}>N_{n}$ such that $M_{n}=4K_{n}$.
Now define $N_{n+1}=p_{n}M_{n}$. Since $T$ is ergodic, and therefore
a.s. aperiodic, we can invoke the Rokhlin tower lemma and produce
a measurable set $B_{n}$ such that the sets $T^{-k}B_{n}$ are all
disjoint for $k\in[0,N_{n+1})$, and $\bigsqcup_{k=0}^{N_{n+1}-1}T^{-k}B_{n}$
has measure at least $1-\varepsilon_{n}/4$. 

Using $B_{n}$, we define the set 
\[
C_{n}:=\bigsqcup_{k=0}^{2M_{n}-1}T^{-k}B_{n}
\]
and note that (simply because $T$ is measure-preserving) $(1-\varepsilon_{n}/4)2M_{n}/N_{n+1}\leq\mu(C_{n})\leq2M_{n}/N_{n+1}$;
by definition of $N_{n+1}$, this reduces to 
\[
(1-\varepsilon_{n}/4)2p_{n}^{-1}\leq\mu(C_{n})\leq2p_{n}^{-1}
\]
so that in particular 
\[
(1-\varepsilon_{n}/4)16n\alpha_{M_{n}}<\mu(C_{n})<\varepsilon_{n}/2.
\]

We now define $E_{n+1}$ by modifying $E_{n}$ on $C_{n}$ in the
following manner. For each $x\in B_{n}$, let $v_{n}(x)$ be the number
of indices in $[0,2M_{n})$ for which $T^{-i}x\in E_{n}$. Define
\[
B_{n,k}:=\{x\in B_{n}\mid v_{n}(x)=k\}.
\]
Note that $\bigsqcup_{i=0}^{2M_{n}-1}\left(E_{n}\cap T^{-i}B_{n,k}\right)=k\cdot\mu(B_{n,k})$.
(Why?) Now, for each $k\in[0,2M_{n}]$, we first remove $\bigsqcup_{i=0}^{2M_{n}-1}\left(E_{n}\cap T^{-i}B_{n,k}\right)$
from $E_{n}$, and then replace it with a set of equal measure in
the following way:

\begin{enumerate}
\item If $k\geq M_{n}$, add $T^{-i}B_{n,k}$ to $E_{n}$ for every $0\leq i<k$. 
\item If $k<M_{n}$, add $T^{-i}B_{n,k}$ to $E_{n}$ for every $2M_{n}-k\leq i<2M_{n}.$
\end{enumerate}
In either case, we've added in $k$ many disjoint sets, each of which
have the same measure as $\mu(B_{n,k})$. Consequently, at the end
of each stage $k$, the measure of (the modified version of) $E_{n}$
is unchanged. 

After repeating this procedure for all $0<k<2M_{n}$, we declare the
resulting set to be $E_{n+1}$. Notably, $E_{n+1}$ has the property
that for every $x\in B_{n}$, the set $\{T^{-i}x\mid0\leq i<M_{n}\}$
is either entirely contained in $E_{n+1}$, or entirely in $E_{n+1}$.
Moreover, $\mu(E_{n+1})=\frac{1}{2}$, and $\mu(E_{n}\Delta E_{n+1})\leq\mu(C_{n})$.

Now, let us first consider norm convergence. Define 
\[
D_{n}:=\bigsqcup_{k=M_{n}}^{5K_{n}-1}T^{-k}B_{n}.
\]
Note that $\mu(D_{n})=K_{n}\mu(B_{n})$. Likewise, notice that notice
that for any $z\in D_{n}$, $z=T^{-k}x$ for some $x\in B_{n}$, $k\in[M_{n},5K_{n})$,
and thus if $i\in(K_{n},M_{n}]$, then $T^{i}z=T^{i-k}x$ and $k-i\in[0,M_{n})$.
It follows that either $T^{i}z\in E_{n+1}$ for every $i\in(K_{n},M_{n}]$,
or $T^{i}x\notin E_{n+1}$ for every $i\in K_{n},M_{n}]$. Since $M_{n}=4K_{n}$,
this implies that either $A_{M_{n}}\mathbf{1}_{E_{n+1}}(x)\geq3/4$,
or $A_{M_{n}}\mathbf{1}_{E_{n+1}}(x)\leq1/4$. Regardless, it follows
that on $D_{n}$, $\vert A_{M_{n}}\mathbf{1}_{E_{n+1}}-\frac{1}{2}\vert\geq\frac{1}{4}$.
This implies that 
\[
\left\Vert A_{M_{n}}\mathbf{1}_{E_{n+1}}-\frac{1}{2}\right\Vert _{1}\geq\frac{1}{4}\cdot\mu(D_{n})
\]
so therefore 
\begin{align*}
\alpha_{M_{n}}^{-1}\left\Vert A_{M_{n}}\mathbf{1}_{E_{n+1}}-\frac{1}{2}\right\Vert _{1} & \geq\alpha_{M_{n}}^{-1}\frac{1}{4}\cdot\mu(D_{n})\\
 & =\alpha_{M_{n}}^{-1}\frac{1}{4}K_{n}\mu(B_{n})\\
 & \geq\alpha_{M_{n}}^{-1}\frac{1}{16}M_{n}\frac{1}{N_{n+1}}(1-\varepsilon_{n}/4)\\
 & =\alpha_{M_{n}}^{-1}\frac{1}{16}p_{n}^{-1}(1-\varepsilon_{n}/4)\\
 & >\alpha_{M_{n}}^{-1}\frac{1}{16}(16\alpha_{M_{n}}n)(1-\varepsilon_{n}/4)\\
 & =n(1-\varepsilon_{n}/4).
\end{align*}

Now, note that 
\begin{align*}
\left\Vert A_{M_{n}}\mathbf{1}_{E_{n+1}}-\frac{1}{2}\right\Vert _{1} & \leq\left\Vert A_{M_{n}}\mathbf{1}_{E}-\frac{1}{2}\right\Vert _{1}+\left\Vert A_{M_{n}}\mathbf{1}_{E_{n+1}}-A_{M_{n}}\mathbf{1}_{E}\right\Vert _{1}\\
 & \leq\left\Vert A_{M_{n}}\mathbf{1}_{E}-\frac{1}{2}\right\Vert _{1}+\left\Vert A_{M_{n}}\right\Vert \cdot\left\Vert \mathbf{1}_{E_{n+1}}-\mathbf{1}_{E}\right\Vert _{1}
\end{align*}
and also (using the dominated convergence theorem) 
\[
\left\Vert \mathbf{1}_{E_{n+1}}-\mathbf{1}_{E}\right\Vert _{1}=\mu(E_{n+1}\Delta E)\leq\sum_{i=n+1}^{\infty}\mu(E_{i}\Delta E_{i+1})\leq\sum_{i=n+1}^{\infty}\mu(C_{i}).
\]
And since $\mu(C_{i})\leq2M_{i}/N_{i+1}=2p_{i}^{-1}$, and we have
that $p_{i}^{-1}<\varepsilon_{i}/4\leq(\alpha_{M_{i-1}}2^{-i})/4$,
and since $(\alpha_{n})$ is decreasing, we can compute 
\[
\sum_{i=n+1}^{\infty}\mu(C_{i})=2\sum_{i=n+1}^{\infty}p_{i}^{-1}<\frac{1}{2}\sum_{i=n+1}^{\infty}\varepsilon_{i}\leq\frac{1}{2}\sum_{i=n+1}^{\infty}\alpha_{M_{i-1}}2^{-i}\leq\frac{1}{2^{n+1}}\alpha_{M_{n}}
\]
 so therefore (since $\Vert A_{M_{n}}\Vert=1$) 
\[
\left\Vert A_{M_{n}}\mathbf{1}_{E_{n+1}}-\frac{1}{2}\right\Vert _{1}<\left\Vert A_{M_{n}}\mathbf{1}_{E}-\frac{1}{2}\right\Vert _{1}+\frac{1}{2^{n+1}}\alpha_{M_{n}}
\]
which, together with our lower bound on $\alpha_{M_{n}}^{-1}\left\Vert A_{M_{n}}\mathbf{1}_{E_{n+1}}-\frac{1}{2}\right\Vert _{1}$,
implies that 
\[
\alpha_{M_{n}}^{-1}\left\Vert A_{M_{n}}\mathbf{1}_{E}-\frac{1}{2}\right\Vert _{1}>n(1-\varepsilon_{n}/4)-\frac{1}{2^{n+1}}.
\]
Evidently this implies that 
\[
\limsup_{N\rightarrow\infty}\alpha_{N}^{-1}\left\Vert A_{N}\mathbf{1}_{E}-\frac{1}{2}\right\Vert _{1}=\infty
\]
as desired. (Since $\Vert\cdot\Vert_{p}\geq\Vert\cdot\Vert_{1}$,
this actually suffices for all $p\in[1,\infty)$.) 

The strategy for pointwise a.s. convergence is similar. Fix some $L\in[2M_{n},N_{n+1})$,
and put $N^{\prime}=L+1$ and $N^{\prime\prime}=L-M_{n}+1$. Consider
some point $\eta\in T^{-L}B_{n}$. Let $\ell^{\prime}$ denote the
number of indices $k\in[0,N^{\prime})$ such that $T^{k}\eta\in A_{n+1}$,
and likewise let $\ell^{\prime\prime}$ denote the number of indices
$k\in[0,N^{\prime\prime})$ such that $T^{k}\eta\in A_{n+1}$. Note
that by our construction, for every $\eta\in T^{-L}B_{n}$, it holds
either that $T^{k}\eta\in E_{n+1}$ for all $k\in(L-M_{n},L]$, or
that $T^{k}\eta\notin E_{n+1}$ for all $k\in(L-M_{n},L]$ (since
in this regime, $T^{k}\eta=T^{-i}x$ for some $x\in B_{n}$, $i\in[0,M_{n})$).
It follows, therefore, that either $\ell^{\prime}=\ell^{\prime\prime}$,
or $\ell^{\prime}=\ell^{\prime\prime}+M_{n}$. 

Note that $A_{N^{\prime}}\mathbf{1}_{En+1}(\eta)=\ell^{\prime}/N^{\prime}$,
and similarly $A_{N^{\prime\prime}}\mathbf{1}_{A_{n+1}}(\eta)=\ell^{\prime\prime}/N^{\prime\prime}$.
First, suppose that either $\ell^{\prime\prime}/N^{\prime\prime}\geq3/4$
or $\ell^{\prime\prime}/N^{\prime\prime}\leq1/4$. In either case,
we have that 
\[
\alpha_{N^{\prime\prime}}^{-1}\left\vert A_{N^{\prime\prime}}\mathbf{1}_{E_{n+1}}(\eta)-\frac{1}{2}\right\vert \geq\alpha_{M_{n}}^{-1}\left\vert \ell^{\prime\prime}/N^{\prime\prime}-\frac{1}{2}\right\vert >16np_{n}\cdot\frac{1}{4}>4n
\]
where we have used the fact that $N^{\prime\prime}>M_{n}$ and $(\alpha_{n})$
is decreasing. 

Otherwise, $\ell^{\prime\prime}/N^{\prime\prime}\in(1/4,3/4)$. If
$\ell^{\prime}=\ell^{\prime\prime}+M_{n}$, then 
\begin{align*}
\frac{\ell^{\prime}}{N^{\prime}}-\frac{\ell^{\prime\prime}}{N^{\prime\prime}} & =\frac{\ell^{\prime\prime}+M_{n}}{N^{\prime}}-\frac{\ell^{\prime\prime}}{N^{\prime\prime}}\\
 & =\frac{(\ell^{\prime\prime}+M_{n})N^{\prime\prime}-\ell^{\prime\prime}(N^{\prime\prime}+M_{n})}{N^{\prime}N^{\prime\prime}}\\
 & =\frac{M_{n}}{N^{\prime}}\cdot\frac{N^{\prime\prime}-\ell^{\prime\prime}}{N^{\prime\prime}}\\
\mbox{(since }N^{\prime}\leq N_{n+1}) & >\frac{M_{n}}{N_{n+1}}\cdot\frac{1}{4}\\
 & =p_{n}^{-1}\cdot\frac{1}{4}\\
 & >4n\alpha_{M_{n}}.
\end{align*}
Whereas if $\ell^{\prime}=\ell^{\prime\prime}$, then 
\begin{align*}
\frac{\ell^{\prime}}{N^{\prime}}-\frac{\ell^{\prime\prime}}{N^{\prime\prime}} & =\frac{\ell^{\prime\prime}N^{\prime\prime}-\ell^{\prime\prime}(N^{\prime\prime}+M_{n})}{(N^{\prime})(N^{\prime\prime})}=\frac{\ell^{\prime\prime}}{N^{\prime\prime}}\cdot\frac{M_{n}}{N^{\prime}}\geq\frac{1}{4}\cdot\frac{M_{n}}{N_{n+1}}=\frac{1}{4}p_{n}^{-1}>4n\alpha_{M_{n}}.
\end{align*}
Therefore, in either case, we have that 
\[
\alpha_{M_{n}}^{-1}\left\vert A_{N^{\prime}}\mathbf{1}_{E_{n+1}}-A_{N^{\prime\prime}}\mathbf{1}_{E_{n+1}}\right|>4n
\]
so by the triangle inequality, 
\[
\max\left\{ \alpha_{M_{n}}^{-1}\left\vert A_{N^{\prime}}\mathbf{1}_{E_{n+1}}-\frac{1}{2}\right|,\alpha_{M_{n}}^{-1}\left\vert A_{N^{\prime\prime}}\mathbf{1}_{E_{n+1}}-\frac{1}{2}\right|\right\} >2n
\]
and therefore, since $(\alpha_{n})$ is decreasing, that 
\[
\max\left\{ \alpha_{N^{\prime}}^{-1}\left\vert A_{N^{\prime}}\mathbf{1}_{E_{n+1}}-\frac{1}{2}\right|,\alpha_{N^{\prime\prime}}^{-1}\left\vert A_{N^{\prime\prime}}\mathbf{1}_{E_{n+1}}-\frac{1}{2}\right|\right\} >2n.
\]

Thus, regardless of the value of $\ell^{\prime\prime}/N^{\prime\prime}$,
we have that 
\[
\sup_{M_{n}<N\leq N_{n+1}}\alpha_{N}^{-1}\left\vert A_{N}\mathbf{1}_{E_{n+1}}-\frac{1}{2}\right|>2n.
\]

Now, quantifying over all $L\in[2M_{n},N_{n+1})$, we see that the
preceding argument is valid on the set $\bigsqcup_{L=2M_{n}}^{N_{n+1}-1}T^{-L}B_{n}$.
This corresponds to the entire Rokhlin tower except the initial segment
$\bigsqcup_{i=0}^{2M_{n}-1}T^{-i}B_{n}$, and therefore has measure
at least 
\begin{align*}
1-\varepsilon_{n}/4-2M_{n}\mu(B_{n}) & \geq1-\varepsilon_{n}/4-(2M_{n}/N_{n+1})\\
 & =1-\varepsilon_{n}/4-2p_{n}^{-1}\\
 & >1-3\varepsilon_{n}/4.
\end{align*}
In order to replace $\mathbf{1}_{E_{n+1}}$ with $\mathbf{1}_{E}$,
let's estimate $\mu(E_{n+1}\Delta E)$ a second time, this time using
the other upper bound on $\varepsilon_{n}$: 
\[
\mu(E_{n+1}\Delta E)\leq\sum_{i=n+1}^{\infty}\mu(C_{i})<\frac{1}{2}\sum_{i=n+1}^{\infty}\varepsilon_{i}\leq\frac{1}{2}\sum_{i=n+1}^{\infty}\frac{1}{N_{i}2^{i}}<\frac{1}{2^{n+1}N_{n+1}}
\]
(where we have merely used the fact that $N_{i}<N_{i+1}$).

It follows, therefore, that 
\[
\sup_{M_{n}<N\leq N_{n+1}}\alpha_{N}^{-1}\left\vert A_{N}\mathbf{1}_{E}-\frac{1}{2}\right|>2n
\]
holds on a set of measure at least $1-3\varepsilon_{n}/4-2^{-n}$.
Obviously this error term goes to zero as $n$ goes to $\infty$;
thus, we finally conclude that 
\[
\limsup_{N\rightarrow\infty}\alpha_{N}^{-1}\left\vert A_{N}\mathbf{1}_{E}-\frac{1}{2}\right|=\infty\quad\mbox{almost surely.}
\]
\end{proof}
\begin{rem*}
It is also possible to deduce the lack of a rate of convergence in
this setting from a more abstract argument which exploits the fact
that $\mathbb{Z}$ is amenable. In fact we will do this in Theorem
\ref{thm:Amenable-rate}.

One might ask what extra assumptions are needed to get a rate of convergence
for ergodic averages. What the previous proof shows is that it does
not suffice put stronger assumptions than ergodicity on the transformation
(say, that $T$ is mixing). Rather, the problem stems from the fact
that the class of $L^{2}$ functions is ``too big'' \textemdash{}
if, for instance, we work on a subspace of $L^{2}$ which does not
contain indicator functions (!) then the preceding proof breaks down.
For instance, it is possible, for a number of special dynamical systems
\cite{Chernov:2008}, to prove a uniform \emph{exponential decay of
correlations} (which in turn gives a rate of convergence of ergodic
averages) for the class of $L^{2}$ functions which are Hölder continuous
and have upper bounded Hölder seminorm $K$, for some prespecified
constant $K$.
\end{rem*}
An analogous negative result also holds concerning whether the rate
of convergence of a single ergodic average is computable.
\begin{thm}
There exists a measurable, computable subset $E$ of $[0,1]$ and
a computable measure-preserving transformation $T$ on $[0,1]$ such
that there is no computable bound on the rate of convergence of $A_{n}\mathbf{1}_{E}$,
either in $L^{p}$ or pointwise almost surely. 
\end{thm}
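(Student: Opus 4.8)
The plan is to follow the construction in the proof of Theorem~\ref{thm:Krengel} essentially verbatim, but with two changes: replace the arbitrary invertible ergodic transformation by one \emph{fixed, computable} example for which the Rokhlin towers are available computably, and replace ``defeat one prescribed sequence $(\alpha_N)$'' by ``diagonalize against every partial computable function that could serve as a rate of convergence''. For $T$ I would take the dyadic odometer (adding machine) on $[0,1]\cong\{0,1\}^{\mathbb{N}}$ with Lebesgue measure: it is computable, invertible, ergodic and aperiodic, and the cylinders $B_m=\{x:x_1=\cdots=x_m=0\}$ give Rokhlin towers $\{T^{-k}B_m:0\le k<2^m\}$ of height $2^m$ with \emph{zero} error term, all of whose pieces are dyadic intervals. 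For such a $T$, any measurable $E$ with $\mu(E)=\tfrac12$ automatically has $A_N\mathbf{1}_E\to\tfrac12$ in every $L^p$ (mean ergodic theorem) and $\mu$-a.e.\ (Birkhoff); the entire content of the theorem is to build a \emph{computable} such $E$ for which this convergence admits no computable rate.

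First I would fix an enumeration $\varphi_0,\varphi_1,\dots$ of the partial computable functions $\mathbb{Q}^{+}\to\mathbb{N}$ and attach to each $e$ the requirement $R_e$: ``$\varphi_e$ is not a rate of convergence for $A_N\mathbf{1}_E$''. Build $E=\lim_n E_n$, each $E_n$ a finite union of dyadic intervals with $\mu(E_n)=\tfrac12$, by a dovetailed construction: at stage $n$, for the least unsatisfied $e\le n$, simulate $\varphi_e$ for $n$ steps on its designated argument; if that computation halts, \emph{act for $R_e$} by performing one measure-preserving Krengel modification of $E_n$, exactly as in the proof of Theorem~\ref{thm:Krengel}, installed in a fresh and sufficiently fine Rokhlin sub-tower (of height $2^m$ with $m$, and the base time $M$, chosen larger than the output of $\varphi_e$ and than everything used so far) so that on a sub-tower $D$ one gets $\bigl|A_M\mathbf{1}_{E_{n+1}}-\tfrac12\bigr|\ge\tfrac14$, together with the accompanying pointwise statement over a range of times as in that proof. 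The key bookkeeping requirement is that the modifying set $C_n$ at stage $n$ satisfy $\mu(C_n)\le 2^{-n}$; then $\mu(E_n\triangle E_{n+1})\le 2^{-n}$, so $E=\lim_n E_n$ exists with $\mu(E_n\triangle E)\le 2^{-n+1}$, a \emph{computable} modulus, whence $E$ is a computable subset of $[0,1]$, while each $E_n$ comes from a purely finitary, effective procedure and $\mu(E)=\tfrac12$ is preserved.

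For the verification, suppose $g=\varphi_e$ is total. Then $R_e$ is acted on at some stage, producing a bad block at a time $M\ge g(\text{its argument})$ with $\bigl|A_M\mathbf{1}_{E_{n+1}}-\tfrac12\bigr|\ge\tfrac14$ on a set $D$ of positive measure. As in the proof of Theorem~\ref{thm:Krengel} — using that all \emph{subsequent} modifications have total measure $<2^{-n}$ and sit in still finer towers, so that on all but a null part of $D$ the averages $A_N\mathbf{1}_E$ for $N\le M$ coincide with $A_N\mathbf{1}_{E_{n+1}}$ — one concludes that $A_M\mathbf{1}_E$ still deviates from $\tfrac12$ by at least, say, $\tfrac18$ on a positive-measure set at the time $M\ge g(\cdot)$, which refutes $g$ as a pointwise-a.s.\ rate; and $\Vert A_M\mathbf{1}_E-\tfrac12\Vert_p\ge\tfrac18\mu(D)^{1/p}>0$ there, which (after suitably coordinating the designated arguments and the sizes $\mu(D)$) likewise refutes $g$ as an $L^p$-rate — and since $\Vert\cdot\Vert_p\ge\Vert\cdot\Vert_1$ it is enough to do this for $p=1$. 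Finally, since $\sum_n\mu(C_n)<\infty$, Borel--Cantelli gives that $\mu$-a.e.\ $x$ lies in only finitely many bad blocks, consistent with (and re-proving) a.e.\ convergence.

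The step I expect to be the main obstacle is reconciling the computability of $E$ with the diagonalization. The naive strategy ``wait until $\varphi_e$ converges, then act'' makes the construction only $\Delta^0_2$, so $E$ only limit-computable; the requirement $\mu(C_n)\le 2^{-n}$ is precisely what forces $E_n\to E$ at a computable rate irrespective of \emph{when} the various $\varphi_e$ halt. Making such small modifications still do their job is immediate for the pointwise statement, where a deviation of fixed size $\tfrac14$ costs no minimum measure; but for the $L^p$ statement it forces one to refute each $\varphi_e$ at an error level that shrinks with the stage at which $R_e$ acts and is therefore not known in advance, and one must ensure the block is nonetheless installed past $\varphi_e$ evaluated at \emph{that} error. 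Arranging this coordination cleanly — choice of error level, size of modification, and the ``far enough out'' clause all depending on one another through the (unknown) running time of $\varphi_e$ — is the delicate part of the argument, and is where I would expect to spend most of the effort.
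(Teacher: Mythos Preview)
The paper does not give its own proof of this statement: it simply cites Theorem~5.1 of Avigad, Gerhardy, and Towsner, \emph{Local stability of ergodic averages}. So there is no in-paper argument to compare your proposal against; your sketch is an attempt to reconstruct what that cited proof does.

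For what it is worth, your outline is along the right lines and is close in spirit to the argument in the cited source: a fixed computable ergodic system with explicit Rokhlin towers (the odometer is a fine choice), a Krengel-style measure-preserving rearrangement inside those towers to create bad times, and a priority/diagonalization scheme against an enumeration of partial computable candidate rates, with the stagewise modification budget $\mu(C_n)\le 2^{-n}$ forcing $E_n\to E$ at a computable rate so that $E$ is genuinely computable. You have also correctly isolated the one real difficulty: for the $L^p$ refutation, the deviation you can guarantee at the stage where $R_e$ acts has size comparable to $\mu(D)$, which is itself bounded by the remaining modification budget and hence by the (a priori unknown) stage at which $\varphi_e$ halts; so the input at which you must evaluate $\varphi_e$ depends on when $\varphi_e$ halts. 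This circularity is resolved in the cited proof by committing in advance, for each $e$, to a fixed rational error level $\varepsilon_e$ and a fixed target measure for $D$, and then dovetailing the simulation of $\varphi_e(\varepsilon_e)$ so that one only acts for $R_e$ once that specific computation halts; the point is that the \emph{error level} is decided before any simulation, while only the \emph{time} $M$ at which the bad block is installed depends on the halting stage. With that bookkeeping your plan goes through.
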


\begin{proof}
See Theorem 5.1 of the paper by Avigad et al. \cite{avigad2010local}.
\end{proof}
However, it is worth clarifying a potential point of confusion regarding
the statement of Theorem \ref{thm:Krengel}. The proof of this result
indicates that, given a specific rate of convergence $(\alpha_{n})$,
it is possible to find a function $f$ (in fact an indicator function)
such that $\int f=\frac{1}{2}$, and $(A_{n}f)$ converges to $\frac{1}{2}$
at a rate even slower than $(\alpha_{n})$. This does \emph{not} necessarily
mean that, having selected this $f$ it is impossible to compute the
rate of convergence of $(A_{n}f)$. In fact, this rate of convergence
is computable (given $f$ and $T$) whenever we also know the norm
of the limit of $(A_{n}f)$, so in particular whenever $T$ is ergodic
and $f$ is a function with known integral, as in the proof of Theorem
\ref{thm:Krengel}. 
\begin{thm}
Let $T$ be a nonexpansive operator on a separable Hilbert space (e.g.
a Koopman operator on $L^{2}(X,\mu)$ with $(X,\mu)$ separable).
Let $f^{*}$ denote the limit of $(A_{n}f)$. Then a bound on the
rate of convergence of $(A_{n}f)$, can be computed from $f$, $T$,
and $\Vert f^{*}\Vert$. 
\end{thm}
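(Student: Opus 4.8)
The plan is to use the Hilbert space geometry to make the error $\|A_{n}f-f^{*}\|$ itself a \emph{computable} function of $n$ (even though $f^{*}$ need not be computable), and then to upgrade this into a genuine rate of convergence by a short averaging estimate.

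By the (classical, abstract) mean ergodic theorem of von Neumann, $f^{*}=Pf$, where $P$ is the orthogonal projection onto the closed subspace $\mathrm{Fix}(T)=\{h:Th=h\}$; put $g:=f-f^{*}=(I-P)f\perp\mathrm{Fix}(T)$. The first observation is the Hilbert-space-specific fact that $\mathrm{Fix}(T)=\mathrm{Fix}(T^{*})$ for any contraction $T$: if $Th=h$ then $\langle T^{*}h,h\rangle=\langle h,Th\rangle=\|h\|^{2}$, so $\|T^{*}h-h\|^{2}=\|T^{*}h\|^{2}-2\,\mathrm{Re}\langle T^{*}h,h\rangle+\|h\|^{2}\leq\|h\|^{2}-2\|h\|^{2}+\|h\|^{2}=0$, and symmetrically. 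Hence, using $T^{*}f^{*}=f^{*}$ and $g\perp f^{*}$,
\[
\langle A_{n}f,f^{*}\rangle=\tfrac{1}{n}\textstyle\sum_{i=0}^{n-1}\langle f,(T^{*})^{i}f^{*}\rangle=\langle f,f^{*}\rangle=\langle f^{*}+g,f^{*}\rangle=\|f^{*}\|^{2},
\]
so $A_{n}f-f^{*}\perp f^{*}$ and the Pythagorean theorem gives $\|A_{n}f-f^{*}\|^{2}=\|A_{n}f\|^{2}-\|f^{*}\|^{2}$. The right-hand side is computable from $f$, $T$ and $\|f^{*}\|$: the vector $A_{n}f=\tfrac{1}{n}\sum_{i=0}^{n-1}T^{i}f$ is computable from $f$ and $T$, hence so is $\|A_{n}f\|$, while $\|f^{*}\|$ is part of the input. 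Thus $e_{n}:=\|A_{n}f-f^{*}\|=(\|A_{n}f\|^{2}-\|f^{*}\|^{2})^{1/2}$ is a computable function of $n$.

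Next comes the obstacle. By the mean ergodic theorem $e_{n}\to0$, so for each $\varepsilon>0$ we can effectively find an index with $e_{n}$ small; but $(e_{n})$ need not be monotone — e.g. if $Tg=-g$ then $e_{n}=\|g\|/n$ for odd $n$ and $e_{n}=0$ for even $n$ — so a single small value does not control the tail. To repair this I would prove a stability estimate: grouping the sum defining $A_{kn}g$ into $k$ consecutive blocks of length $n$ gives $A_{kn}g=\tfrac{1}{k}\sum_{j=0}^{k-1}T^{jn}A_{n}g$, whence $\|A_{kn}g\|\leq\|A_{n}g\|$ since $\|T\|\leq1$; and for arbitrary $m\geq n$, writing $m=kn+r$ with $0\leq r<n$, one has $A_{m}f-f^{*}=\tfrac{kn}{m}(A_{kn}f-f^{*})+\tfrac{1}{m}\sum_{i=kn}^{m-1}T^{i}g$, so that
\[
\|A_{m}f-f^{*}\|\leq\|A_{n}f-f^{*}\|+\tfrac{r}{m}\|f-f^{*}\|\leq\|A_{n}f-f^{*}\|+\tfrac{\|f-f^{*}\|}{n}\qquad(m\geq n^{2}),
\]
using $\tfrac{kn}{m}\leq1$ and $\tfrac{r}{m}<\tfrac{n}{n^{2}}$; note $\|f-f^{*}\|=(\|f\|^{2}-\|f^{*}\|^{2})^{1/2}=e_{1}$ is again computable from the data.

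The bound is then immediate. Given $\varepsilon>0$, set $c:=\|f-f^{*}\|$, let $n^{*}$ be the least integer with $n^{*}\geq 2c/\varepsilon$ and $e_{n^{*}}<\varepsilon/2$ (the search halts because $e_{n}\to0$), and define $r(\varepsilon):=(n^{*})^{2}$; then for every $m\geq r(\varepsilon)$ the stability estimate gives $\|A_{m}f-f^{*}\|\leq e_{n^{*}}+c/n^{*}<\varepsilon/2+\varepsilon/2=\varepsilon$. Each step is effective in $f$, $T$, $\|f^{*}\|$ and $\varepsilon$, with elements of the separable Hilbert space and the operator $T$ presented in the usual way of computable analysis, so $r$ is the desired computable bound on the rate of convergence. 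I expect the only non-routine point — the main obstacle — to be exactly this: recognizing that the obvious pointwise-in-$n$ computation of $e_{n}$ is by itself insufficient (because of the non-monotonicity just noted) and must be combined with the block-averaging inequality to convert one good index into a uniform tail bound.
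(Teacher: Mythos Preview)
Your argument is correct and takes a genuinely different route from the paper. The paper (following Avigad--Gerhardy--Towsner) works via the Riesz-style coboundary decomposition: it projects $f$ onto the span of $\{T^{i}f-T^{i+1}f:i\leq i_{0}\}$ to get $g_{i_{0}}=u_{i_{0}}-Tu_{i_{0}}$, uses $\|g\|^{2}=\|f\|^{2}-\|f^{*}\|^{2}$ to detect when $\|g-g_{i_{0}}\|$ is small, and then applies the telescoping estimate $\|A_{n}(u-Tu)\|\leq 2\|u\|/n$ to bound the tail in terms of $\|u_{i_{0}}\|$. Your approach instead exploits $\mathrm{Fix}(T)=\mathrm{Fix}(T^{*})$ to obtain the exact identity $\|A_{n}f-f^{*}\|^{2}=\|A_{n}f\|^{2}-\|f^{*}\|^{2}$, making the error itself computable at every $n$, and then handles the non-monotonicity via the block-averaging inequality $\|A_{kn}g\|\leq\|A_{n}g\|$ together with a remainder term. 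Both proofs hinge on an unbounded search that terminates only because the mean ergodic theorem holds; what differs is how one good index is promoted to a tail bound. Your route is arguably more elementary (no explicit coboundary representatives $u_{i}$ are needed), while the paper's route is more portable to settings where the Pythagorean identity $A_{n}f-f^{*}\perp f^{*}$ is unavailable.
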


\begin{proof}
This is Theorem 5.2 of Avigad et al. \cite{avigad2010local}, but
we sketch the argument. Given a vector $f$ in the Hilbert space,
we know that $f=f^{*}+g$ where $f^{*}$ is the projection of $f$
onto the $T$-invariant subspace. Likewise, it is possible to approximate
$g$ with the sequence $(g_{i})$, where $g_{i}$ is the projection
of $f$ onto the subspace spanned by $\{f-Tf,Tf-T^{2}f,\ldots,T^{i}f-T^{i+1}f\}$.
It follows that $g_{i}\rightarrow g$ and moreover $\Vert g_{i}\Vert$
is nondecreasing. 

A short computation (Lemma 2.5 in Avigad et al.) shows that $\Vert g-g_{i}\Vert\leq\sqrt{2(\Vert g\Vert-\Vert g_{i}\Vert)\Vert f\Vert}$.
In turn, $\Vert g\Vert^{2}=\Vert f\Vert^{2}-\Vert f^{*}\Vert^{2}$
simply from orthogonality. Lastly, it can be shown (see discussion
preceding Lemma 2.3 in Avigad et al.) that $g_{i}$ can be written
in the form $u_{i}-Tu_{i}$, where $u_{i}$ is given explicitly in
terms of $g_{i}$, $T$, and $f$. Therefore, using the telescoping
estimate 
\[
A_{n}(u-Tu)=\frac{1}{n}(u-T^{n}u);\quad\Vert A_{n}(u-Tu)\Vert\leq\frac{2}{n}\Vert u\Vert
\]
and the estimate 
\begin{align*}
\Vert A_{n}f-A_{m}f\Vert & =\Vert A_{n}g-A_{m}g\Vert\\
 & \leq\Vert A_{n}g_{i}-A_{m}g_{i}\Vert+\Vert A_{n}(g-g_{i})\Vert+\Vert A_{m}(g-g_{i})\Vert\\
 & \leq\Vert A_{n}g_{i}\Vert+\Vert A_{m}g_{i}\Vert+2\Vert g-g_{i}\Vert
\end{align*}
we can then bound the rate of convergence of $(A_{n}f)$ in the following
manner. First, search for the least $i$ such that $\Vert g-g_{i}\Vert<\varepsilon/4$.
Then, compute the $u_{i}$ associated to this $g_{i}$, and compute
$\Vert u_{i}\Vert$. Pick $m$ large enough that $2\Vert u_{i}\Vert/m<\varepsilon/4$
(and thus $2\Vert u_{i}\Vert/n<\varepsilon/4$ for all $n\geq m$).
It follows that for all $m\geq n$, $\Vert A_{n}f-A_{m}f\Vert<\varepsilon$. 
\end{proof}
\begin{rem*}
Obviously the preceding result is an example of an \emph{effective}
convergence theorem in ergodic theory, but not a \emph{uniform} one. 
\end{rem*}
It is the absence of a uniform rate of convergence for the von Neumann
and Birkhoff ergodic theorems that has motivated the investigation
of weaker forms of uniform convergence, including bounds on the number
of fluctuations and bounds on the rate of metastability. Let us briefly
mention some existing results in this direction.

In Avigad et al. \cite{avigad2010local}, the authors give an explicit
bound on the rate of metastability for $(A_{n}f)$ which depends only
on $\Vert f\Vert/\varepsilon$ in the setting of an action of a single
nonexpansive transformation on a Hilbert space. A short but inexplicit
proof using ultraproduct methods was subsequently given by Avigad
and Iovino \cite{avigad2013ultraproducts}. More generally, the result
of Avigad et al. was subsequently generalized to uniformly convex
Banach spaces by Kohlenbach and Leu\c{s}tean \cite{kohlenbach2009quantitative}.
In turn, this result was strengthened by Avigad and Rute \cite{avigad2015oscillation},
who gave an explicit bound on the number of fluctuations for $(A_{n}f)$,
with $T$ a nonexpansive operator on a uniformly convex Banach space. 

Before discussing existing results in the pointwise a.s. setting,
let us mention the relationship between bounds on the number of fluctuations
and \emph{upcrossing inequalities}. Given an interval $(\alpha,\beta)$
in $\mathbb{R}$, and a real sequence $(x_{n})$, an \emph{upcrossing
}of $(\alpha,\beta)$ corresponds to a pair of indices $n_{i}<n_{i+1}$
such that $x_{n_{i}}\leq\alpha$ and $x_{n_{i+1}}\geq\beta$. We can
then consider finite subsequences of $(x_{n})$ such that for every
odd $i$, $x_{n_{i}}\leq\alpha$ and $x_{n_{i+1}}\geq\beta$. Then
the maximum length of such a finite subsequence, divided by two, gives
the \emph{number of upcrossings} of the interval $(\alpha,\beta)$
in the sequence $(x_{n})$. For a single sequence $(x_{n})$, we can
ask whether $(x_{n})$ has an explicit/computable upper bound on the
number of upcrossings of some interval $(\alpha,\beta)$; for a family
of sequences, we can ask whether the family has a uniform upper bound
on the number of upcrossings of $(\alpha,\beta)$. If so, the result
is known as an upcrossing inequality. (It is also possible to define
\emph{downcrossings} in the same fashion.)

It is clear that if a sequence has at most $k$ $\varepsilon$-fluctuations,
then for every interval $(\alpha,\beta)$ with $\beta-\alpha\geq\varepsilon$,
there can be at most $k/2$ upcrossings of $(\alpha,\beta)$. Conversely,
if we know that a sequence is \emph{bounded} in some interval $[a,b]$,
it is possible do deduce a bound on the number of $\varepsilon$-fluctuations
by, for instance, partitioning $[a,b]$ into sub-intervals $(\alpha_{i},\beta_{i})$
such that $\beta_{i}-\alpha_{i}<\varepsilon/2$, and observing that
every $\varepsilon$-fluctuation must be either an upcrossing or a
downcrossing with respect to some sub-interval. 

The theorem in analysis which most famously has a natural statement
in terms of an upcrossing inequality is of course the martingale convergence
theorem, which can be stated in the following form. Let $(M_{n})$
be a real-valued martingale adapted to some filtrated probability
space $(\Omega,(\mathcal{F}_{n}),\mathbb{P})$ which is uniformly
bounded in $L^{1}$ (i.e. $\sup_{n}\mathbb{E}[|M_{n}|]<\infty$),
and define 
\[
U_{\alpha,\beta}(\omega):=\mbox{the number of upcrossings of }(\alpha,\beta)\mbox{ for }M_{n}(\omega).
\]
Then, one version of the Doob upcrossing inequality says that 
\[
\mathbb{E}[U_{\alpha,\beta}]\leq\frac{\sup_{n}\mathbb{E}[|M_{n}|]+\alpha}{\beta-\alpha}.
\]
Qualitatively, this inequality implies directly that $\mathbb{P}(U_{\alpha,\beta}(\omega)=\infty)=0$
for every $\alpha<\beta$ (which in turn implies that $(M_{n})$ converges
almost surely), but quantitatively it gives us information about the
distribution of the number of upcrossings in terms of $\alpha$ and
$\beta$. By Markov's inequality, we know that $k\cdot\mathbb{P}(\{\omega\mid U_{\alpha,\beta}\geq k\})\leq\mathbb{E}[U_{\alpha,\beta}]$,
which tells us that 
\[
\mathbb{P}(\{\omega\mid U_{\alpha,\beta}\geq k\})\leq\frac{\sup_{n}\mathbb{E}[|M_{n}|]+\alpha}{k(\beta-\alpha)}.
\]
In fact, when combined with the Doob maximal inequality (which says
that off a set of small measure, we can uniformly bound $M_{n}(\omega)$),
this previous bound can be used to deduce a bound on the measure of
the set of points $\omega$ for which $M_{n}(\omega)$ has at least
$k$ $\varepsilon$-fluctuations. Moreover, it can be shown that there
is no uniform rate of convergence in the Martingale convergence theorem,
given only the same initial data as is required by the Doob upcrossing
inequality.\footnote{One easy way to see this is to take a sequence of conditional expectations
$\mathbb{E}(X\vert\mathcal{F}_{n})$, and replace $(\mathcal{F}_{n})$
with a ``slowed down'' filtration like say$(\mathcal{F}_{\lfloor\log n\rfloor})$.
Then the martingale $\mathbb{E}(X\vert\mathcal{F}_{\lfloor\log n\rfloor})$
is still adapted to $(\mathcal{F}_{n})$, since it's always the case
that $\mathcal{F}_{\lfloor\log n\rfloor}\subset\mathcal{F}_{n}$,
but the rate of convergence is exponentially slower than that of $\mathbb{E}(X\vert\mathcal{F}_{n})$. } So this is another example of a theorem which carries uniform convergence
information weaker than a uniform rate of convergence. 

Ending our digression into probability theory, ergodic theoretic statements
of this kind \textemdash{} namely, inequalities which bound the measure
of the set of points in $X$ for which $A_{n}f(x)$ has at least $k$
upcrossings, and thereby bound the measure of the set of points in
$X$ for which $A_{n}f(x)$ has at least $k$ $\varepsilon$-fluctuations,
by way of the maximal ergodic theorem \textemdash{} date back to Bishop's
work on constructive analysis. Given $T\curvearrowright(X,\mu)$ and
$f\in L^{1}(X)$, and letting $E_{\alpha,\beta}(x)$ denote the number
of upcrossings of the interval $(\alpha,\beta)$ of the sequence $A_{n}f(x)$,
Bishop showed \cite{bishop1968constructive} that 
\[
\mu(\{x\mid E_{\alpha,\beta}(x)\geq k\})\leq\frac{\Vert f\Vert_{1}}{k(\beta-\alpha)}.
\]
(The unmistakeable similarity to the Doob upcrossing inequality is
not an accident; Bishop's proof proceeds by proving an abstract upcrossing
inequality which jointly generalizes both the ergodic upcrossing inequality
above, and the Doob upcrossing inequality.) More recently, a similar
upcrossing inequality for $\mathbb{Z}^{d}$ actions where the summation
in the average $A_{n}f$ is taken over symmetric $d$-dimensional
boxes of radius $n$was proved by Kalikow and Weiss \cite{kalikow1999fluctuations}. 

Finally, it is worth mentioning an example of an upcrossing inequality
for a convergence theorem in ergodic theory other than the mean and
pointwise ergodic theorems: recently, Hochman gave an upcrossing inequality
for the Shannon-McMillan-Breiman theorem for $T\curvearrowright(X,\mu)$
\cite{hochman2009upcrossing}.

\chapter{Amenable Groups}

A comprehensive introduction to amenable groups would dwarf the rest
of this document. At the same time, the main result of the following
chapter uses essentially none of the theory of amenability except
for the definition of a Følner sequence. 

What, then, is the purpose of this chapter, if it is neither a self-contained
exposition of the theory of amenable groups, nor a collection of prerequisite
material? Rather, this chapter primarily serves to contextualize the
results of the following chapter. In section 1, we introduce the notion
of a Følner sequence, discuss how it relates to the classical definition
of amenability in terms of finitely additive measures, and give some
illustration of the variety of the class of discrete amenable groups.
In section 2, we discuss briefly how the work of section 1 can be
adapted to the setting of \emph{locally compact} amenable groups.
In section 3, we address the extent to which the ergodic theory of
amenable groups can be viewed as a natural extension of classical
ergodic theory. 

A reader who is already intimately acquainted with geometric group
theory could safely skip the entirety of this chapter, with the notable
exception of Theorem \ref{thm:Amenable-rate} in the final section,
where it is shown that the mean ergodic theorem for amenable groups
has no uniform rate of convergence. This result, though a folk theorem,
is not especially well known, and serves as important motivation for
the thesis as a whole. 

Theorem \ref{thm:Amenable-rate} aside, much of the material in this
chapter is quite standard, and can be found in the books by de la
Harpe \cite{de2000topics}, Dru\c{t}u and Kapovich \cite{drutu2018geometric},
and Einsiedler and Ward \cite{einsiedler2010ergodic}, as well as
the monograph of Anantharaman et al. \cite{anantharaman:hal-00464094}
and the online notes of Juschenko \cite{juschenko2015amenability}
and Tao \cite{tao2009amenable}.

\section{Around Amenability}

The following will serve as our definition of amenability. 
\begin{defn}
A discrete group $G$ has the \emph{Følner property} if, for every
finite set $K$, and every $\varepsilon$, there exists a finite set
$F$ such that for all $k\in K$, 
\[
\frac{|F\Delta kF|}{|F|}<\varepsilon.
\]
Any group with the Følner property is said to be \emph{amenable}. 
\end{defn}

\begin{prop}
If $G$ is countable the Følner property is equivalent to the existence
of a Følner \emph{sequence} $(F_{n})$ for which $|F_{n}\Delta gF_{n}|/|F_{n}|\rightarrow0$
for every $g\in G$. 
\end{prop}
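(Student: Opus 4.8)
The plan is to prove the two directions separately. The direction ($\Leftarrow$) is immediate: if a F\o lner sequence $(F_n)$ exists with $|F_n \Delta gF_n|/|F_n| \to 0$ for every $g \in G$, then given any finite $K \subseteq G$ and $\varepsilon > 0$, for each $k \in K$ choose $n_k$ so large that $|F_n \Delta kF_n|/|F_n| < \varepsilon$ for all $n \geq n_k$; taking $n = \max_{k \in K} n_k$ and $F = F_n$ gives a single finite set witnessing the F\o lner property for $(K,\varepsilon)$. (The only thing being used here is that $K$ is finite, so that the maximum of finitely many indices exists.)

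For the direction ($\Rightarrow$), I would use countability of $G$. Enumerate $G = \{g_1, g_2, g_3, \ldots\}$ and set $K_n = \{g_1, \ldots, g_n\}$. For each $n$, apply the F\o lner property to the finite set $K_n$ and the tolerance $\varepsilon = 1/n$ to obtain a finite set $F_n$ with $|F_n \Delta g_i F_n|/|F_n| < 1/n$ for all $i \leq n$. I claim this sequence $(F_n)$ works: fix $g \in G$ and $\delta > 0$. Since $g = g_N$ for some $N$, then for every $n \geq \max\{N, \lceil 1/\delta \rceil\}$ we have $g \in K_n$ and $1/n \leq \delta$, hence $|F_n \Delta gF_n|/|F_n| < 1/n \leq \delta$. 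Thus $|F_n \Delta gF_n|/|F_n| \to 0$ for every $g \in G$, as required.

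There is essentially no obstacle here; the one point worth flagging is simply that the ``uniform over a finite set, for all $\varepsilon$'' formulation of the F\o lner property and the ``pointwise in $g$, along a single sequence'' formulation are interchanged precisely because a countable group can be exhausted by an increasing sequence of finite subsets, which is exactly what lets a single diagonal sequence $(F_n)$ simultaneously handle every group element in the limit. If $G$ were uncountable this construction would fail (no countable sequence could see all of $G$), which is why the hypothesis of countability is needed for this equivalence.
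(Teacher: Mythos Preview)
Your proof is correct and follows exactly the same approach as the paper: for ($\Rightarrow$) you enumerate $G$, set $K_n=\{g_1,\dots,g_n\}$, and let $F_n$ witness the F\o lner property for $K_n$ with tolerance $1/n$; for ($\Leftarrow$) you take a maximum over the finitely many indices needed for each element of $K$. The paper's proof is terser but identical in substance.
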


\begin{proof}
$(\Rightarrow$) Let $(g_{n})$ be any enumeration of $G$, and for
every $n$, let $F_{n}$ witness the Følner property for the finite
set $\{g_{1},\ldots,g_{n}\}$ and $\varepsilon=1/n$.

($\Leftarrow$) Given a finite set $F\subset G$ and $\varepsilon>0$,
simply choose $N$ large enough so that for all $n\geq N$ and $g\in F$,
$|F_{n}\Delta gF_{n}|/|F_{n}|<\varepsilon$. 
\end{proof}
Følner sequences will turn out to be the most convenient characterisation
of amenability for our purposes. However, the Følner property is \emph{far
}from the only significant characterization of amenability. When von
Neumann, in his paper \emph{Zur allgemeinen Theorie des Masses} \cite{Neumann1929},
introduced the notion of amenability, he defined a group to be amenable
iff it supports a translation invariant finitely additive probability
measure. While we ultimately make no use of the von Neumann characterization
of amenability in the following chapter, it is worth taking a moment
to illustrate why it is equivalent to the Følner characterization.
\begin{thm}
Let $G$ be a countable discrete group. TFAE:

\begin{enumerate}
\item $G$ has a Følner sequence $(F_{n})$. 
\item $G$ admits a left-invariant finitely additive probability measure.
\item $G$ admits a left-invariant finitely additive mean. 
\end{enumerate}
\end{thm}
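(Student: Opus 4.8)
The plan is to run the cycle $(1)\Rightarrow(3)\Rightarrow(2)\Rightarrow(1)$, treating $(2)\Leftrightarrow(3)$ along the way as essentially a change of language. On a discrete group, a finitely additive probability measure $\mu$ defined on all subsets of $G$ carries the same data as a positive, normalized linear functional $m$ on $\ell^{\infty}(G)$ --- a \emph{mean} --- via $m(\mathbf{1}_{A})=\mu(A)$ on indicators, extended to simple functions by additivity and to all of $\ell^{\infty}(G)$ by density together with the bound $|m(f)|\leq\Vert f\Vert_{\infty}$; left-invariance of one notion is exactly left-invariance of the other. So it suffices to establish $(1)\Rightarrow(3)$ and $(2)\Rightarrow(1)$.

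For $(1)\Rightarrow(3)$: given a F\o lner sequence $(F_{n})$, define $m_{n}\in\ell^{\infty}(G)^{*}$ by $m_{n}(f)=|F_{n}|^{-1}\sum_{g\in F_{n}}f(g)$. Each $m_{n}$ is a mean, and the set of means is a weak-$*$ closed subset of the unit ball of $\ell^{\infty}(G)^{*}$, hence weak-$*$ compact by Banach--Alaoglu; let $m$ be a weak-$*$ cluster point of $(m_{n})$. For fixed $g\in G$ and $f\in\ell^{\infty}(G)$, writing $L_{g}f(x)=f(g^{-1}x)$, one estimates $|m_{n}(L_{g}f)-m_{n}(f)|\leq\Vert f\Vert_{\infty}\,|g^{-1}F_{n}\Delta F_{n}|/|F_{n}|=\Vert f\Vert_{\infty}\,|F_{n}\Delta gF_{n}|/|F_{n}|\rightarrow0$, using that left translation by $g$ is a bijection. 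Passing to the cluster point gives $m(L_{g}f)=m(f)$, so $m$ is a left-invariant mean. (Countability is not needed here beyond what the preceding proposition already used to produce the sequence; a sequence, not a net, suffices.)

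The substantive direction is $(2)\Rightarrow(1)$, for which I would use Day's argument. First, the finitely supported probability measures form a convex subset of $\ell^{1}(G)\subset\ell^{\infty}(G)^{*}$ whose weak-$*$ closure is the entire set of means --- a Hahn--Banach / Goldstine-type fact. Hence a given left-invariant mean $m$ is a weak-$*$ limit of a net $(\mu_{\alpha})$ of finitely supported probability measures, and for each fixed $k\in G$, $\mu_{\alpha}-L_{k}\mu_{\alpha}\rightarrow m-L_{k}m=0$ weak-$*$; restricted to the subspace $\ell^{1}(G)$ this is precisely $\sigma(\ell^{1},\ell^{\infty})$-convergence. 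Now fix a finite $K\subseteq G$ and $\varepsilon>0$. The set $C=\{(\phi-L_{k}\phi)_{k\in K}\,:\,\phi\text{ a finitely supported probability measure}\}$ is a convex subset of the finite direct sum $\bigoplus_{k\in K}\ell^{1}(G)$ that contains $0$ in its weak closure, so by Mazur's lemma $0$ lies in its norm closure: there is a finitely supported probability measure $\phi$ with $\sum_{k\in K}\Vert\phi-L_{k}\phi\Vert_{1}<\varepsilon/2$.

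Finally, I would convert $\phi$ into an almost-invariant finite set by the layer-cake (``F\o lner's trick'') argument: for $t>0$ put $F_{t}=\{g\,:\,\phi(g)>t\}$, a finite set, so that $\int_{0}^{\infty}|F_{t}|\,dt=\Vert\phi\Vert_{1}=1$, while since $kF_{t}=\{g\,:\,(L_{k}\phi)(g)>t\}$ one also gets $\int_{0}^{\infty}|F_{t}\Delta kF_{t}|\,dt=\Vert\phi-L_{k}\phi\Vert_{1}$ for each $k\in K$. Summing over $K$ yields $\int_{0}^{\infty}\sum_{k\in K}|F_{t}\Delta kF_{t}|\,dt<\tfrac{\varepsilon}{2}\int_{0}^{\infty}|F_{t}|\,dt$, so there must exist some $t$ with $F_{t}\neq\emptyset$ and $\sum_{k\in K}|F_{t}\Delta kF_{t}|<\varepsilon|F_{t}|$, hence $|F_{t}\Delta kF_{t}|/|F_{t}|<\varepsilon$ for all $k\in K$. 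Thus $G$ has the F\o lner property, and since $G$ is countable the preceding proposition upgrades this to an honest F\o lner sequence, closing the cycle. I expect the main obstacle to be packaging the first two steps of $(2)\Rightarrow(1)$ cleanly --- the weak-$*$ density of finitely supported probability measures among all means, and the weak-to-norm passage via Mazur --- after which the layer-cake step is a short computation.
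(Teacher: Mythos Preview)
Your proof is correct. The paper runs the cycle in the other direction, $(1)\Rightarrow(2)\Rightarrow(3)\Rightarrow(1)$, and its treatment differs from yours in two places. For $(1)\Rightarrow(2)$ the paper builds the invariant measure by hand as an ultrafilter limit $\mu(B)=\lim_{\mathcal{U}}|B\cap F_{n}|/|F_{n}|$, checking finite additivity and left-invariance termwise; your use of Banach--Alaoglu to take a weak-$*$ cluster point of the averaging functionals $m_{n}$ is the same compactness idea in functional-analytic clothing, and arguably cleaner since positivity and normalization are inherited automatically. For the hard direction the paper simply defers $(3)\Rightarrow(1)$ to Tao's notes, whereas you actually supply Day's argument (weak-$*$ density of finitely supported measures among means, Mazur to pass from weak to norm approximate invariance, then the layer-cake trick to extract a F{\o}lner set). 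So your write-up is more self-contained; the trade-off is that the paper's ultrafilter construction is somewhat more concrete and avoids invoking Banach--Alaoglu and Mazur as black boxes.
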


\begin{proof}
($1\implies2$) 
{} Given any $B\subseteq G$, consider the limiting behaviour of $|B\cap F_{n}|/|F_{n}|$.
We know that termwise, this ratio is at most 1. Thus we can use an
ultrafilter to fix a limit $\lim_{\mathcal{U}}|B\cap F_{n}|/|F_{n}|$.
Explicitly, let $k$ be an integer. Then partition the unit interval
by 
\[
[0,1/k)\cup[1/k,2/k)\cup\ldots\cup[k-2/k,k-1/k)\cup[k-1/k,1].
\]
Then we can partition $\mathbb{N}$ into subsets $A_{i,k}:=\{m\in\mathbb{N}\mid|B\cap F_{m}|/|F_{m}|\in[i/k,i+1/k)\}$
for $i=0,\ldots,k-2$, and $A_{k-1}=\{m\in\mathbb{N}\mid|B\cap F_{m}|/|F_{m}|\in[k-1/k,1]\}$.
Fix an ultrafilter $\mathcal{U}$ on $\mathbb{N}$. For each $k$,
precisely one of the $A_{i,k}$'s can be an element of $\mathcal{U}$. 

Now we restrict our attention to all $k$'s of the form $2^{j}$.
Then we use the ultrafilter $\mathcal{U}$ as an oracle to answer
denumerably many choices among dyadic intervals, defining $\lim_{\mathcal{U}}|B\cap F_{n}|/|F_{n}|$
to be the unique element of the unit interval which is contained in
the $A_{i,2^{j}}$ belonging to $\mathcal{U}$, for each $j$. 

Setting $\mu(B)=\lim_{\mathcal{U}}|B\cap F_{n}|/|F_{n}|$, we claim
that this is a finitely additive probability measure. To see this,
simply observe that termwise, $|\emptyset\cap F_{n}|/|F_{n}|=0$,
and likewise $|G\cap F_{n}|/|F_{n}|$ is always $1$. If $C$ and
$D$ are disjoint, 
\[
|(C\cup D)\cap F_{n}|=|(C\cap F_{n})\cup(D\cap F_{n})|=|C\cap F_{n}|+|D\cap F_{n}|
\]
and hence 
\[
\frac{|(C\cup D)\cap F_{n}|}{|F_{n}|}=\frac{|C\cap F_{n}|}{|F_{n}|}+\frac{|D\cap F_{n}|}{|F_{n}|}.
\]
Since all these statements hold for every term, we have that $\mu(\emptyset)=0$,
$\mu(G)=1$, and $\mu(C\cup D)=\lim_{\mathcal{U}}(|C\cap F_{n}|/|F_{n}|+|D\cap F_{n}|/|F_{n}|)=\mu(C)+\mu(D)$.
It remains to show that $\mu$ is left-invariant. But this is a consequence
of the fact that $(F_{n})$ is a Følner sequence. To see this, compute
that 
\[
\frac{|B\cap F_{n}|}{|F_{n}|}-\frac{|gB\cap F_{n}|}{|F_{n}|}=\frac{|B\cap F_{n}|-|B\cap g^{-1}F_{n}|}{|F_{n}|}
\]
\[
\leq\frac{|(B\cap F_{n})\backslash(B\cap g^{-1}F_{n})|}{|F_{n}|}=\frac{|B\cap(F_{n}\backslash g^{-1}F_{n})|}{|F_{n}|}\leq\frac{|F_{n}\backslash g^{-1}F_{n}|}{|F_{n}|}
\]
\[
\leq\frac{|F_{n}\Delta g^{-1}F_{n}|}{|F_{n}|}\longrightarrow0
\]
In other words, for every $\varepsilon$, it holds for all but finitely
many $n\in\mathbb{N}$ that $|\frac{|B\cap F_{n}|}{|F_{n}|}-\frac{|gB\cap F_{n}|}{|F_{n}|}|<\varepsilon$.
Hence $\lim_{\mathcal{U}}\frac{|B\cap F_{n}|}{|F_{n}|}-\lim_{\mathcal{U}}\frac{|gB\cap F_{n}|}{|F_{n}|}=0$
and $\mu(B)=\mu(gB)$. 

($2\implies3$) Obvious, since a ``finitely additive mean'' is just
another name for an integral which is defined with respect to a finitely
additive probability measure. 

($3\implies1$) See Tao's notes \cite{tao2009amenable}. 
\end{proof}
\begin{rem*}
The previous result shows that amenability may either be viewed as
a combinatorial or measure theoretic/functional analytic phenomenon.
However, the preceding proof does not allow us to \emph{explicitly
construct} a finitely additive probability measure from a Følner sequence.
This is not an accident: there are models of ZF where $\mathbb{Z}$
does not support a translation-invariant finitely additive probability
measure, but showing that $\mathbb{Z}$ supports a Følner sequence
(which we shall do momentarily) requires only basic arithmetic. 
\end{rem*}
Lots of countable discrete groups are amenable. Let's start at the
beginning:
\begin{prop}
Finite groups are amenable.
\end{prop}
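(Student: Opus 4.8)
The plan is to verify the F\o lner property directly with the simplest possible choice of F\o lner set, namely the whole group. Let $G$ be a finite group, let $K\subseteq G$ be any finite subset, and let $\varepsilon>0$. I would take $F=G$. Since left translation $x\mapsto kx$ is a bijection of $G$ onto itself, we have $kG=G$ for every $k\in K$, hence $G\Delta kG=\emptyset$ and therefore $|G\Delta kG|/|G|=0<\varepsilon$.

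Since $K$ and $\varepsilon$ were arbitrary, this shows $G$ has the F\o lner property, so by definition $G$ is amenable. The only point worth stating explicitly is the observation that multiplication by a group element permutes the elements of a finite group, which is immediate from the existence of inverses.

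There is no real obstacle here: the single constant F\o lner set $F=G$ works for every $K$ and every $\varepsilon$. One could equivalently invoke the earlier characterisations (for instance, the normalised counting measure on $G$ is a left-invariant finitely additive probability measure, so $2\Rightarrow1$ of the preceding theorem applies), but verifying the F\o lner property by hand is the most economical route and requires no machinery.
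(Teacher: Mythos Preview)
Your proof is correct and is exactly the approach the paper takes: choose $F=G$ and observe that $kG=G$ for every $k$, so the symmetric difference vanishes. The paper's version is a one-liner to the same effect; your added remarks about bijectivity and the alternative via counting measure are fine but not needed.
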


\begin{proof}
Take $F=G$, and observe that $F\Delta kF=0$. 
\end{proof}
\begin{prop}
\label{prop:Z-amenable}$(\mathbb{Z},+)$ is amenable.
\end{prop}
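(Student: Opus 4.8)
The plan is to exhibit an explicit F\o lner sequence for $(\mathbb{Z},+)$. The natural candidate is the sequence of symmetric intervals $F_n = \{-n, -n+1, \ldots, n-1, n\}$, or equally well the one-sided intervals $F_n = \{0, 1, \ldots, n-1\}$; either works, and I would probably use the one-sided version since the arithmetic is marginally cleaner. By the Proposition characterising the F\o lner property via F\o lner sequences (for countable groups), it suffices to check that $|F_n \Delta (g + F_n)|/|F_n| \to 0$ for every fixed $g \in \mathbb{Z}$.

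First I would fix $g \in \mathbb{Z}$ and compute $g + F_n$ explicitly: with $F_n = \{0,1,\ldots,n-1\}$ we get $g + F_n = \{g, g+1, \ldots, g+n-1\}$. Two intervals of the same length $n$ translated by $g$ overlap in exactly $\max\{n - |g|, 0\}$ points, so their symmetric difference has size $|F_n \Delta (g+F_n)| = 2\min\{|g|, n\}$, which for $n \geq |g|$ is simply $2|g|$. Then
\[
\frac{|F_n \Delta (g+F_n)|}{|F_n|} = \frac{2|g|}{n} \longrightarrow 0 \quad \text{as } n \to \infty,
\]
since $g$ is held fixed. This establishes that $(F_n)$ is a F\o lner sequence, and hence $\mathbb{Z}$ is amenable.

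Honestly, there is no real obstacle here — this is the prototypical example and the whole point is that it requires only elementary arithmetic (as the remark following the von Neumann characterisation theorem emphasises). The only thing worth being careful about is the bookkeeping for the size of the symmetric difference of two translated intervals; writing it as $2\min\{|g|,n\}$ and noting monotonicity handles both the case $n < |g|$ and $n \geq |g|$ uniformly. If one instead uses the symmetric intervals $F_n = [-n,n] \cap \mathbb{Z}$ with $|F_n| = 2n+1$, the same computation gives $|F_n \Delta (g+F_n)| = 2\min\{|g|, 2n+1\}$ and the ratio is $\le |g|/n \to 0$; I would just pick one convention and state it. I expect to present this in three or four lines.
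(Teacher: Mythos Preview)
Your proof is correct and takes essentially the same approach as the paper: exhibit intervals as an explicit F\o lner sequence and compute that $|F_n\Delta(g+F_n)|/|F_n|=2|g|/|F_n|\to 0$. The paper states it slightly more generally, taking arbitrary intervals $[m_i,n_i]$ with $n_i-m_i\to\infty$ (a generality it exploits later to show F\o lner sequences need not be nested or exhaustive), but the computation is identical to yours.
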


\begin{proof}
If $(m_{i})$ and $(n_{i})$ are sequences of integers such that $m_{i}\leq n_{i}$
for every $i\in\mathbb{N}$, and $n_{i}-m_{i}\rightarrow\infty$,
then $[m_{i},n_{i}]$ is a Følner sequence. To see this, let $k\in\mathbb{Z}$
and compute that 
\[
\frac{|[m_{i},n_{i}]\Delta k[m_{i},n_{i}]|}{|[m_{i},n_{i}]|}=\frac{2|k|}{n_{i}-m_{i}+1}\rightarrow0.
\]
\end{proof}
\begin{prop}
The product of two discrete amenable groups is again amenable. 
\end{prop}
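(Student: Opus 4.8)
The plan is to work directly with Følner sequences in the two factors and take products. Let $G$ and $H$ be discrete amenable groups, and let $(F_n)$ and $(F'_n)$ be Følner sequences in $G$ and $H$ respectively; if either group is uncountable, I would instead argue directly from the Følner property for finite subsets $K\subseteq G\times H$, but the countable case carries the essential idea. The natural candidate for a Følner sequence in $G\times H$ is the sequence of ``rectangles'' $(F_n\times F'_n)$.

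First I would record the elementary set-theoretic identity governing how the symmetric difference of a product interacts with translation. For $(g,h)\in G\times H$ one has $(g,h)(F\times F') = gF\times hF'$, and the key observation is the inclusion
\[
(A\times B)\,\Delta\,(A'\times B')\ \subseteq\ \bigl((A\Delta A')\times B\bigr)\ \cup\ \bigl(A'\times(B\Delta B')\bigr),
\]
which follows by checking membership of a pair $(x,y)$ casewise. Applying this with $A=F_n$, $A'=gF_n$, $B=F'_n$, $B'=hF'_n$, and using $|U\times V| = |U|\cdot|V|$ together with $|gF_n|=|F_n|$ and $|hF'_n|=|F'_n|$, I would obtain
\[
\frac{|(F_n\times F'_n)\,\Delta\,(g,h)(F_n\times F'_n)|}{|F_n\times F'_n|}\ \leq\ \frac{|F_n\Delta gF_n|}{|F_n|}\ +\ \frac{|F'_n\Delta hF'_n|}{|F'_n|}.
\]

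Since $(F_n)$ is Følner for $G$, the first term tends to $0$ for each fixed $g\in G$; since $(F'_n)$ is Følner for $H$, the second term tends to $0$ for each fixed $h\in H$. Hence the left-hand side tends to $0$ for every $(g,h)\in G\times H$, so $(F_n\times F'_n)$ is a Følner sequence for $G\times H$, and the product is amenable by the definition of the Følner property (and Proposition \ref{prop:Z-amenable}'s ambient characterisation of amenability via Følner sequences). For the general (possibly uncountable) case, given a finite $K\subseteq G\times H$ and $\varepsilon>0$, let $K_G$ and $K_H$ be the projections of $K$ to the two factors, choose finite $F\subseteq G$ witnessing the Følner property for $(K_G,\varepsilon/2)$ and finite $F'\subseteq H$ witnessing it for $(K_H,\varepsilon/2)$, and the same inclusion shows $F\times F'$ works for $(K,\varepsilon)$.

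There is no real obstacle here: the only thing to be careful about is verifying the symmetric-difference inclusion cleanly, since a sloppy version (e.g.\ trying to write it as an exact equality or using $\times$ of symmetric differences) is false. Once that combinatorial lemma is in hand the rest is a one-line estimate. I would present the inclusion as a displayed claim with a two-sentence casewise justification and then assemble the bound.
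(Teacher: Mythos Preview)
Your proof is correct and uses the same overall construction as the paper: take the product $F_n\times F'_n$ of F{\o}lner sets in the two factors. The difference lies in the set-theoretic step. The paper asserts the \emph{equality}
\[
(F_{1,n}\times F_{2,n})\,\Delta\,(g_1F_{1,n}\times g_2F_{2,n}) \;=\; (F_{1,n}\Delta g_1F_{1,n})\times(F_{2,n}\Delta g_2F_{2,n}),
\]
leading to a product bound on the ratio and a choice of $n$ making each factor less than $\sqrt{\varepsilon}$. That identity is actually false (take $g_2=e$: the right-hand side is empty while the left need not be), which is precisely the pitfall you flag in your final paragraph. Your inclusion
\[
(A\times B)\,\Delta\,(A'\times B')\ \subseteq\ \bigl((A\Delta A')\times B\bigr)\cup\bigl(A'\times(B\Delta B')\bigr)
\]
is the correct replacement and gives the additive estimate $\tfrac{|F_n\Delta gF_n|}{|F_n|}+\tfrac{|F'_n\Delta hF'_n|}{|F'_n|}$, so your argument is not only correct but cleaner than the paper's on this point. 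The added remark covering the uncountable case via projections of $K$ is also a nice touch the paper omits.
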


\begin{proof}
Let $G_{1}$ and $G_{2}$ be countable discrete groups with Følner
sequences $(F_{1,n})$ and $(F_{2,n})$. Considering the sequence
$(F_{1,n}\times F_{2,n})$ on $G_{1}\times G_{2}$, we observe that
\[
F_{1,n}\times F_{2,n}\Delta(g_{1},g_{2})F_{1,n}\times F_{2,n}=(F_{1,n}\times F_{2,n})\Delta(g_{1}F_{1,n}\times g_{2}F_{2,n})=(F_{1,n}\Delta g_{1}F_{n})\times(F_{2,n}\Delta g_{2}F_{n})
\]
so that 
\[
\frac{|F_{1,n}\times F_{2,n}\Delta(g_{1},g_{2})F_{1,n}\times F_{2,n}|}{|F_{1,n}\times F_{2,n}|}=\frac{|F_{1,n}\Delta g_{1}F_{1,n}|}{|F_{1,n}|}\frac{|F_{2,n}\Delta g_{2}F_{2,n}|}{|F_{2,n}|}
\]
Thus, if we pick $n$ large enough that both $\frac{|F_{1,n}\Delta g_{1}F_{1,n}|}{|F_{1,n}|}$
and $\frac{|F_{2,n}\Delta g_{2}F_{2,n}|}{|F_{2,n}|}$ are less than
$\sqrt{\varepsilon}$, we see that 
\[
\frac{|F_{1,n}\times F_{2,n}\Delta(g_{1},g_{2})F_{1,n}\times F_{2,n}|}{|F_{1,n}\times F_{2,n}|}<\varepsilon.
\]
Hence $(F_{1,n}\times F_{2,n})$ is a Følner sequence for $G_{1}\times G_{2}$. 
\end{proof}
\begin{prop}
Amenability is invariant under isomorphism. 
\end{prop}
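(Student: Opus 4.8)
The plan is to transport the Følner data through the isomorphism. Let $\varphi\colon G\to H$ be a group isomorphism and suppose $G$ is amenable; I want to show $H$ has the Følner property. Fix a finite set $K\subseteq H$ and $\varepsilon>0$. Since $\varphi$ is a bijection, $\varphi^{-1}(K)$ is a finite subset of $G$, so amenability of $G$ supplies a finite set $F\subseteq G$ with $|F\Delta kF|/|F|<\varepsilon$ for every $k\in\varphi^{-1}(K)$.

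The claim is then that $\varphi(F)$ witnesses the Følner property for $K$ in $H$. The key step is the identity $\varphi(F)\Delta\varphi(k)\varphi(F)=\varphi\bigl(F\Delta kF\bigr)$ for each $k\in\varphi^{-1}(K)$: because $\varphi$ is a homomorphism, $\varphi(k)\varphi(F)=\varphi(kF)$, and because $\varphi$ is injective it commutes with unions, intersections and complements, hence with symmetric difference. Applying $|\cdot|$ and using that $\varphi$ is a bijection (so $|\varphi(S)|=|S|$ for every finite $S\subseteq G$) gives
\[
\frac{|\varphi(F)\Delta\varphi(k)\varphi(F)|}{|\varphi(F)|}=\frac{|F\Delta kF|}{|F|}<\varepsilon .
\]
Since every element of $K$ is of the form $\varphi(k)$ with $k\in\varphi^{-1}(K)$, this shows $\varphi(F)$ does the job, so $H$ is amenable. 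Swapping the roles of $G$ and $H$ (applying the argument to $\varphi^{-1}$) yields the converse, so amenability is preserved in both directions.

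There is no real obstacle here; the entire content is the observation that a bijective homomorphism carries the combinatorial data defining a Følner set to the corresponding data in the target group. The only point requiring even minimal care is to pull $K$ back through $\varphi$ before invoking amenability of $G$, rather than attempting to push a Følner set forward without first knowing which finite set to approximate. In the countable case one may alternatively phrase this via Følner sequences: if $(F_n)$ is a Følner sequence for $G$, then $(\varphi(F_n))$ is one for $H$, by applying the same identity to each $g=\varphi(h)$ with $h\in H$.
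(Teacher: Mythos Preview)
Your argument is correct and follows essentially the same route as the paper: both transport F\o lner data through the isomorphism using that $\varphi(k)\varphi(F)=\varphi(kF)$, that a bijection commutes with symmetric difference, and that it preserves cardinality, yielding $\frac{|\varphi(F)\Delta\varphi(k)\varphi(F)|}{|\varphi(F)|}=\frac{|F\Delta kF|}{|F|}$. The only cosmetic difference is that the paper phrases it directly in terms of F\o lner sequences while you start from the finite-set F\o lner property (and note the sequence version at the end).
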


\begin{proof}
Let $\varphi:G_{1}\rightarrow G_{2}$ witness the isomorphism of $G_{1}$
and $G_{2}$. It suffices to show that $\varphi(F_{n})$ is a Følner
sequence. Since $\varphi$ is bijective we know that $|\varphi F_{n}|=|F_{n}|$,
and every element of $G_{2}$ can be written as $\varphi(g)$ for
$g\in G_{1}$. Moreover, $\varphi(g)\varphi(F_{n})=\varphi(gF_{n})$.
Likewise isomorphism commutes with set operations: $\varphi(A\cap B)=\varphi(A)\cap\varphi(B)$,
$\varphi(A\cup B)=\varphi(A)\cup\varphi(B)$, and likewise $\varphi(A^{c})=(\varphi(A))^{c}$,
hence also $\varphi(A\backslash B)=\varphi(A)\backslash\varphi(B)$
and, importantly for us, $\varphi(A\Delta B)=\varphi(A)\Delta\varphi(B)$.
Hence 
\[
\frac{|\varphi F_{n}\Delta\varphi g\varphi F_{n}|}{|\varphi F_{n}|}=\frac{|F_{n}\Delta gF_{n}|}{|F_{n}|}.
\]
\end{proof}
\begin{thm}
Every finitely generated abelian group is amenable.
\end{thm}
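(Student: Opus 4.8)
The plan is to reduce everything to the two basic building blocks already treated: the group $\mathbb{Z}$ and finite groups. The only additional ingredient is the classical structure theorem for finitely generated abelian groups, which says that any such group $G$ is isomorphic to a finite direct product
\[
\mathbb{Z}^{r}\times\mathbb{Z}/n_{1}\mathbb{Z}\times\cdots\times\mathbb{Z}/n_{k}\mathbb{Z}
\]
for some $r,k\geq0$ and integers $n_{i}\geq2$. I would invoke this as a known result from algebra.

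With that in hand the argument is a short assembly. First, each factor is amenable: the copies of $\mathbb{Z}$ are amenable by Proposition \ref{prop:Z-amenable}, and each cyclic group $\mathbb{Z}/n_{i}\mathbb{Z}$ is finite, hence amenable by the proposition that finite groups are amenable. Second, since the product of two discrete amenable groups is amenable, a trivial induction on the number of factors shows that the displayed finite product is amenable. Third, by invariance of amenability under isomorphism, $G$ itself is amenable.

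I do not expect any genuine obstacle here; the substantive input is simply the permission to cite the structure theorem. If one insisted on avoiding that theorem, the natural alternative would be induction on the number of generators of $G$: the base case is a cyclic group, which is either $\mathbb{Z}$ or finite, and in the inductive step one passes from an amenable subgroup $H$ to $\langle H,g\rangle = H+\mathbb{Z}g$. That step, however, amounts to proving that amenability is preserved under extension by $\mathbb{Z}$ or by a finite cyclic group — a fact not established earlier in the text — so one would have to build the relevant F\o lner sets by hand (roughly, thickening a F\o lner sequence for $H$ in the $g$-direction) and check the cardinality estimates carefully. That is feasible but fiddly, so the structure-theorem route is the more economical one and is the one I would write up.
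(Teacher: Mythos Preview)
Your proposal is correct and matches the paper's proof essentially line for line: invoke the structure theorem to write $G\cong\mathbb{Z}^{r}\times\prod_{j}\mathbb{Z}/n_{j}\mathbb{Z}$, then combine the earlier propositions on finite groups, $\mathbb{Z}$, products, and isomorphism invariance. The alternative induction-on-generators route you sketch is not needed.
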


\begin{proof}
Using each of the preceding propositions, we use the structure theorem
for finitely generated abelian groups to write $G\cong\mathbb{Z}^{n}\times\prod_{j=1}^{n}\mathbb{Z}/\mathbb{Z}_{q_{j}}$
for natural numbers $q_{j}$. 
\end{proof}
In fact, the previous result extends to \emph{all }countable abelian
groups, by way of the following general fact:
\begin{prop}
Let $(G_{n})$ be a sequence of countable amenable groups such that
$G_{i}\subseteq G_{i+1}$. Then $\bigcup G_{n}$ is also a countable
amenable group.
\end{prop}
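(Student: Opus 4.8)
The plan is to construct a Følner sequence for $G := \bigcup_n G_n$ by a diagonal argument, choosing at each stage a Følner set from one of the $G_n$ that is good enough for finitely many group elements. First I would fix an enumeration $(g_1, g_2, \ldots)$ of $G$, which is possible since $G$ is a countable union of countable groups, hence countable. For each $k$, every element among $g_1, \ldots, g_k$ lies in some $G_{m}$, and since the chain is increasing, there is a single index $m(k)$ with $g_1, \ldots, g_k \in G_{m(k)}$. Since $G_{m(k)}$ is amenable, it has the Følner property, so there is a finite set $F_k \subseteq G_{m(k)} \subseteq G$ such that $|F_k \,\Delta\, g_i F_k| / |F_k| < 1/k$ for all $i \in \{1, \ldots, k\}$.

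The key point is that the Følner condition for a subgroup transfers verbatim to the ambient group: if $F_k \subseteq G_{m(k)}$ and $g_i \in G_{m(k)}$, then the symmetric difference $F_k \,\Delta\, g_i F_k$ computed inside $G$ is the same set as computed inside $G_{m(k)}$, because left translation by an element of the subgroup preserves the subgroup, and the set operations are unaffected by which ambient group we regard the sets as living in. Hence the inequality $|F_k \,\Delta\, g_i F_k|/|F_k| < 1/k$ holds with the symmetric difference and cardinalities interpreted in $G$.

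It remains to check that $(F_k)$ is a Følner sequence for $G$ in the sense of the Proposition on page~\pageref{prop:Z-amenable}'s neighbourhood, i.e.\ that $|F_k \,\Delta\, g F_k|/|F_k| \to 0$ for every fixed $g \in G$. Given such a $g$, it equals $g_i$ for some $i$; then for all $k \geq i$ we have $g \in \{g_1, \ldots, g_k\} \subseteq G_{m(k)}$, so by construction $|F_k \,\Delta\, g F_k|/|F_k| < 1/k$, which tends to $0$ as $k \to \infty$. Thus $G$ has a Følner sequence, and since $G$ is countable, the earlier Proposition shows $G$ has the Følner property, i.e.\ $G$ is amenable. (Countability of $G$ should also be noted explicitly, as it is needed to invoke that Proposition.)

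I do not expect a genuine obstacle here; the only mild subtlety is being careful that ``Følner property'' as defined (quantifying over finite sets $K$ and $\varepsilon$) is what each $G_n$ supplies, and then packaging the diagonal choices into a sequence that works for all of $G$ at once --- exactly the same bookkeeping as in the proof that the Følner property implies the existence of a Følner sequence for countable groups. One should just make sure to state that the $F_k$, though chosen inside various $G_{m(k)}$, are legitimately finite subsets of $G$, so that the notion of Følner sequence for $G$ applies to them without modification.
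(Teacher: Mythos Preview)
Your proposal is correct and rests on the same key observation as the paper: any finite subset of $G$ lies in some $G_j$, so a suitable F\o lner set can be found inside $G_j$ and then regarded as a subset of $G$. The paper's proof is slightly more direct---it verifies the F\o lner \emph{property} (which is the paper's definition of amenability) immediately from this observation, without enumerating $G$ or building a F\o lner sequence: given finite $K \subset G$ and $\varepsilon > 0$, pick $j$ with $K \subset G_j$ and use amenability of $G_j$ to produce the required $F$. Your diagonal construction of a F\o lner sequence works fine, but the detour through the equivalence between F\o lner sequences and the F\o lner property is unnecessary here, since the latter can be checked in one line.
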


\begin{proof}
Obviously $\bigcup G_{n}$ is also a group \textemdash{} given any
two elements $g$ and $h$, there is some index $j$ such that $g,h\in G_{j}$,
hence $gh\in G_{j}\subset G$. The proof of amenability uses the same
strategy. Given any finite subset $K$ of $G$, there is some index
$j$ such that $K\subset G_{j}$. From the amenability of $G_{j}$,
for every $\varepsilon$ there exists a finite $F\subset G_{j}$ such
that $|F\Delta kF|<\varepsilon|F|$, and of course $F$ is also a
subset of $G$.
\end{proof}
\begin{cor}
Since every countable abelian group can be written as a countable
chain of finitely generated abelian groups, we conclude that every
countable abelian group is amenable. 
\end{cor}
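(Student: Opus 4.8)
The plan is to exhibit an arbitrary countable abelian group $G$ as an increasing union of finitely generated subgroups and then assemble the two preceding results. First I would fix an enumeration $g_{1},g_{2},\ldots$ of the elements of $G$ (if $G$ happens to be finite, simply repeat elements so that the enumeration is infinite; this causes no trouble). For each $n\in\mathbb{N}$ set $G_{n}:=\langle g_{1},\ldots,g_{n}\rangle$, the subgroup of $G$ generated by the first $n$ enumerated elements. By construction $G_{n}\subseteq G_{n+1}$ for every $n$, each $G_{n}$ is finitely generated, and each $G_{n}$ is abelian, being a subgroup of the abelian group $G$.

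Next I would check that $\bigcup_{n}G_{n}=G$. The inclusion $\subseteq$ is immediate since each $G_{n}\subseteq G$; conversely, any $g\in G$ equals $g_{m}$ for some index $m$, whence $g\in G_{m}\subseteq\bigcup_{n}G_{n}$. (In the finite case the chain is eventually constant, equal to $G$, which is perfectly consistent with the argument.) Thus $(G_{n})$ is an increasing chain of subgroups whose union is $G$.

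Now each $G_{n}$ is a finitely generated abelian group, hence amenable by the theorem that every finitely generated abelian group is amenable; in particular each $G_{n}$ is countable. Therefore $(G_{n})$ is an increasing sequence of countable amenable groups, and the proposition on increasing unions of amenable groups applies directly, yielding that $\bigcup_{n}G_{n}=G$ is amenable.

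There is no genuine obstacle: all the substantive content is supplied by the two cited results, and the only thing that must be verified independently is the elementary fact that a countable group is the union of the increasing chain of subgroups generated by the initial segments of an enumeration of its elements.
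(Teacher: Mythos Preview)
Your proof is correct and follows exactly the approach the paper intends: the corollary is stated without a separate proof, and your argument simply spells out the assertion in its hypothesis---that a countable abelian group is the increasing union of the finitely generated subgroups $G_n=\langle g_1,\ldots,g_n\rangle$---and then invokes the preceding theorem and proposition in the obvious way.
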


As a remark, this does not prove that every countable group is amenable.
One might be tempted to write a countable group as an increasing chain
of finite subsets, but the proof requires that they be finite sub\emph{groups}
\textemdash{} for this to work you'd need to assume that every element
has finite order, which is not true in general!

Before we proceed, we take the opportunity to record several handy
facts about Følner sequences.
\begin{prop}
Let $(F_{n})$ be a Følner sequence on $G$. (1) It is not necessary
that $\bigcup_{n}F_{n}=G$. (2) It is not necessary that for all $n\in\mathbb{N}$,
$F_{n}\subset F_{n+1}$. However, (3) it is always the case that $|F_{n}|\rightarrow\infty$
when $G$ is countably infinite. 
\end{prop}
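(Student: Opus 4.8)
The plan is to prove the three parts essentially independently, since they are all fairly elementary once one has the right examples in hand.

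For part (1), I would exhibit a F\o lner sequence whose union is a proper subset of $G$. The cleanest choice is $G = \mathbb{Z}$ with $F_n = [0,n]$ (or $[1,n]$); by Proposition \ref{prop:Z-amenable} this is a F\o lner sequence, but $\bigcup_n F_n = \mathbb{N} \neq \mathbb{Z}$ (resp. the positive integers). For part (2), I would exhibit a F\o lner sequence that is not nested. Again working in $\mathbb{Z}$, one can take $F_n = [n, 2n]$ (or $[n^2, n^2 + n]$): the computation $|F_n \Delta k F_n|/|F_n| = 2|k|/(n+1) \to 0$ shows it is F\o lner, yet $F_1 = \{1,2\}$ is not contained in $F_2 = \{2,3,4\}$, and in general consecutive intervals are essentially disjoint. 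A single example can in fact be engineered to handle both (1) and (2) at once — e.g. $F_n = [n, 2n]$ has non-nested terms and union equal to $\mathbb{Z}_{\geq 1} \neq \mathbb{Z}$ — but it is clearer to state them separately.

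Part (3) is the only part with actual content, and it is the one I expect to require a real argument. The claim is that if $G$ is countably infinite and $(F_n)$ is a F\o lner sequence, then $|F_n| \to \infty$. The strategy is to suppose for contradiction that $|F_n| \not\to \infty$, so there is a constant $C$ and an infinite set of indices $n$ with $|F_n| \leq C$. Passing to this subsequence, we may assume $|F_n| \leq C$ for all $n$; since $G$ is infinite we may choose distinct elements $g_1, \dots, g_{C+1} \in G$ and set $K = \{g_i g_j^{-1} : i \neq j\}$ (a finite set). The F\o lner condition forces $|F_n \Delta k F_n|/|F_n| < \varepsilon$ for all $k$ in this finite set once $n$ is large, hence in particular $|F_n \cap k F_n| > (1-\varepsilon)|F_n|$. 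The key combinatorial point: if $\varepsilon$ is small relative to $1/C$, then the translates $g_1 F_n, \dots, g_{C+1} F_n$ must pairwise intersect (each has size $|F_n| \leq C$, and they overlap $F_n$ in more than a $(1-\varepsilon)$-fraction, so by inclusion-exclusion any two of them meet). But $C+1$ pairwise-intersecting translates of a set of size at most $C$ is impossible once we track cardinalities carefully — more precisely, one shows the sets $g_i F_n$ cannot all be distinct, forcing $g_i F_n = g_j F_n$ for some $i \neq j$, i.e. $g_j^{-1} g_i \in \mathrm{Stab}$ of the set $F_n$ under left multiplication, which has size at most $|F_n| \leq C$; running this for all large $n$ with $C+1$ fixed elements yields a contradiction with $G$ infinite. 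The main obstacle is getting the counting in this last step clean: the slickest route is to observe that left multiplication by $g$ is a bijection, so $|gF_n| = |F_n|$, and the subgroup $H = \{g \in G : gF_n = F_n\}$ satisfies $|H| \leq |F_n| \leq C$ (since $H \cdot x \subseteq F_n$ for any fixed $x \in F_n$); then the F\o lner condition with $\varepsilon < 1/(2C)$ and $K$ a symmetric generating-enough finite set forces each $k \in K$ into $H$, bounding the number of such $k$ by $C$, which fails as soon as $|K|$ is taken larger than $C$ using the infinitude of $G$.
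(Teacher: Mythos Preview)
Your examples for (1) and (2) are the same as the paper's: both of you simply invoke the family of interval F\o lner sequences $[m_i,n_i]$ in $\mathbb{Z}$ with $n_i - m_i \to \infty$ from Proposition~\ref{prop:Z-amenable}.

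For (3) you end up with a correct argument, but it is genuinely different from the paper's. The paper argues via the Cayley graph: assuming $|F_n|\leq N$ for all $n$, it finds for each $F_n$ a generator $g$ with an outgoing edge from $F_n$, giving $|F_n\Delta gF_n|/|F_n|\geq 1/N$, and then splits into the finitely generated case (pigeonhole on generators) and the infinitely generated case (a separate counting argument on outgoing edges). Your ``slickest route'' is more algebraic and avoids the case split entirely: after passing to a bounded subsequence, you take any finite $K\subset G$ with $|K|>C$, choose $\varepsilon<1/C$ so that $|F_n\Delta kF_n|<\varepsilon|F_n|<1$ forces $kF_n=F_n$, and conclude $K$ lies in the setwise stabilizer $H=\{g:gF_n=F_n\}$, which has $|H|\leq|F_n|\leq C$ since $H\cdot x\subseteq F_n$ for any $x\in F_n$. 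This is cleaner than the paper's proof and uses no graph-theoretic language.

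Two small points on your write-up. First, your intermediate paragraph about ``$C+1$ pairwise-intersecting translates'' is imprecise: with $K=\{g_ig_j^{-1}:i\neq j\}$ the F\o lner condition gives $|g_iF_n\Delta g_jF_n|$ small, not $|g_iF_n\Delta F_n|$ small, so the parenthetical ``they overlap $F_n$'' is off unless you take $g_1=e$; and ``$C+1$ pairwise-intersecting sets of size $\leq C$'' is not by itself a contradiction. You should drop that paragraph and go straight to the stabilizer argument. Second, in the final version there is no need for $K$ to be ``symmetric generating-enough'': any finite $K\subset G$ with $|K|>C$ will do, which is available precisely because $G$ is infinite.
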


\begin{proof}
We already proved in Proposition \ref{prop:Z-amenable} that $[m_{i},n_{i}]$
is a Følner sequence provided that $n_{i}-m_{i}\rightarrow\infty$.
This implies that neither (1) nor (2) is necessary.

For (3), suppose $|F_{n}|\leq N$ for all $n\in\mathbb{N}$. We first
suppose that $G$ is finitely generated. If we view $F_{n}$ as a
subset of the Cayley graph, it is clear that there is always at least
one outgoing edge from $F_{n}$. (Otherwise, since the Cayley graph
of $G$ is connected, this would mean that $F_{n}=G$, which does
not occur if $G$ is infinite.) Thus we can pick a generator $g$
of $G$ such that there is an element $f\in F_{n}$ such that $gf\notin F_{n}$.
Consequently, 
\[
\frac{|F_{n}\Delta gF_{n}|}{|F_{n}|}\geq\frac{1}{|F_{n}|}\geq\frac{1}{N}.
\]
This implies that for any generator $g$, there are infinitely many
terms in the Følner sequence such that $|F_{n}\Delta gF_{n}|/|F_{n}|\geq1/N$,
and thus $|F_{n}\Delta gF_{n}|/|F_{n}|\not\rightarrow0$. 

In the case where $G$ is infinitely generated we can run a related
argument. In the Cayley graph, for every point $f\in F_{n}$ it must
be the case that all but finitely many edges from $f$ are outgoing,
simply because $|F_{n}|<\infty$. More specifically, since $|F_{n}|\leq N$
it must be the case that at every point, all but $N-1$ edges are
outgoing. Thus, all but $N(N-1)$ generators are associated to an
edge which is outgoing from \emph{every} point in $F_{n}$.

Consequently, for every $F_{n}$, all but $N(N-1)$ many generators
$g\in G$ have the property that $F_{n}\cap gF_{n}=\emptyset$ and
thus $|F_{n}\Delta gF_{n}|=2|F_{n}|$. 

This implies that all but $N(N-1)$ many generators have the property
that $|F_{n}\Delta gF_{n}|=2|F_{n}|$ for infinitely many $n$ \textemdash{}
observe that if there are $N(N-1)$ many generators which have $|F_{n}\Delta gF_{n}|<2|F_{n}|$
for all but finitely many terms, then there is some index $K$ such
that for all $n\geq K$, \emph{all }of these generators have $|F_{n}\Delta gF_{n}|<2|F_{n}|$,
and consequently for each $n\geq K$ these are the \emph{only} generators
with $|F_{n}\Delta gF_{n}|<2|F_{n}|$.

This shows that there is a $g$ (in fact there are infinitely many)
such that for infinitely many terms in the Følner sequence such that
$|F_{n}\Delta gF_{n}|/|F_{n}|=2$, and thus $|F_{n}\Delta gF_{n}|/|F_{n}|\not\rightarrow0$. 
\end{proof}
\begin{rem*}
If a Følner sequence happens to have the property that $\bigcup_{n}F_{n}=G$,
then we call $(F_{n})$ a \emph{Følner exhaustion}. Likewise if it
so happens that $F_{n}\subset F_{n+1}$ for each $n\in\mathbb{N}$,
we call $(F_{n})$ an \emph{increasing Følner sequence}. Both of these
are frequently occuring side conditions in theorems about amenable
groups.
\end{rem*}
We now give the most basic example of a group which is not amenable.
\begin{prop}
The group $F_{2}$, the free group on two generators, is not amenable. 
\end{prop}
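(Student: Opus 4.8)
The plan is to show that $F_2 = \langle a, b \rangle$ fails the F\o{}lner property by exploiting a paradoxical-type decomposition of the group using the boundary words of reduced group elements. First I would set up notation: for each reduced word $w \neq e$, let $w$ begin with one of the four letters $a, a^{-1}, b, b^{-1}$, and write $W(a), W(a^{-1}), W(b), W(b^{-1})$ for the four corresponding subsets of $F_2 \setminus \{e\}$, which partition $F_2 \setminus \{e\}$. The key combinatorial observation is that $aW(a^{-1})$ contains everything except $W(a)$ (and possibly $e$-related bookkeeping): left-multiplying by $a$ a word that starts with $a^{-1}$ cancels, and the result can start with anything other than $a$. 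More precisely, $W(a^{-1}) \subseteq a W(a^{-1}) \cup \ldots$; the cleanest packaging is that $F_2 = W(a) \sqcup a W(a^{-1})$ up to the element $e$, and symmetrically $F_2 = W(b) \sqcup b W(b^{-1})$ up to $e$.

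Next I would suppose for contradiction that $F_2$ has the F\o{}lner property, and apply it to the finite set $K = \{a, b\}$ with a small $\varepsilon$ to be chosen (say $\varepsilon = 1/10$), obtaining a finite nonempty $F$ with $|F \,\Delta\, aF| < \varepsilon |F|$ and $|F \,\Delta\, bF| < \varepsilon|F|$. The idea is that $|F \,\Delta\, gF| < \varepsilon |F|$ forces $|F \cap gF| > (1 - \varepsilon)|F|$ and also $|F \setminus gF| < \varepsilon|F|$, i.e.\ $F$ is almost invariant under both $a$ and $b$. Now I would estimate $|F|$ by splitting $F$ (or rather $F$ minus at most one point) according to which of $W(a), W(a^{-1}), W(b), W(b^{-1})$ each element lies in, writing $n_a = |F \cap W(a)|$, etc. Using $F_2 = W(a) \sqcup aW(a^{-1})$ up to $e$, almost-invariance under $a$ gives that $|F \cap W(a^{-1})| = |a^{-1}(aF \cap W(a))| $ is comparable to $|F \cap W(a)|$ up to an error of order $\varepsilon |F|$; chasing this, one finds each of $n_a, n_{a^{-1}}, n_b, n_{b^{-1}}$ is within $O(\varepsilon|F|)$ of $|F|/2$. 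But $n_a + n_{a^{-1}} + n_b + n_{b^{-1}} = |F|$ or $|F| - 1$, so summing four quantities each $\approx |F|/2$ gives $\approx 2|F|$, a contradiction once $\varepsilon$ is small enough.

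The main obstacle is getting the combinatorial identity relating $gF$ to the $W(\cdot)$ pieces exactly right, and tracking the $e$ bookkeeping and the directions of the set inclusions carefully enough that the error terms genuinely stay $O(\varepsilon|F|)$ rather than blowing up. A cleaner route that avoids delicate cancellation counting is the following variant: note $W(a) \cup aW(a) \cup a^2 W(a) \cup \cdots$ type arguments are overkill; instead observe directly that $a^{-1}\big(W(a^{-1})\big)$ and $b^{-1}\big(W(b^{-1})\big)$ together with $W(a)$ and $W(b)$ overcount, since $a^{-1}W(a^{-1}) \supseteq F_2 \setminus W(a^{-1})$ roughly. I would phrase the final inequality as: almost-invariance yields $n_{a^{-1}} \le n_a + \varepsilon|F|$ wait — rather, it yields that $F$ and $aF$ agree up to $\varepsilon|F|$, and since $aF \cap W(a^{-1}) = a(F \cap W(e\text{-free complement of }W(a)))$, one deduces $n_{a^{-1}} \ge (1-\varepsilon)|F| - n_a$ hence $n_a + n_{a^{-1}} \ge (1-\varepsilon)|F| - \varepsilon|F|$, and symmetrically $n_b + n_{b^{-1}} \ge (1 - 2\varepsilon)|F|$; adding gives $|F| \ge (2 - 4\varepsilon)|F|$, impossible for $\varepsilon < 1/4$. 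I expect the write-up to hinge on stating and proving the lemma ``$F_2 \setminus \{e\} = W(a) \sqcup aW(a^{-1})$'' carefully, after which the counting is short.
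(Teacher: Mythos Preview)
Your approach is essentially the same as the paper's: both partition $F_2 \setminus \{e\}$ according to the first letter of the reduced word and derive a contradiction from the fact that an almost-invariant finite set $F$ would force the four pieces to overcount $|F|$. Your key identity $F_2 = W(a) \sqcup aW(a^{-1})$ is exactly the right combinatorial input, and in fact is more careful than the paper's phrasing --- the paper asserts that ``every element of $aF$ clearly begins with $a$,'' which is false as stated because of cancellation (e.g.\ $a\cdot a^{-1}b = b$); your decomposition sidesteps this and yields the clean inequality $n_a + n_{a^{-1}} \ge (1-\varepsilon)|F|$, after which the contradiction for $\varepsilon < 1/2$ is immediate.
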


\begin{proof}
Let $K$ be the finite set $\{a,b,a^{-1},b^{-1}\}$. Given another
finite set $F$, let $F_{a}$ denote the subset of words in $F$ beginning
with $a$ and likewise for the other generators. We remark that $F\cap F_{a}$
is a superset of $F\cap aF$ from above \textemdash{} every element
of $aF$ clearly begins with $a$ but need not be an element of $F$.
Observe that 
\[
\frac{|F\Delta gF|}{|F|}+\frac{|F\cap gF|}{|F|}=1
\]
so that amenability is equivalent to being able to find, for every
$\varepsilon$, an $F$ such that for each element $g$ of $\{a,b,a^{-1},b^{-1}\}$
simultaneously, $|F\cap gF|/|F|>1-\varepsilon$. However, 
\[
|F|=|F\cap F_{a}|+|F\cap F_{b}|+|F\cap F_{a^{-1}}|+|F\cap F_{b^{-1}}|.
\]
Thus it is jointly impossible for all of $|F\cap F_{g}|/|F|$ to be
greater than $1/4$, therefore it's impossible for all $|F\cap gF|/|F|$
to be simultaneously greater than $1/4$. 
\end{proof}
\begin{rem*}
The same strategy works for the free group on $n$ generators, just
with $1/2n$ instead of $1/4$. 
\end{rem*}
The class of amenable groups is also closed under the following diagrammatic
operations:
\begin{thm}
(i) Subgroups of amenable groups are amenable. 

(ii) Quotient groups of amenable groups are amenable. 

(iii) Group extensions are amenable: if $N\triangleleft G$ and $N$
and $G/N$ are both amenable, then $G$ is also amenable. 
\end{thm}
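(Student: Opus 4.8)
The plan is to carry out all three parts via the characterisation of amenability by a left-invariant finitely additive mean $\mu$ on $\ell^\infty(\Gamma)$ --- a positive, normalised, linear functional satisfying $\mu(g\cdot f)=\mu(f)$ for all $g\in\Gamma$, where $(g\cdot f)(x)=f(g^{-1}x)$ --- which is far more convenient for closure properties than manipulating Følner sequences directly. In each case I would construct an invariant mean on the target group by transporting the given invariant mean(s) along the relevant map (restriction along a transversal, pullback along a quotient, or a two-step iterated average), and then verify invariance by a short change-of-variables computation.

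For (i), fix a left-invariant mean $\mu$ on $\ell^\infty(G)$ and a set $T\subseteq G$ meeting each right coset $Hg$ exactly once, so that each $g\in G$ factors uniquely as $g=h(g)t(g)$ with $h(g)\in H$, $t(g)\in T$. For $f\in\ell^\infty(H)$ define $\tilde f\in\ell^\infty(G)$ by $\tilde f(g)=f(h(g))$ and put $\nu(f)=\mu(\tilde f)$. Then $\nu$ is a mean (note $\widetilde{\mathbf 1}_H=\mathbf 1_G$), and since $h(h_0^{-1}g)=h_0^{-1}h(g)$ and $t(h_0^{-1}g)=t(g)$ for $h_0\in H$, one has $\widetilde{h_0\cdot f}=h_0\cdot\tilde f$, hence $\nu(h_0\cdot f)=\nu(f)$; so $H$ is amenable. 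For (ii), with $\pi\colon G\to G/N$ the quotient map and $\mu$ a left-invariant mean on $\ell^\infty(G)$, set $\nu(f)=\mu(f\circ\pi)$ for $f\in\ell^\infty(G/N)$; lifting $\bar g\in G/N$ to any $g\in\pi^{-1}(\bar g)$ gives $(\bar g\cdot f)\circ\pi=g\cdot(f\circ\pi)$, so $\nu(\bar g\cdot f)=\nu(f)$ and $G/N$ is amenable.

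Part (iii) is the substantive one. Assume $N\triangleleft G$ with left-invariant means $\mu_N$ on $\ell^\infty(N)$ and $\mu_{G/N}$ on $\ell^\infty(G/N)$, and fix a section $s\colon G/N\to G$ of $\pi$. For $f\in\ell^\infty(G)$, first average along each coset: set $\phi_f(\bar g):=\mu_N\bigl(n\mapsto f(s(\bar g)n)\bigr)$, which lies in $\ell^\infty(G/N)$ with $\|\phi_f\|_\infty\leq\|f\|_\infty$; then put $\nu(f):=\mu_{G/N}(\phi_f)$. Positivity, normalisation and linearity of $\nu$ are inherited from $\mu_N$ and $\mu_{G/N}$. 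For left-invariance, fix $g_0\in G$ and write $\bar g_0=\pi(g_0)$; by normality there is, for each $\bar g$, an element $m=m(\bar g)\in N$ with $g_0^{-1}s(\bar g)=s(\bar g_0^{-1}\bar g)\,m$, so $(g_0\cdot f)(s(\bar g)n)=f\bigl(s(\bar g_0^{-1}\bar g)\,mn\bigr)$, and left-invariance of $\mu_N$ (via the substitution $n\mapsto mn$) yields $\phi_{g_0\cdot f}(\bar g)=\phi_f(\bar g_0^{-1}\bar g)=(\bar g_0\cdot\phi_f)(\bar g)$. Thus $\phi_{g_0\cdot f}=\bar g_0\cdot\phi_f$, and left-invariance of $\mu_{G/N}$ gives $\nu(g_0\cdot f)=\nu(f)$. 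Hence $\nu$ is a left-invariant mean on $\ell^\infty(G)$, so $G$ is amenable.

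The main obstacle is the bookkeeping in (iii): the section $s$ is not a homomorphism, and one must check that the resulting discrepancy $m(\bar g)\in N$ is absorbed exactly by the invariance of $\mu_N$, leaving behind precisely the $G/N$-translation that $\mu_{G/N}$ annihilates --- getting the order of the two averages, and the side on which translations act, correct is the only delicate point. (If one prefers to stay with Følner sets: (i) follows by a pigeonhole argument over the cosets met by a Følner set of $G$, and (ii) by a pushforward, but (iii) then requires assembling a Følner set of $G$ of the shape $\bigsqcup_{\bar g\in E}s(\bar g)F$ with $E$ almost invariant in $G/N$ and $F$ almost invariant in $N$ but chosen large enough relative to $E$ and $s$ --- a noticeably more delicate construction than the mean-theoretic route.)
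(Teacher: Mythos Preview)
Your proof is correct. The paper itself does not supply a proof of this theorem; it simply writes ``See Tao's notes \cite{tao2009amenable}.'' Your argument via invariant means is the standard one and is essentially what one finds in that reference, so there is nothing to compare against in the paper proper.
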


\begin{proof}
See Tao's notes \cite{tao2009amenable}. 
\end{proof}
\begin{cor}
Every countable solvable group is amenable. 
\end{cor}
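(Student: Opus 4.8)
The plan is to induct on the \emph{derived length} of the solvable group $G$. Recall that the derived series of $G$ is the decreasing chain $G = G^{(0)} \triangleright G^{(1)} \triangleright \cdots$, where $G^{(i+1)} = [G^{(i)},G^{(i)}]$ is the commutator subgroup; solvability means precisely that $G^{(n)} = \{e\}$ for some $n$, and the least such $n$ is the derived length. By construction each quotient $G^{(i)}/G^{(i+1)}$ is abelian, and each $G^{(i)}$ is normal in $G$ (indeed characteristic). Since $G$ is countable, every subgroup $G^{(i)}$ is countable, and every quotient $G^{(i)}/G^{(i+1)}$ is countable as well.

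For the base case, if the derived length is $0$ then $G$ is trivial, hence finite, hence amenable; if it is $1$ then $G = G/G^{(1)}$ is abelian and countable, hence amenable by the Corollary above (every countable abelian group is amenable). For the inductive step, suppose every countable solvable group of derived length $\leq n$ is amenable, and let $G$ have derived length $n+1$. Put $N := G^{(1)} = [G,G]$. Then $N \triangleleft G$, $N$ is countable, and $N$ has derived length $n$ since $(G^{(1)})^{(k)} = G^{(k+1)}$; by the inductive hypothesis $N$ is amenable. The quotient $G/N$ is countable and abelian, hence amenable by the Corollary above. Now invoke part (iii) of the preceding theorem on diagrammatic operations — amenability of group extensions — with $N \triangleleft G$ and both $N$ and $G/N$ amenable, to conclude that $G$ is amenable. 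This closes the induction.

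There is no genuine obstacle here: the proof is a direct assembly of results already in hand, namely that countable abelian groups are amenable and that amenability passes through extensions. The only points needing a moment's care are bookkeeping ones — checking that $G^{(1)}$ is normal (automatic, being a commutator subgroup) so that part (iii) applies, and checking that countability is inherited by the subgroup $G^{(1)}$ and the quotient $G/G^{(1)}$ so that the abelian case is genuinely available at each stage. One could equivalently avoid the induction by chaining $n$ successive applications of the extension theorem along the full derived series $\{e\} = G^{(n)} \triangleleft G^{(n-1)} \triangleleft \cdots \triangleleft G^{(0)} = G$, building amenability of $G^{(i)}$ from amenability of $G^{(i+1)}$ together with that of the abelian quotient $G^{(i)}/G^{(i+1)}$; the inductive phrasing simply packages this bookkeeping more cleanly.
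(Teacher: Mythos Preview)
Your proof is correct and follows essentially the same strategy as the paper: induct along a subnormal series with abelian quotients, applying the extension theorem (part (iii)) at each step together with amenability of countable abelian groups. The only cosmetic difference is that you induct on derived length using the canonical derived series $G^{(i)}$, whereas the paper uses an arbitrary subnormal series $\{e\}=G_1\triangleleft\cdots\triangleleft G_n=G$ and builds amenability from the bottom up; your final paragraph in fact spells out exactly the paper's version.
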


\begin{proof}
Recall that a group $G$ is solvable if there is a finite sequence
$(G_{k})_{k=1,\ldots,n}$ of subgroups of $G$, such that $G_{1}=\{e\}$
and $G_{n}=G$, and such that $G_{k-1}$ is normal in $G_{k}$, and
$G_{k}/G_{k-1}$ is abelian. 

Obviously $\{e\}$ is amenable. Now, suppose that $G_{k-1}$ is amenable.
Then, since $G_{k}/G_{k-1}$ is abelian, and therefore amenable, it
follows from the the third part of the previous theorem that $G_{k}$
is also amenable. Therefore it follows that $G$ is amenable by induction
on $k$.
\end{proof}
\begin{rem*}
Every nilpotent group is solvable, so every nilpotent group is also
amenable. 
\end{rem*}
Before proceeding, we recall the notion of a \emph{word metric} on
a group: given a finitely generated group $G$ with a specified list
of generators, the ``distance'' of an element $g$ to the origin
$e$ is given by the total number of generators in $g$ when $g$
is written as a reduced word (so an element $a^{2}b^{3}$ is distance
5 from the origin, for example). We then say that $d(g,h)$ is given
by the reduced word length of $g^{-1}h$ (a convenient choice which
makes $d$ invariant under left-multiplication). 
\begin{defn}
Consider a (countable) finitely generated group $G$ with a word metric
$d$. We say that $G$ has \emph{subexponential growth }if 
\[
\lim_{n\rightarrow\infty}\frac{\log|\bar{B}(e,n)|}{n}=0
\]
and has exponential growth otherwise. Here, $\bar{B}(e,n)$ denotes
the closed ball of radius $n$ around the identity $e,$ i.e. the
set $\{g\in G\mid d(e,g)\leq n\}$. We sometimes also use the shorthand
$\bar{B}(n)$. 
\end{defn}

\begin{rem*}
The choice of base for the logarithm is irrelevant. Moreover (and
less obviously), the choice of generating set defining the word metric
is \emph{also} irrelevant \textemdash{} this is a consequence of the
fact that word metrics are \emph{quasi-isometric} to each other. See
for instance de la Harpe's book \cite{de2000topics}. 
\end{rem*}
\begin{example}
Let $F_{2}$ be the free group on two generators. For our word metric
we use the generating set $\{a,b,a^{-1},b^{-1}\}$. Then, $\bar{B}(e,1)=5$,
$\bar{B}(e,2)=17$, and more generally $|\bar{B}(e,n+1)|-|\bar{B}(e,n)|=3(|\bar{B}(e,n)|-|\bar{B}(e,n-1)|)$.
By a recursive computation this implies that $|\bar{B}(e,n+1)|-|\bar{B}(e,n)|=3^{n}\cdot4$.
Thus, 
\[
|\bar{B}(e,n+1)|=|\bar{B}(e,0)|+\sum_{k=0}^{n}|\bar{B}(e,k+1)|-|\bar{B}(e,k)|=1+4\sum_{k=0}^{n}3^{k}=1+6(3^{n}-1).
\]
Picking the base of the logarithm as $3$ for convenience, it follows
that 
\[
\frac{\log_{3}|\bar{B}(n)|}{n}=\frac{\log_{3}(2\cdot3^{n+1}-5)}{n}\approx\frac{n+1+\log_{3}2}{n}\longrightarrow1.
\]
Thus, $F_{2}$ has exponential growth as we would expect. A similar
argument works for larger free groups. 
\end{example}

\begin{prop}
Every group of subexponential growth is amenable.
\end{prop}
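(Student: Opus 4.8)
The plan is to build a F\o lner sequence out of the metric balls of $G$ and invoke the characterization of amenability for countable groups via F\o lner sequences proved above. Note first that subexponential growth is only defined for a finitely generated group $G$ (and, as remarked, does not depend on the chosen finite generating set), so all balls $\bar B(n)$ are finite; thus it suffices to exhibit a strictly increasing sequence $(n_j)$ with $|\bar B(n_j)\,\Delta\, g\bar B(n_j)|/|\bar B(n_j)|\to 0$ for every $g\in G$. I would start from the elementary estimate: if $d(e,g)=m$, then for $n\ge m$ left-invariance of the word metric gives $g\,\bar B(n-m)\subseteq \bar B(n)\cap g\bar B(n)$ (from $d(e,h)\le n-m\Rightarrow d(e,gh)\le m+d(e,h)\le n$), so, since left translation is a bijection of $G$, $|\bar B(n)\cap g\bar B(n)|\ge|\bar B(n-m)|$; together with $|g\bar B(n)|=|\bar B(n)|$ this gives
\[
\frac{|\bar B(n)\,\Delta\, g\bar B(n)|}{|\bar B(n)|}\;=\;2\,\frac{|\bar B(n)|-|\bar B(n)\cap g\bar B(n)|}{|\bar B(n)|}\;\le\;2\left(1-\frac{|\bar B(n-m)|}{|\bar B(n)|}\right).
\]
Hence it is enough to produce $(n_j)$ along which $|\bar B(n_j-m)|/|\bar B(n_j)|\to 1$ for each fixed $m$.

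To extract such a sequence I would use subexponential growth together with a diagonal argument. Fix $m$; telescoping gives $\prod_{k=1}^{K}|\bar B(km)|/|\bar B((k-1)m)|=|\bar B(Km)|$, so if only finitely many of the ratios $|\bar B(km)|/|\bar B((k-1)m)|$ were below $1+\varepsilon$ we would get $|\bar B(Km)|\ge c\,(1+\varepsilon)^{K}$ for all large $K$, and hence $\liminf_{K}\tfrac{1}{Km}\log|\bar B(Km)|\ge\tfrac1m\log(1+\varepsilon)>0$, contradicting subexponential growth. So for every $m$ and every $\varepsilon>0$ there are infinitely many $n$ (of the form $km$) with $|\bar B(n)|<(1+\varepsilon)|\bar B(n-m)|$; and since $n\mapsto|\bar B(n)|$ is nondecreasing, any such $n$ satisfies $|\bar B(n)|<(1+\varepsilon)|\bar B(n-m')|$ for all $m'\le m$ as well. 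Now choose recursively $n_j>n_{j-1}$, a multiple of $j$, with $|\bar B(n_j)|<(1+1/j)\,|\bar B(n_j-j)|$. Then for any fixed $m$ and all $j\ge m$ we have $|\bar B(n_j-m)|/|\bar B(n_j)|>1/(1+1/j)\to 1$, so by the displayed estimate $(\bar B(n_j))_j$ is a F\o lner sequence and $G$ is amenable.

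I expect the diagonalization to be the only genuinely delicate point: knowing that for each individual generator the ratio $|\bar B(n)|/|\bar B(n-1)|$ is close to $1$ along \emph{some} subsequence is not enough, since a priori different generators could force incompatible subsequences. What rescues the argument is the monotonicity of $n\mapsto|\bar B(n)|$, which makes a single ``good'' index for modulus $j$ automatically good for every smaller modulus, so that one diagonal subsequence handles all of $G$ simultaneously. The two remaining ingredients — the ball estimate and the ``infinitely many small ratios'' consequence of subexponential growth — are routine.
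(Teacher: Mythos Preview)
Your argument is correct, and it shares the paper's core idea of using metric balls as F\o lner sets, but the execution differs in one key respect. The paper verifies the F\o lner \emph{property} directly: given finite $K$ and $\varepsilon>0$, it enlarges the generating set to a symmetric $A\supseteq K$, so that every $k\in K$ has word length~$1$ in the associated metric; then the single estimate $|\bar B(n)\Delta k\bar B(n)|/|\bar B(n)|\le 2(|\bar B(n+1)|-|\bar B(n)|)/|\bar B(n)|$ holds for all $k\in K$ at once, and subexponential growth (which is independent of the generating set) supplies an $n$ with $|\bar B(n+1)|/|\bar B(n)|<1+\varepsilon/2$. This sidesteps your diagonalization entirely by absorbing the dependence on $g$ into the choice of word metric. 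Your route instead fixes the generating set, bounds $|\bar B(n)\Delta g\bar B(n)|/|\bar B(n)|$ in terms of $|\bar B(n-m)|/|\bar B(n)|$ for $m=d(e,g)$, and then exploits monotonicity of $n\mapsto|\bar B(n)|$ to build one diagonal subsequence $(\bar B(n_j))$ that works for every $g$. The payoff is an explicit F\o lner sequence of balls in a single fixed Cayley graph---a slightly stronger output, though the paper's earlier equivalence between the F\o lner property and F\o lner sequences renders the two conclusions interchangeable.
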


\begin{proof}
We use the balls under the word metric to satisfy the Følner property. 

Let $K$ be any finite subset of $G$, and fix $\varepsilon$. We
need to find a subset $F$ of $G$ such that $|F\Delta kF|<\varepsilon|F|$
for all $k\in K$. Let $A$ be a finite, symmetric generating set
such that $K\subseteq A$. Thus, $k\bar{B}(n)\subseteq\bar{B}(n+1)$.
However, $|k\bar{B}(n)|=|\bar{B}(n)|$, so we know that on the one
hand $k\bar{B}(n)\backslash\bar{B}(n)$ is a subset of $\bar{B}(n+1)\backslash\bar{B}(n)$,
and on the other hand, we observe that since in general $g(C\backslash D)=gC\backslash gD$,

it follows that $k^{-1}(\bar{B}(n)\backslash k\bar{B}(n))=k^{-1}\bar{B}(n)\backslash\bar{B}(n)$,
which is \emph{also} a subset of $\bar{B}(n+1)\backslash\bar{B}(n)$;
hence, 
\[
\frac{|\bar{B}(n)\Delta k\bar{B}(n)|}{|\bar{B}(n)|}\leq\frac{2(|\bar{B}(n+1)|-|\bar{B}(n)|)}{|\bar{B}(n)|}.
\]
Thus, it suffices to show that for a group of subexponential growth,
for arbitrary $\varepsilon$, $|\bar{B}(N+1)|/|\bar{B}(N)|<1+\varepsilon/2$
for some $N$, so that $\bar{B}(N)$ is the $F$ we're looking for.

To see this, suppose there were some $\varepsilon_{0}$ such that
for all $n$, $|\bar{B}(n+1)|/|\bar{B}(n)|>1+\varepsilon_{0}$. Then,
\[
|\bar{B}(n+1)|>(1+\varepsilon_{0})^{n}.
\]
Thus, $\log|\bar{B}(n+1)|>n\log(1+\varepsilon_{0})$, and 
\[
\lim_{n\rightarrow\infty}\frac{\log|\bar{B}(n)|}{n}>\log(1+\varepsilon_{0})>0
\]
and so $G$ now has exponential growth. 
\end{proof}
Notably, the converse to the previous proposition is false: there
\emph{are} amenable groups with exponential growth, so amenability
does not reduce to the study of the word metric. 
We will give an example of an amenable group of exponential growth
shortly. However, it will be convenient to first introduce another
equivalent characterization of amenability. 
\begin{defn}
(Boundary, $K$-boundary) Let $G$ be a group. Given a subset $F\subset G$,
we say that the \emph{boundary} of $F$ (denoted $\partial F$) is
the set of all points $g\in G$ such that $\bar{B}(g,1)\cap F\neq\emptyset$
and also $\bar{B}(g,1)\cap F^{C}\neq\emptyset$. More generally, given
a finite subset $K$ of $G$, the $K$-boundary of $F$ (denoted $\partial_{K}F$)
is the set of all points $g\in G$ such that $Kg\cap F\neq\emptyset$
and also $Kg\cap F^{C}\neq\emptyset$. (Evidently $\partial F=\partial_{\bar{B}(e,1)}F$.) 
\end{defn}

\begin{rem*}
It is sometimes helpful to note that $|\partial F|$ is at most $2$
times the number of outgoing edges from $F$ in the Cayley graph of
$G$. In fact, some sources define $\partial F$ as the set of outgoing
edges from $F$ in the Cayley graph of $G$, since the combinatorial/geometric
role of the two notions is nearly the same. 

Especially with this latter definition of $\partial F$, the choice
of the term ``boundary'' is intended to emphasize the fact that,
in the discrete geometry of a (Cayley graph of a) finitely generated
group, the set of outgoing edges from a subset $F$ really does play
a similar role to the boundary of a subset of space in a more conventional
setting. For instance, with this metaphor in hand, we can define the
\emph{isoperimetric problem }for groups, where we seek to find, for
a fixed cardinality of $\partial F$, what is the greatest possible
cardinality of $F$. (Recall that we usually think of an isoperimetric
problem as looking to maximize the volume enclosed by an oriented
surface of a given surface area.) For an \emph{isoperimetric inequality}
for groups, see Theorem 5.11 in Pete's book \cite{pete2015probability};
for a treatment of the isoperimetric problem for groups which emphasizes
the analogy with isoperimetric problems in other geometric settings,
see the book by Figalli et al. \cite{figalli2011autour}. 
\end{rem*}
\begin{prop}
\emph{(Boundary characterization of amenability) }Suppose that $G$
is a discrete group. TFAE:

\begin{enumerate}
\item G is amenable.
\item For every finite $K\subset G$ and $\varepsilon>0$, there exists
a finite $F\subset G$ such that $|\partial_{K}F|/|F|<\varepsilon$. 
\end{enumerate}
Suppose moreover that $G$ is countable. Then $(F_{n})$ is a Følner
sequence iff, for all finite $K\subset G$, $|\partial_{K}F_{n}|/|F_{n}|\rightarrow0$. 
\end{prop}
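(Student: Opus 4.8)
The plan is to prove the equivalence $(1)\Leftrightarrow(2)$ by relating the $K$-boundary of a set to its Følner quotient, and then deduce the sequential characterization as an immediate corollary in the same way that the Følner property was shown equivalent to the existence of a Følner sequence. The key observation is that $\partial_K F$ and the ``worst'' symmetric difference $|F\Delta kF|$ over $k\in K$ are comparable up to multiplicative constants depending only on $|K|$. More precisely, I would first reduce to the case where $K$ is \emph{symmetric} and contains the identity: enlarging $K$ to $K\cup K^{-1}\cup\{e\}$ only makes both conditions harder to satisfy, so it suffices to prove the equivalence for such $K$, and the general case follows since every finite set is contained in a symmetric one containing $e$.

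For the direction $(1)\Rightarrow(2)$: given symmetric $K\ni e$ and $\varepsilon>0$, apply amenability to the finite set $K$ and a parameter $\varepsilon'$ to be chosen, obtaining a finite $F$ with $|F\Delta kF|<\varepsilon'|F|$ for all $k\in K$. The point is that if $g\in\partial_K F$, then $Kg$ meets both $F$ and $F^c$; since $e\in K$ this forces a ``disagreement'' nearby, and one checks that $g\in\partial_K F$ implies $g$ lies in $\bigcup_{k\in K}(k^{-1}F\setminus F)\cup(k^{-1}F^c\setminus F^c)$ or a similar union, each term of which has size at most $|F\Delta k^{-1}F|<\varepsilon'|F|$ (using that $k^{-1}\in K$ by symmetry). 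Summing over the at most $|K|$ terms gives $|\partial_K F|<|K|\varepsilon'|F|$, so taking $\varepsilon'=\varepsilon/|K|$ finishes this direction.

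For the direction $(2)\Rightarrow(1)$: conversely, given symmetric $K\ni e$ and $\varepsilon>0$, produce $F$ with $|\partial_K F|<\varepsilon'|F|$. Here the mechanism is that if $k\in K$ and $f\in F$ with $kf\notin F$ (i.e.\ $f$ contributes to $F\setminus k^{-1}F$), then every such $f$ must lie in $\partial_K F$ or close to it — because $Kf$ contains $f\in F$ and $kf\notin F$ (and $e\in K$), so $f\in\partial_K F$ directly. Hence $|F\setminus k^{-1}F|\leq|\partial_K F|$ for each $k\in K$, and similarly $|k^{-1}F\setminus F|=|F\setminus kF|\leq|\partial_K F|$ using $k^{-1}\in K$. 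Therefore $|F\Delta kF|\leq 2|\partial_K F|<2\varepsilon'|F|$, and choosing $\varepsilon'=\varepsilon/2$ gives the Følner property.

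Finally, for the sequential statement under the countability hypothesis: the forward implication is just applying the uniform bound $|\partial_K F_n|/|F_n|<\varepsilon'|F_n|\leq |K|\cdot\max_{k\in K}|F_n\Delta kF_n|/|F_n|\to 0$ termwise for fixed $K$, and the reverse implication runs the $(2)\Rightarrow(1)$ estimate termwise, observing $|F_n\Delta kF_n|/|F_n|\leq 2|\partial_K F_n|/|F_n|\to 0$ for each $g=k$, which is exactly the Følner sequence condition. The main obstacle — though it is more bookkeeping than genuine difficulty — is getting the direction of the inclusions and the left/right translates straight; the symmetry reduction is what makes this painless, since otherwise one has to juggle $K$ against $K^{-1}$ throughout. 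No deep input beyond the definitions and the earlier equivalence of the Følner property with Følner sequences is needed.
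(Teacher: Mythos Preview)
Your argument is correct. The reduction to symmetric $K$ containing $e$, together with the two inclusions $\partial_K F\subseteq\bigcup_{k\in K}(F\Delta k^{-1}F)$ and $F\Delta k^{-1}F\subseteq\partial_K F$ (the latter for $e,k\in K$), gives exactly the two-sided comparability you need; the sequential statement then follows termwise as you indicate. One small sharpening: in the direction $(2)\Rightarrow(1)$ your two inclusions $F\setminus k^{-1}F\subseteq\partial_K F$ and $k^{-1}F\setminus F\subseteq\partial_K F$ actually give $F\Delta k^{-1}F\subseteq\partial_K F$ outright, so the constant $2$ is unnecessary---but this is cosmetic.

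As for comparison with the paper: the paper does not give its own proof of this proposition at all, but simply refers the reader to Section~I.1 of Ornstein and Weiss and to Lemma~2.6 of Pogorzelski and Schwarzenberger. Your direct elementary argument is the standard one and is essentially what one finds in those references, so there is no meaningful methodological difference to discuss---you have simply supplied what the paper chose to cite.
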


\begin{proof}
See section I.1 of Ornstein and Weiss \cite{ornstein1987entropy}
and/or lemma 2.6 of Pogorzelski and Schwarzenberger \cite{pogorzelski2016banach}. 
\end{proof}
\begin{example}
\label{exa:Baumslag}The Baumslag-Solitar group $BS(1,2)$, namely
the group on two generators characterized by the presentation $\langle a,b\mid bab^{-1}=a^{2}\rangle$,
is a well-known example of a group of exponential growth which is
solvable, and therefore amenable.\footnote{More generally, the family of Baumslag-Solitar group\emph{s} $BS(m,n):=\langle a,b\mid ba^{m}b^{-1}=a^{n}\rangle$
is a well-known family of pathological/counterexample objects. Aside
from being a solvable group of exponential growth, $BS(1,2)$ was
recently shown to be \emph{scale-invariant} \cite{nekrashevych2011scale},
thus disproving a conjecture of Itai Benjamini that scale-invariant
groups always have polynomial growth. }

\begin{figure}
\begin{centering}
\label{BS12}\includegraphics[scale=0.35]{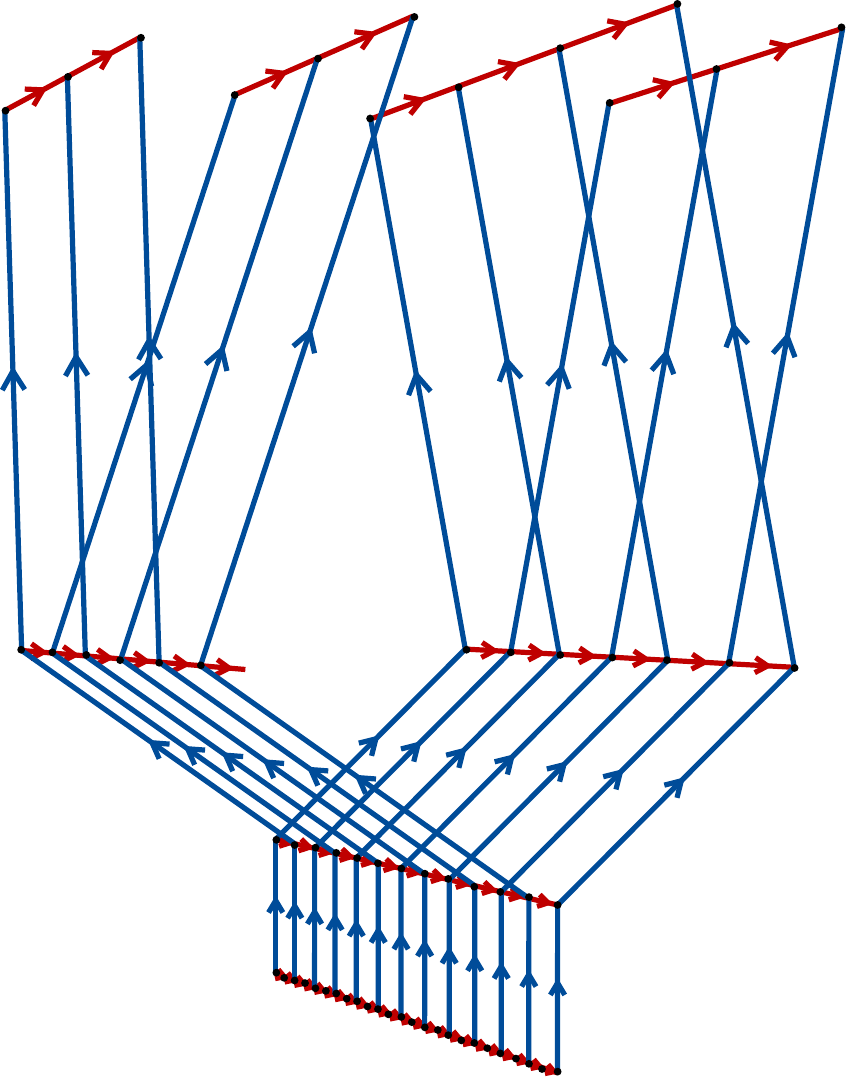}$\qquad$\includegraphics[scale=0.35]{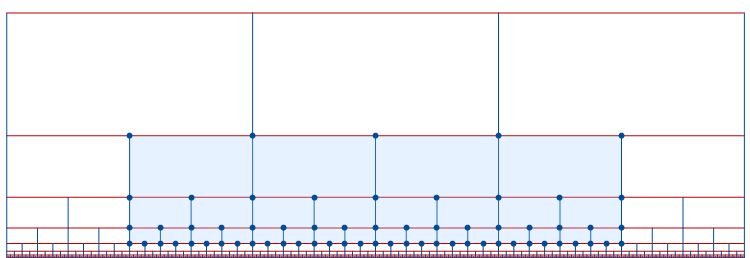}
\par\end{centering}
\caption{At left, part of the Cayley graph of $BS(1,2)$ in its standard 3D
embedding. At right, a single ``sheet'' of the group; the highlighted
region indicates the portion of the ``wide rectangle'' $R_{n,m}$
which lies in the given sheet. In both images, the colour-coding indicates
the ``coordinate system'' of $BS(1,2)$ in terms of the generators:
from a given vertex, moving up corresponds to right multiplication
by $b$, and moving right corresponds to right multiplication by $a$.
(Photo credit Jim Belk \cite{bs12}.) }
\end{figure}

One can also give an explicit description of a Følner sequence for
$BS(1,2)$; a natural way to do so in this case is to use the boundary
characterization of amenability. (Much of the discussion that follows
adheres closely to Belk's exposition \cite{bs12}, which is also our
source for the associated figure.) First, note that (the Cayley graph
of) $BS(1,2)$ has a canonical embedding in 3 space, as depicted in
Figure 2.1, which also describes the ``coordinate system'' for $BS(1,2)$
in terms of the generators $a$ and $b$. ``Rectangles'' in $BS(1,2)$
shall be defined as follows: a point $g$ belongs to the rectangle
$R_{m,n}$ if, starting from the origin, we can reach $g$ by first
traveling down $n$ edges (corresponding to right multiplication by
$b^{-n}$), then traveling left or right along at most $m$ edges
(corresponding to right multiplication by $a^{k}$ with $k\in[-m,m]$),
and then traveling up at most $2n$ edges (corresponding to right
multiplication by $b^{j}$ with $j\in[0,2n]$). Thus, a more algebraic
way to write $R_{m,n}$ is as the set 
\[
R_{m,n}:=\{b^{-n}a^{k}b^{j}\mid k\in[-m,m],j\in[0,2n]\}.
\]
It is clear that $|R_{m,n}|=(2m+1)(2n+1)$. Likewise, $R_{m,n}$ has
$(2m+1)$ boundary edges on the top and bottom ``sides''. However,
The left and right sides of $R_{m,n}$ are actually shaped like a
binary tree of height $2n+1$ (provided that $m$ is divisible by
$2^{2n}$, otherwise the sides will not ``fully branch''; in any
case this is a satisfactory upper bound), and thus the number of edges
on each side is $\sum_{j=0}^{2n}2^{j}=2^{2n+1}.$ So compute (using
the boundary edge estimate for $|\partial R_{m,n}|$) that 
\[
\frac{|\partial R_{m,n}|}{|R_{m,n}|}\leq2\frac{2(2m+1)+2(2^{2n+1})}{(2m+1)(2n+1)}.
\]
Evidently the relative boundary size will be small provided that $m$
is exponentially bigger than $n$. For instance, if we take the rectangle
$R_{2^{2k},k}$, we have that 
\[
\frac{|\partial R_{2^{2k},k}|}{|R_{2^{2k},k}|}\leq2\frac{2(2\cdot2^{2k}+1)+2(2^{2k+1})}{(2\cdot2^{2k}+1)(2k+1)}=2\frac{2}{2k+1}+2\frac{2(2\cdot2^{2k})}{(2\cdot2^{2k}+1)(2k+1)}<\frac{4}{k}.
\]
Hence $|\partial R_{2^{2k},k}|/|R_{2^{2k},k}|\longrightarrow0$, and
$(R_{2^{2k},k})$ is a Følner sequence for $BS(1,2)$. 

We can also show that $(R_{2^{2k},k})$ is a Følner sequence, in the
conventional sense. Indeed, let $g\in BS(1,2)$. As a reduced word,
$g$ corresponds to a product of $a$'s, $b$'s, $a^{-1}$s, and $b^{-1}$s;
on the Cayley graph, $gR_{2^{2k},k}$ corresponds to applying a series
of shifts right, up, left, and down respectively. For instance, $|R_{2^{2k},k}\Delta aR_{2^{2k},k}|$
is just 2 times the number of boundary edges coming out of the right
side of $R_{2^{2k},k}$ (which we already saw was $2^{2k+1}$); likewise
$|R_{2^{2k},k}\Delta bR_{2^{2k},k}|=2(2^{2k}+1)$ based on our previous
computation of the number of edges at the top side of $R_{2^{2k},k}$. 

If we apply a series of shifts to $R_{2^{2k},k}$, we can estimate
$|R_{2^{2k},k}\Delta gR_{2^{2k},k}|$ by the triangle inequality for
the symmetric difference $\Delta$: $|F\Delta h_{2}h_{1}F|\leq|F\Delta h_{1}F|+|h_{1}F\Delta h_{2}F|$.
Since a shifted copy of $R_{2^{2k},k}$ has the same combinatorial
properties as $R_{2^{2k},k}$, this implies that 
\[
|R_{2^{2k},k}\Delta gR_{2^{2k},k}|\leq2N(2^{2k+1})+2M(2^{2k}+1)
\]
where $N$ is the number of shifts in the horizontal direction, and
$M$ is the number of shifts in the vertical direction, in the reduced
word of $g$. 

To estimate $N$ and $M$, observe that the rectangles $R_{m,n}$
exhaust the group as $n,m\rightarrow\infty$. In other words there
is some rectangle $R_{m,n}$ which contains $g$. Thus $g$ can be
written in the form $b^{-n}a^{k}b^{j};\,k\in[-m,m],j\in[0,2n]$. Turning
this around slightly, if $g\in R_{m,n}$ then $N\leq2n$ and $M\leq2m$.
It follows that for all $g\in R_{m,n}$, 
\[
\frac{|R_{2^{2k},k}\Delta gR_{2^{2k},k}|}{|R_{2^{2k},k}|}\leq\frac{2n(2^{2k+1})+2m(2^{2k}+1)}{(2\cdot2^{2k}+1)(2k+1)}=\frac{2n+2m}{2k+1}.
\]
Evidently, as $k\rightarrow\infty$, $|R_{2^{2k},k}\Delta gR_{2^{2k},k}|/|R_{2^{2k},k}|\rightarrow0$. 

(We will make use of this estimate again in the next chapter.)
\end{example}

\section{A Word on Locally Compact Amenable Groups}

Thus far we have focused on amenable groups which are countable and
discrete. It is also possible to adapt the notion of amenability to
locally compact topological groups.
\begin{defn}
A locally compact topological group $(G,\tau)$ (with Haar measure
$m_{G}$) is said to be amenable if, for every \emph{compact} set
$K$, and every $\varepsilon$, there is a \emph{compact} set $F$
and a set $K_{0}\subset K$ with $m_{G}(K\backslash K_{0})<\varepsilon$,
such that for all $k\in K_{0}$, 
\[
\frac{m_{G}(F\Delta kF)}{m_{G}(F)}<\varepsilon.
\]
\end{defn}

Broadly, ``finite'' for discrete amenable groups is replaced with
``compact'', the counting measure is replaced with the Haar measure,
and countability is replaced with the assumption that the topology
is $\sigma$-compact, or equivalently is second countable. (The assumption
that $G$ is finitely generated, required in proofs which exploit
the word metric, is replaced with the assumption that $(G,\tau)$
is \emph{compactly} generated. Notably, this means that the Haar measure
of $\bar{B}(n)$ is always finite under the word metric.) With these
replacements, many proofs carry over \emph{mutatis mutandis}. For
instance, it is possible to prove that $\mathbb{R}$ is amenable in
the same way that we proved $\mathbb{Z}$ is amenable. The proof that
products of amenable groups are again amenable is identical, except
for the replacement of the counting measure $|\cdot|$ with $m_{G}$.
Topological groups which are\emph{ }compact (rather than finite) are
again trivially amenable. Since the structure theory of locally compact
abelian groups tells us that every locally compact, compactly generated
abelian group decomposes as a product $\mathbb{R}^{d}\times\mathbb{Z}^{\ell}\times K$
where $d,\ell\in\mathbb{N}$ and $K$ is compact, we see that every
finitely generated locally compact compactly generated abelian group
is amenable. And so on. 

This heuristic does not hold in utmost generality; for instance, the
boundary characterization of amenability is only valid for locally
compact groups which are \emph{unimodular}. 

Much of the translation between discrete and locally compact notions
in the theory of amenability (and geometric group theory more generally)
is folk theory, but two helpful references are the monograph by Ornstein
and Weiss \cite{ornstein1987entropy}, and the recent book by Cornulier
and de la Harpe \cite{cornulier2016metric}. 

\section{Ergodic Theory and Amenable Groups}

It is now apparent that numerous results in classical ergodic theory
\textemdash{} namely, wherein one studies the action of a single measure-preserving
transformation on a probability space \textemdash{} have natural analogues
if the action of a single transformation is replaced with the action
of an amenable group. 

To give a small amount of motivation, first observe that if we have
a measure-preserving action of $\mathbb{Z}$ on a space $(X,\mu)$,
this is precisely the same as having the action of a single invertible
measure-preserving transformation $T$, where $T^{n}x=n\cdot x$.
Likewise, a measure-preserving action of $\mathbb{Z}^{d}$ on $(X,\mu)$
can also be described as the action of $d$ distinct invertible transformations
on $(X,\mu)$, provided that all of these transformations commute
with each other. 

Likewise, a common proof technique in ergodic theory goes as follows:
we want to approximate $\frac{1}{N}\sum_{i=0}^{N-1}f\circ T^{i}$
with $\frac{1}{N}\sum_{i=0}^{N-1}(f\circ T^{k})\circ T^{i}$. To do
this, we observe that this latter sum is equal to $\frac{1}{N}\sum_{i=k}^{N-1+k}f\circ T^{i}$,
and note that if $N\gg k$ then the difference between the two sums
becomes very small, since all but $2k$-many terms cancel. Ultimately,
this exploits the fact that $[0,N-1]$ is a Følner sequence in $\mathbb{Z}$:
for every $k\in\mathbb{Z}$ and $\varepsilon>0$, we can pick an $N$
such that $|[0,N-1]\Delta k\cdot[0,N-1]|/|[0,N-1]<\varepsilon$. 

Indeed, if we have a countable discrete (or locally compact second
countable) amenable group $G$ acting on a space $(X,\mu)$, we can
define the ``amenable ergodic average'' 
\[
\frac{1}{|F_{n}|}\sum_{\gamma\in F_{n}}f\circ\gamma^{-1}\quad(G\mbox{ countable});\quad\frac{1}{m_{G}(F_{n})}\int_{F_{n}}f\circ\gamma^{-1}dm_{G}\quad(G\mbox{ second countable})
\]
where $(F_{n})$ is any Følner sequence for $G$. That this is the
right generalization of classical ergodic averages should be at least
suggested by the proof of the following theorem. 
\begin{thm}
\emph{\label{thm:(discrete amenable MET)}(Mean ergodic theorem for
countable discrete amenable groups)} Let $G$ be a countable discrete
amenable group acting by unitary transformations on a Hilbert space
$H$ via some representation $\pi$, let $(F_{n})$ be a Følner sequence
for $G$, and let $f\in H$. Let $P_{G}$ denote the orthogonal projection
to the subspace of $H$ which is invariant under the action of $\gamma$
for every $\gamma\in G$. Then, $\frac{1}{|F_{n}|}\sum_{\gamma\in F_{n}}\pi(\gamma^{-1})f$
converges to $P_{G}f$ in the norm of $H$. 

In particular, if $f\in L^{2}(X,\mu)$, it follows (from the Koopman
formalism) that $\frac{1}{|F_{n}|}\sum_{\gamma\in F_{n}}f\circ\gamma^{-1}$
converges to $P_{G}f$ in the $L^{2}$ norm. 
\end{thm}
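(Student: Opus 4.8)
The plan is to imitate the classical proof of the mean ergodic theorem, replacing the averaging over $[0,N-1]$ with averaging over a F\o lner sequence and using the defining property of a F\o lner sequence exactly where the classical proof uses the telescoping cancellation. First I would set up the orthogonal decomposition: let $H^{G}$ be the closed subspace of $G$-invariant vectors and let $P_{G}$ be the orthogonal projection onto it. Write $A_{n}f := \frac{1}{|F_{n}|}\sum_{\gamma\in F_{n}}\pi(\gamma^{-1})f$; note each $A_{n}$ is a contraction (it is an average of unitaries), so $\Vert A_{n}\Vert \leq 1$. The strategy is to prove convergence on two complementary dense-plus-closed pieces and then glue by an $\varepsilon/3$ argument using the uniform bound $\Vert A_{n}\Vert\leq 1$.

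The first piece is $H^{G}$ itself: if $f$ is $G$-invariant then $\pi(\gamma^{-1})f = f$ for every $\gamma$, so $A_{n}f = f = P_{G}f$ for all $n$, and convergence is trivial. The second piece is the subspace $W$ spanned by ``coboundaries'' $g - \pi(\gamma^{-1})g$ for $g\in H$, $\gamma\in G$; the key linear-algebra fact (the amenable analogue of the classical lemma) is that $H = H^{G}\oplus\overline{W}$ as an orthogonal direct sum. To see this one checks that $v\perp W$ iff $\langle v, g - \pi(\gamma^{-1})g\rangle = 0$ for all $g,\gamma$, i.e. iff $\langle v - \pi(\gamma)v, g\rangle = 0$ for all $g,\gamma$ (using unitarity of $\pi(\gamma)$ to move it to the other side), i.e. iff $\pi(\gamma)v = v$ for all $\gamma$, i.e. iff $v\in H^{G}$; hence $W^{\perp} = H^{G}$ and $\overline{W} = (H^{G})^{\perp}$. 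So it remains to show $A_{n}h\to 0$ for $h$ in the dense subspace $W$, and by linearity it suffices to handle a single coboundary $h = g - \pi(\delta^{-1})g$. Here comes the crucial computation:
\begin{align*}
A_{n}\bigl(g - \pi(\delta^{-1})g\bigr)
 &= \frac{1}{|F_{n}|}\sum_{\gamma\in F_{n}}\pi(\gamma^{-1})g
  - \frac{1}{|F_{n}|}\sum_{\gamma\in F_{n}}\pi(\gamma^{-1})\pi(\delta^{-1})g \\
 &= \frac{1}{|F_{n}|}\sum_{\gamma\in F_{n}}\pi(\gamma^{-1})g
  - \frac{1}{|F_{n}|}\sum_{\gamma'\in \delta F_{n}}\pi(\gamma'^{-1})g,
\end{align*}
after the substitution $\gamma' = \delta\gamma$. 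The two sums agree on the common index set $F_{n}\cap\delta F_{n}$, so the difference involves only the $|F_{n}\,\triangle\,\delta F_{n}|$ terms in the symmetric difference, each of norm $\leq\Vert g\Vert$; therefore $\Vert A_{n}(g-\pi(\delta^{-1})g)\Vert \leq \frac{2|F_{n}\triangle\delta F_{n}|}{|F_{n}|}\Vert g\Vert \to 0$ by the F\o lner property, since $\delta\in G$ is fixed.

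Finally, for general $f\in H$, decompose $f = P_{G}f + f'$ with $f'\in\overline{W}$, and given $\varepsilon>0$ pick $h\in W$ with $\Vert f' - h\Vert < \varepsilon/3$; then
\[
\Vert A_{n}f - P_{G}f\Vert \leq \Vert A_{n}f' \Vert \leq \Vert A_{n}(f'-h)\Vert + \Vert A_{n}h\Vert \leq \Vert f'-h\Vert + \Vert A_{n}h\Vert < \varepsilon/3 + \Vert A_{n}h\Vert,
\]
where I used $A_{n}P_{G}f = P_{G}f$ (invariance) and $\Vert A_{n}\Vert\leq 1$; the last term is $<2\varepsilon/3$ for $n$ large, so $\Vert A_{n}f - P_{G}f\Vert<\varepsilon$ eventually. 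The application to $L^{2}(X,\mu)$ is then immediate: the Koopman representation $\gamma\mapsto (f\mapsto f\circ\gamma^{-1})$ is unitary since $G$ acts by measure-preserving transformations, so the abstract statement applies verbatim. I expect the only genuinely substantive step to be the coboundary estimate — verifying that the symmetric-difference bound is exactly what the F\o lner condition kills — together with the orthogonality identity $W^{\perp}=H^{G}$; everything else is the standard contraction-plus-density boilerplate of mean ergodic theorems. One small point to be careful about: the F\o lner condition as stated gives $|F_{n}\triangle g F_{n}|/|F_{n}|\to 0$ for each fixed $g$, which is precisely what is needed since $\delta$ is held fixed while $n\to\infty$; no uniformity in $\delta$ is required.
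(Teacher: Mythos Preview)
Your proof is correct and is essentially the same argument as the paper's: both set up the orthogonal decomposition $H = H^{G}\oplus\overline{W}$ via the identity $W^{\perp}=H^{G}$ (using unitarity), handle invariant vectors trivially, kill coboundaries by the F{\o}lner symmetric-difference estimate, and glue by density plus the contraction bound $\Vert A_{n}\Vert\leq 1$. The only cosmetic differences are that the paper writes a general element of $\overline{W}$ as a finite linear combination of coboundaries plus a small remainder (rather than invoking linearity and then density separately), and your bound carries a harmless extra factor of $2$.
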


\begin{rem*}
In fact, the same result holds if the acting group is $\sigma$-compact
locally compact with a Haar measure rather than countable and discrete,
and this is also how the theorem is stated in Theorem 8.13 of Einsiedler
and Ward's \emph{Ergodic Theory: with a view towards Number Theory}
\cite{einsiedler2010ergodic}. Of course the version as stated above
is a special case. 

As in the common textbook proof of the von Neumann mean ergodic theorem,
it is easier to work in the more abstract setting of unitary representations
and Hilbert spaces than to work directly with an action on a measure
space. 
\end{rem*}
\begin{proof}
Suppose $f$ is $G$-invariant. Then clearly for each $n$, 
\[
\frac{1}{|F_{n}|}\sum_{\gamma\in F_{n}}\pi(\gamma^{-1})f=\frac{1}{|F_{n}|}\left(|F_{n}|\cdot f\right)=f
\]
so in this case the result holds trivially. Moreover, since the sum
of two $G$-invariant elements of $H$ is again $G$-invariant, and
likewise multiplication by a scalar respects $G$-invariance (and
the function $0$ is trivially $g$-invariant), the $G$-invariant
elements form a subspace in the Hilbert space $H$. Denote this space
by $\mathcal{I}$. 

Moreover, we readily see that $\mathcal{I}$ is closed. Let $(f_{n})$
be a sequence in $\mathcal{I}$ converging to $f$. Then, since in
general $||\pi(\gamma)f||=||f||$, 
\[
||f_{n}-\pi(\gamma)f||=||\pi(\gamma)f_{n}-\pi(\gamma)f||=||\pi(\gamma)(f_{n}-f)||=||f_{n}-f||\rightarrow0.
\]
Thus $f_{n}$ converges simultaneously to $f$ and $\pi(\gamma)f$,
and so they are equal. Hence $f$ is also $G$-invariant; so $\mathcal{I}$
is closed. 

Likewise, consider the space $\mathcal{N}$ defined by taking the
closure
{} of the subspace spanned by all points of the form $\{f-\pi(\gamma)f\}$
for all $f\in H$, $\gamma\in G$. (These are sometimes called \emph{coboundary
terms}.) We claim that this is the orthogonal complement of the space
of $G$-invariant elements. Evidently if $g$ is $G$-invariant then,
$g=\pi(\gamma)g$, so since the action of $G$ is unitary, 
\[
\langle g,f-\pi(\gamma)f\rangle=\langle g,f\rangle-\langle g,\pi(\gamma)f\rangle=\langle g,f\rangle-\langle\pi(\gamma)g,\pi(\gamma)f\rangle=0
\]
so by a density argument, if $h\in\mathcal{N}$ then $\langle g,h\rangle\leq||g||\varepsilon$
for every $\varepsilon$, hence $g\in\mathcal{N}^{\bot}$. Thus $\mathcal{N}^{\bot}$
contains the invariant subspace $\mathcal{I}$. Conversely, suppose
that for all $f\in H$, $\langle g,f-\pi(\gamma)f\rangle=0$. Then
$\langle g,f\rangle=\langle g,\pi(\gamma)f\rangle$. Since the action
of $G$ is unitary, it also holds that $\langle\pi(\gamma^{-1})g,f\rangle=\langle g,\pi(\gamma)f\rangle$.

But in a Hilbert space, 
\[
[\forall f.\langle g,f\rangle=\langle\pi(\gamma^{-1})g,f\rangle]\implies g=\pi(\gamma^{-1})g.
\]
Equivalently, $g=\pi(\gamma)g$. Thus, if we now quantify over all
$\gamma\in G$, we see that 
\[
[\forall\gamma.\forall f.\langle g,f-\pi(\gamma)f\rangle=0]\implies\forall\gamma.g=\pi(\gamma)g.
\]
Hence, if $g$ is orthogonal to the spanning set $\{f-\pi(\gamma)f\}$
generating $\mathcal{N}$ (and therefore, $g\in\mathcal{N}^{\bot}$)
then $g\in\mathcal{I}$. Thus $\mathcal{N}^{\bot}$ is the $G$-invariant
subspace. In particular we have $H=\mathcal{I}\oplus\mathcal{N}$.

Pick any $f$ in this subspace, i.e. any function of the form $\sum_{j=1}^{k}c_{j}(g_{j}-\pi(\gamma_{j}^{-1})g_{j})+g_{\varepsilon}$
where $||g_{\varepsilon}||<\varepsilon$. 
\begin{align*}
||\frac{1}{|F_{n}|}\sum_{\gamma\in F_{n}}\pi(\gamma^{-1})f|| & =||\frac{1}{|F_{n}|}\sum_{\gamma\in F_{n}}\sum_{j=1}^{k}c_{j}(\pi(\gamma^{-1})g_{j}-\pi((\gamma_{j}\gamma)^{-1})g_{j}+\frac{1}{|F_{n}|}\sum_{\gamma\in F_{n}}\pi(\gamma^{-1})g_{\varepsilon})||\\
 & \leq||\frac{1}{|F_{n}|}\sum_{j=1}^{k}c_{j}\left(\sum_{\gamma\in F_{n}}\pi(\gamma^{-1})g_{j}-\sum_{\beta\in\gamma_{k}F_{n}}\pi(\beta^{-1})g_{j}\right)||+\frac{1}{|F_{n}|}\sum_{\gamma\in F_{n}}||\pi(\gamma^{-1})g_{\varepsilon}||\\
 & \leq\frac{1}{|F_{n}|}\sum_{j=1}^{k}c_{j}\left(\sum_{\gamma\in F_{n}\Delta\gamma_{k}F_{n}}||\pi(\gamma^{-1})g_{j}||\right)+||g_{\varepsilon}||\\
 & \leq\frac{1}{|F_{n}|}\sum_{j=1}^{k}c_{j}\left(\sum_{\gamma\in F_{n}\Delta\gamma_{k}F_{n}}||g_{j}||\right)+\varepsilon\\
 & =\sum_{j=1}^{k}c_{j}\frac{|F_{n}\Delta\gamma_{k}F_{n}|}{|F_{n}|}||g_{j}||+\varepsilon
\end{align*}
By amenability, we can pick $N\in\mathbb{N}$ such that for all $n\geq N$,
and every $j=1,\ldots,k$, 
\[
\frac{|F_{n}\Delta\gamma_{k}F_{n}|}{|F_{n}|}<\frac{\varepsilon}{\sum_{j=1}^{k}c_{j}||g_{j}||}
\]
so that $||\frac{1}{|F_{n}|}\sum_{\gamma\in F_{n}}\pi(\gamma^{-1})f||<2\varepsilon$. 

Using orthogonal decomposition, we then take any $f\in H$ and uniquely
write it as the sum $P_{G}f+f_{\bot}$ where $P_{G}f$ is the projection
to the $G$-invariants and $f_{\bot}\in\mathcal{N}$. In turn, for
any $\varepsilon$ we can always decompose $f_{\bot}=\sum_{j=1}^{k}c_{j}(g_{j}-\pi(\gamma_{j}^{-1})g_{j})+g_{\varepsilon}$
with $||g_{\varepsilon}||<\varepsilon$. Then by the previous calculation,
\begin{align*}
||P_{G}f-\frac{1}{|F_{n}|}\sum_{\gamma\in F_{n}}\pi(\gamma^{-1})f|| & \leq||P_{G}f-\frac{1}{|F_{n}|}\sum_{\gamma\in F_{n}}\pi(\gamma^{-1})(P_{G}f)||+||\frac{1}{|F_{n}|}\sum_{\gamma\in F_{n}}\pi(\gamma^{-1})f_{\bot}||\\
 & \leq\sum_{j=1}^{k}c_{j}\frac{|F_{n}\Delta\gamma_{j}F_{n}|}{|F_{n}|}||g_{j}||+\varepsilon\\
\implies\lim_{n\rightarrow\infty}||P_{G}f-\frac{1}{|F_{n}|}\sum_{\gamma\in F_{n}}\pi(\gamma^{-1})f|| & <2\varepsilon.
\end{align*}
Finally, we send $\varepsilon$ to zero. 
\end{proof}
The reader should observe that the preceding proof is nearly word-for-word
identical with the common proof of the von Neumann mean ergodic theorem,
except that the sequence $[0,n)$ of intervals in $\mathbb{Z}$ has
been replaced with a Følner sequence, and the projection onto the
$T$-invariant (equivalently, $\mathbb{Z}$-invariant!) subspace is
now a projection onto the $G$-invariant subspace. 

As in the classical setting, one can show that there is no uniform
rate of convergence in the amenable mean ergodic theorem \textemdash{}
in fact one can show something stronger, namely that for any \emph{fixed}
amenable group there is no uniform rate of convergence. However, this
is actually a case where the machinery of amenable groups allows for
a significantly streamlined argument. 
\begin{thm}
\label{thm:Amenable-rate}Given a locally compact second countable
amenable group $G$, there exists a Hilbert space $H$ and an action
of $G$ on $H$ via unitary representation $\pi$, such that for any
Følner sequence $(F_{n})$ on $G$, there is no uniform rate of convergence
for the family of sequences $\left\{ \frac{1}{m_{G}(F_{n})}\int_{F_{n}}\pi(\gamma^{-1})fdm_{G};f\in H\right\} $.
\end{thm}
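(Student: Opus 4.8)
The plan is to let $(H,\pi)$ be the \emph{left regular representation} of $G$: take $H=L^{2}(G,m_{G})$ with $(\pi(g)h)(x)=h(g^{-1}x)$, a strongly continuous unitary representation on a separable Hilbert space. This single pair $(H,\pi)$ will defeat every F{\o}lner sequence at once. One should assume here that $G$ is non-compact (an implicit hypothesis: for compact $G$ one may take $F_{n}\equiv G$, which makes $\frac{1}{m_{G}(F_{n})}\int_{F_{n}}\pi(\gamma^{-1})f\,dm_{G}$ identically equal to $P_{G}f$, so a uniform rate trivially exists). Non-compactness forces $m_{G}(G)=\infty$, hence $L^{2}(G)$ has no nonzero $G$-invariant vector, so the projection $P_{G}$ of Theorem \ref{thm:(discrete amenable MET)} is zero; by that theorem and its locally compact extension (the remark following it), $A_{n}f:=\frac{1}{m_{G}(F_{n})}\int_{F_{n}}\pi(\gamma^{-1})f\,dm_{G}\to 0$ for every $f\in H$.

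The heart of the matter is a single claim: \emph{for every F{\o}lner sequence $(F_{n})$, every index $N$, and every $\delta>0$ there is a unit vector $f$ with $\Vert A_{N}f\Vert>1-\delta$}. The idea is that amenability supplies almost-invariant unit vectors, and averaging an operator over a set that nearly fixes $f$ barely moves $f$. Explicitly, apply the definition of amenability to the compact set $K:=F_{N}\cup F_{N}^{-1}$ with a small parameter $\eta$ to obtain a compact $F$ with $m_{G}(F\Delta gF)/m_{G}(F)<\eta$ for all $g\in K$ outside a set of $m_{G}$-measure $<\eta$, and put $f:=m_{G}(F)^{-1/2}\mathbf{1}_{F}$. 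Then $\pi(\gamma^{-1})f=m_{G}(F)^{-1/2}\mathbf{1}_{\gamma^{-1}F}$, so $\Vert\pi(\gamma^{-1})f-f\Vert^{2}=m_{G}(F\Delta\gamma^{-1}F)/m_{G}(F)$, which is $<\eta$ for all $\gamma\in F_{N}$ except a set of measure $\le C_{K}\eta$, where $C_{K}$ bounds the modular function on $K$ (in the discrete case $C_{K}=1$ and there is no exceptional set). Since $A_{N}f-f=m_{G}(F_{N})^{-1}\int_{F_{N}}(\pi(\gamma^{-1})f-f)\,dm_{G}$, the triangle inequality for vector-valued integrals gives $\Vert A_{N}f-f\Vert\le\sqrt{\eta}+2C_{K}\eta/m_{G}(F_{N})$, which is $<\delta$ once $\eta$ is small, whence $\Vert A_{N}f\Vert>1-\delta$.

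Granting the claim, the theorem follows at once. Suppose $r:\mathbb{R}^{+}\to\mathbb{N}$ were a uniform rate of convergence for $\{(A_{n}f)_{n}:f\in H\}$; fix $\varepsilon=1/3$ and $N=r(1/3)$, use the claim to pick a unit vector $f$ with $\Vert A_{N}f\Vert>2/3$, and then (using $A_{m}f\to 0$) choose $m\ge N$ with $\Vert A_{m}f\Vert<1/3$. Then $\Vert A_{N}f-A_{m}f\Vert>1/3$ with $N,m\ge r(1/3)$, contradicting the definition of a rate. Since $f$ is a unit vector this already rules out a uniform rate over the unit ball of $H$, which is the nontrivial statement — over all of $H$ a rate fails for the cheap reason that multiplying $f$ by a large constant scales up the whole sequence $(A_{n}f)_{n}$.

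I do not anticipate a genuine obstacle: the whole argument rests on choosing the regular representation and on the fact that amenability is precisely what provides almost-invariant vectors (while non-compactness removes the truly invariant ones), after which everything is bookkeeping. The only places calling for care are the measure-$\eta$ exceptional set in the locally compact definition of amenability and the modular factor relating $m_{G}(F\Delta\gamma^{-1}F)$ to $m_{G}(F\Delta\gamma F)$ — both harmless because $K=F_{N}\cup F_{N}^{-1}$ is a fixed compact set — together with the need to notice that non-compactness of $G$ is indispensable, since otherwise the statement is simply false.
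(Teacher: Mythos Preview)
Your approach is essentially the same as the paper's: both take $(H,\pi)$ to be the left regular representation on $L^{2}(G,m_{G})$ and exploit the existence of almost-invariant unit vectors (normalized indicators of very invariant compact sets) to show that for any prescribed index $N$ one can find $f$ with $\Vert A_{N}f-f\Vert$ arbitrarily small, hence $\Vert A_{N}f\Vert$ close to $1$ while the limit is $0$.

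The only cosmetic difference is that the paper takes $f=\tfrac{1}{2}(\bar{\mathbf 1}_{B}-\bar{\mathbf 1}_{Bk})$ with $B\cap Bk=\emptyset$ (using non-compactness to find such a $k$) and then bounds $\Vert A_{N}f-f\Vert$ via Jensen and Fubini, whereas you take $f=\bar{\mathbf 1}_{F}$ directly, argue that non-compactness forces $P_{G}=0$, and bound $\Vert A_{N}f-f\Vert$ by the triangle inequality for the vector-valued integral. Your explicit flagging of the non-compactness hypothesis and of the modular-function bookkeeping for the exceptional set is, if anything, a bit cleaner than the paper's treatment.
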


\begin{proof}
A convenient choice of $H$ and $\pi$ is $L^{2}(G,m_{G})$ with precomposition
by left-multiplication (i.e. $\pi(g)f(x):=f(gx)$). Let $(\alpha_{n})$
be a decreasing sequence of positive reals encoding a rate of convergence.
Without loss of generality, $\alpha_{n}<1$ for all $n$. Ultimately,
given an $n\in\mathbb{N}$, it suffices to find an $f\in L^{2}(G)$
such that $\Vert A_{n}f-P_{G}f\Vert_{2}>\alpha_{n}$. 

First, given any measurable subset $B$ of $G$, we define the normalized
characteristic function $\bar{\mathbf{1}}_{B}:=\mathbf{1}_{B}/\left(m_{G}(B)\right)^{1/2}$,
so that $\Vert\bar{\mathbf{1}}_{B}\Vert_{2}=1$. 

First, notice that for a fixed $h\in G$, 
\[
\Vert\pi(h^{-1})\bar{\mathbf{1}}_{B}-\bar{\mathbf{1}}_{B}\Vert_{2}^{2}=\int_{G}\left(\frac{\bar{\mathbf{1}}_{B}(h^{-1}g)-\bar{\mathbf{1}}_{B}(g)}{\left(m_{G}(B)\right)^{1/2}}\right)^{2}dm_{G}(g)\leq\frac{m_{G}(h^{-1}B\Delta B)}{m_{G}(B)}.
\]
Now, given $F_{n}$ and $\varepsilon>0$ (with $\alpha_{n}<1-\varepsilon$),
we can use the Følner property to find some compact $B$ and a subset
$F_{n}^{\prime}\subset F_{n}$ such that for all $h\in F_{n}^{\prime}$,
{} 
\[
\frac{m_{G}(h^{-1}B\Delta B)}{m_{G}(B)}<\varepsilon^{2}\cdot m_{G}(F_{n})/9;\quad m_{G}(F_{n}\backslash F_{n}^{\prime})<\varepsilon^{2}\cdot m_{G}(F_{n})/9.
\]
(To be picky, the Følner property actually tells us that $\frac{m_{G}(hB\Delta B)}{m_{G}(B)}<\varepsilon^{2}\cdot m_{G}(F_{n})/9$.
However, left invariance tells us that $m_{G}(hB\Delta B)=m_{G}(B\Delta h^{-1}B)$.) 

Additionally, let $k$ be a ``large enough'' element of $G$ so
that $B$ and $Bk$ are disjoint. (Such a $k$ always exists since
$B$ is compact and $G$ is not.)
{} Write 
\[
f=\frac{1}{2}\left(\bar{\mathbf{1}}_{B}-\bar{\mathbf{1}}_{Bk}\right)
\]
so that $\int fdm_{G}=0$ but $\Vert f\Vert_{2}=1$. Notably, since
$G$ acts ergodically on itself (!), we know that $A_{n}f\stackrel{L^{2}(G)}{\longrightarrow}\int fdm_{G}$.
Therefore, it suffices to show that $\Vert A_{n}f-f\Vert_{2}<\varepsilon$,
since this implies $\Vert A_{n}f-0\Vert_{2}>1-\varepsilon>\alpha_{n}$.

Observe that 
\[
\Vert\pi(h^{-1})f-f\Vert_{2}\leq\frac{1}{2}\Vert\pi(h^{-1})\bar{\mathbf{1}}_{B}-\bar{\mathbf{1}}_{B}\Vert_{2}+\frac{1}{2}\Vert\pi(h^{-1})\bar{\mathbf{1}}_{Bk}-\bar{\mathbf{1}}_{Bk}\Vert_{2}
\]
and that if $\delta(k)$ denotes the Haar modular character for $m_{G}$,
\[
\frac{m_{G}(h^{-1}Bk\Delta Bk)}{m_{G}(Bk)}=\frac{\delta(k)m_{G}(h^{-1}B\Delta B)}{\delta(k)m_{G}(B)}=\frac{m_{G}(h^{-1}B\Delta B)}{m_{G}(B)}
\]
so \emph{both }$\Vert\pi(h^{-1})\bar{\mathbf{1}}_{B}-\bar{\mathbf{1}}_{B}\Vert_{2}$
and $\Vert\pi(h^{-1})\bar{\mathbf{1}}_{Bk}-\bar{\mathbf{1}}_{Bk}\Vert_{2}$
are less than $\varepsilon\cdot\sqrt{m_{G}(F_{n})}/3$, and therefore
\[
\Vert\pi(h^{-1})f-f\Vert_{2}<\varepsilon\cdot\sqrt{m_{G}(F_{n})}/3.
\]

Now, compute that 
\begin{align*}
\Vert A_{n}f-f\Vert_{2}^{2} & =\int_{G}\left(\frac{1}{m_{G}(F_{n})}\int_{F_{n}}f(h^{-1}g)dm_{G}(h)-f(g)\right)^{2}dm_{G}(g)\\
 & =\int_{G}\left(\frac{1}{m_{G}(F_{n})}\int_{F_{n}}\left(f(h^{-1}g)-f(g)\right)dm_{G}(h)\right)^{2}dm_{G}(g)\\
\mbox{(Jensen)} & \leq\int_{G}\frac{1}{\left(m_{G}(F_{n})\right)^{2}}\int_{F_{n}}\left(f(h^{-1}g)-f(g)\right)^{2}dm_{G}(h)dm_{G}(g)\\
\mbox{(Fubini)} & =\frac{1}{\left(m_{G}(F_{n})\right)^{2}}\int_{F_{n}}\int_{G}\left(f(h^{-1}g)-f(g)\right)^{2}dm_{G}(g)dm_{G}(h).
\end{align*}
We split $F_{n}$ into $F_{n}^{\prime}$ and $F_{n}\backslash F_{n}^{\prime}$.
On $F_{n}^{\prime}$, we know that $\Vert\pi(h^{-1})f-f\Vert_{2}^{2}<\varepsilon^{2}\cdot m_{G}(F_{n})/9$,
and on $F_{n}\backslash F_{n}^{\prime}$ we use the crude bound $\Vert\pi(h^{-1})f-f\Vert_{2}^{2}\leq2\Vert f\Vert_{2}^{2}=2$.
Hence 
\begin{align*}
\int_{F_{n}}\int_{G}\left(f(h^{-1}g)-f(g)\right)^{2}dm_{G}(g)dm_{G}(h) & <\int_{F_{n}^{\prime}}\varepsilon^{2}\cdot m_{G}(F_{n})/9dm_{G}(h)+\int_{F_{n}\backslash F_{n}^{\prime}}2dm_{G}(h)\\
 & <\varepsilon^{2}m_{G}(F_{n})^{2}/9+2\varepsilon^{2}m_{G}(F_{n})/9.
\end{align*}
Consequently, $\Vert A_{n}f-f\Vert_{2}^{2}<\varepsilon^{2}/9+2\varepsilon^{2}/(3m_{G}(F_{n}))$.
Since $m_{G}(F_{n})\rightarrow\infty$ for any Følner sequence, without
loss of generality $m_{G}(F_{n})\geq1$, so that $\Vert A_{n}f-f\Vert_{2}^{2}<7\varepsilon^{2}/9$
and thus $\Vert A_{n}f-f\Vert_{2}<\varepsilon$. 

It is worth noting that, \emph{mutatis mutandis}, the same argument
works if we replace the exponent $2$ with any $p\in[1,\infty)$. 
\end{proof}
\begin{rem*}
(for the reader who is familiar with Kazhdan groups and the like)
The previous proof is essentially ``just'' an application of the
fact that the left contravariant action of $G$ on $L^{2}(G)$ admits
\emph{almost-invariant vectors} provided that $G$ is amenable. It
is \emph{not} a coincidence that such a proof does not go through
for Kazhdan groups, which \emph{never} have almost-invariant vectors
in this setting. Indeed the spectral gap characterization of Kazhdan
groups can sometimes be exploited to give a uniform rate of convergence
for a mean ergodic theorem (see for instance Gorodnik and Nevo's survey
article \cite{gorodnik2015quantitative}). 
\end{rem*}
A large enough portion of classical ergodic theory has now been ``amenable-ized''
(including, notably, the entire machinery of Ornstein isomorphism
theory \cite{ornstein1987entropy}) that it is tempting to form the
heuristic that given any theorem involving a measure-preserving $\mathbb{Z}$-action,
there will be \emph{some} analogous theorem where $\mathbb{Z}$ is
replaced with an amenable group. However, it is worth remarking that
many \emph{proofs} in classical ergodic theory do \emph{not} adapt
to the amenable setting as readily as in the preceding proof of the
mean ergodic theorem, nor does the amenable setting always offer us
a ``nicer'' proof as in the preceding proof of the lack of a uniform
rate of convergence for the amenable MET. 

To give a concrete example, one of the standard proofs of the Birkhoff
ergodic theorem (given, for instance, in Einsiedler and Ward's book)
proves the maximal ergodic theorem via a Vitali covering argument
on $\mathbb{Z}$, and then combines the maximal ergodic theorem and
the mean ergodic theorem to deduce pointwise a.s convergence. It so
happens that in the countable discrete setting, the same Vitali covering
argument generalized naturally to an action of any group $G$ which
has \emph{polynomial growth }(a large subclass of amenable groups,
identical by a result of Gromov to the class of all virtually nilpotent
groups), but fails to generalize directly to all amenable groups;
and the proof of the pointwise ergodic theorem for \emph{arbitrary}
second countable amenable groups, due to Lindenstrauss, ultimately
relies on a novel and sophisticated replacement for the Vitali covering
argument. 

In some notable cases, the best known generalization of a result in
classical ergodic theory only covers a very small sub-class of amenable
groups: for instance, the best generalization of the Kingman subadditive
ergodic theorem that the author is aware of \cite{dooley2014sub}
only works for countable amenable groups $G$ which are strongly scale-invariant
in the sense of Nekrashevych and Pete \cite{nekrashevych2011scale}
(briefly, this implies that there exists an increasing Følner sequence
$(F_{n})$ such that each $F_{n}$ tiles $G$ and such that, in the
Cayley graph category, $\pi_{F_{n}}(G)$ is isomorphic to $G$) \emph{and}
only for Følner sequences which satisfy the Tempelman condition (which
do not exist for every amenable group), \emph{and} provided that an
additional technical side-condition is satisfied. 

\chapter{Fluctuation bounds}

\section{Introduction}

Consider the following version of the mean ergodic theorem for actions
of amenable groups:
\begin{thm*}
\emph{(Greenleaf }\cite{greenleaf1973ergodic}\emph{)} Let $L^{p}(S,\mu)$
be such that either $S$ is $\sigma$-finite and $1<p<\infty$ or
$\mu(S)<\infty$ and $p=1$, and let $x\in L^{p}(S,\mu)$. Let $G$
be a locally compact second countable amenable group with Haar measure
$dg$, let $G$ act continuously on $(S,\mu)$ by measure preserving
transformations, and let $(F_{n})$ be a Følner sequence of compact
subsets of $G$. Then $A_{n}x:=\frac{1}{|F_{n}|}\int_{F_{n}}\pi(g^{-1})x$
converges in $L^{p}$. 
\end{thm*}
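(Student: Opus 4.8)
The plan is to follow the proof of the Hilbert-space mean ergodic theorem for amenable groups given earlier in the excerpt, with orthogonality replaced by $L^{p}$--$L^{q}$ duality. First I would note that each $\pi(g)$ is an isometry of $L^{p}(S,\mu)$ (the transformations are measure preserving) and that $g\mapsto\pi(g^{-1})x$ is a continuous $L^{p}$-valued function, so $A_{n}x=\frac{1}{m_{G}(F_{n})}\int_{F_{n}}\pi(g^{-1})x\,dm_{G}(g)$ is a well-defined Bochner integral over the compact set $F_{n}$ and $\Vert A_{n}\Vert_{\mathrm{op}}\leq1$ for every $n$. By the usual $\varepsilon/3$ argument, together with completeness of $L^{p}$, it then suffices to show that $(A_{n}x)$ converges for all $x$ in some dense subspace.

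The candidate dense subspace is $\mathcal{I}+\mathcal{N}$, where $\mathcal{I}=\{x\in L^{p}:\pi(g)x=x\text{ for all }g\in G\}$ and $\mathcal{N}$ is the closed linear span of the coboundaries $\{y-\pi(h)y:y\in L^{p},\,h\in G\}$. On $\mathcal{I}$ one has $A_{n}x=x$. On a coboundary $y-\pi(h)y$, exactly the computation from the Hilbert-space proof applies: writing $\pi(g^{-1})\pi(h)=\pi(g^{-1}h)$ and substituting $g\mapsto hg$ (left invariance of Haar measure), $A_{n}(y-\pi(h)y)$ is $m_{G}(F_{n})^{-1}$ times an integral of $\pm\pi(u^{-1})y$ over $F_{n}\,\Delta\,h^{-1}F_{n}$, so its norm is at most $\frac{m_{G}(F_{n}\,\Delta\,h^{-1}F_{n})}{m_{G}(F_{n})}\Vert y\Vert_{p}$, which tends to $0$ by the defining property of a F\o lner sequence. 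Since $\Vert A_{n}\Vert_{\mathrm{op}}\leq1$, this extends from basic coboundaries to all of $\mathcal{N}$, where $A_{n}\to0$; hence $(A_{n}x)$ converges for every $x\in\mathcal{I}+\mathcal{N}$ (the limit being the component of $x$ in $\mathcal{I}$, which is well defined since $\mathcal{I}\cap\mathcal{N}=\{0\}$).

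The substantive step --- and the one I expect to be the main obstacle, since in the Hilbert case it is merely the orthogonal decomposition $H=\mathcal{I}\oplus\mathcal{N}$ --- is proving that $\mathcal{I}+\mathcal{N}$ is dense in $L^{p}$, and this is where the hypotheses on $p$ and $\mu$ are used. For $1<p<\infty$ I would argue by Hahn--Banach: suppose $\phi\in L^{q}\cong(L^{p})^{*}$ (with $\frac{1}{p}+\frac{1}{q}=1$) annihilates $\mathcal{I}+\mathcal{N}$. Annihilating $\mathcal{N}$ means $\langle\phi,\pi(h)y\rangle=\langle\phi,y\rangle$ for all $y,h$, i.e. $\pi(h)^{*}\phi=\phi$ for every $h$; since each $\pi(h)$ is the Koopman operator of a measure-preserving transformation, so is each $\pi(h)^{*}$, and (using continuity of the action and second countability of $G$ to pick a conull set good for all $h$ at once) $\phi$ is a $G$-invariant element of $L^{q}$. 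Now $|\phi|^{q-1}\,\mathrm{sgn}(\phi)$ lies in $L^{p}$, because $(q-1)p=q$, and it is again $G$-invariant, hence lies in $\mathcal{I}$; as $\phi$ annihilates $\mathcal{I}$ we get $0=\langle\phi,|\phi|^{q-1}\mathrm{sgn}(\phi)\rangle=\Vert\phi\Vert_{q}^{q}$, so $\phi=0$ and density follows. For the remaining case $p=1$, $\mu(S)<\infty$, one has $q=\infty$; a $G$-invariant $\phi\in L^{\infty}$ then also lies in $L^{2}\subseteq L^{1}$ by finiteness of $\mu$, hence $\phi\in\mathcal{I}$, and annihilation of $\mathcal{I}$ forces $\Vert\phi\Vert_{2}^{2}=0$. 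Combining the uniform bound, convergence on $\mathcal{I}+\mathcal{N}$, and density of $\mathcal{I}+\mathcal{N}$ completes the proof, and identifies the limit of $(A_{n}x)$ as the mean ergodic projection of $x$ onto the invariant subspace.
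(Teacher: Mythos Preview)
Your argument is correct, but it is not the route the paper takes. The paper does not give its own proof of Greenleaf's theorem in full generality; it merely cites Greenleaf, remarks that his proof proceeds via an abstract fixed point argument, and then proves only the uniformly convex case (hence $L^{p}$ with $1<p<\infty$, not $L^{1}$) by an entirely different method: a generalization of Garrett Birkhoff's argument, in which the modulus of uniform convexity is used directly to show that each $\varepsilon$-fluctuation of $(A_{n}x)$ forces a definite drop in $\Vert A_{k}x\Vert$ for later $k$, so $\Vert A_{n}x\Vert\to\inf_{n}\Vert A_{n}x\Vert$ and $(A_{n}x)$ is Cauchy.

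What you do instead is extend the invariants--plus--coboundaries decomposition that the paper carries out for Hilbert spaces in Theorem~\ref{thm:(discrete amenable MET)}, replacing orthogonal complementation by the Hahn--Banach/$L^{p}$--$L^{q}$ duality argument to establish density of $\mathcal{I}+\mathcal{N}$. This has the advantage of covering the $p=1$, $\mu(S)<\infty$ case, which the paper's uniform convexity proof cannot reach. Conversely, the paper's Birkhoff-style argument is chosen precisely because it yields explicit fluctuation bounds (the main point of the chapter): your density step is obtained nonconstructively via Hahn--Banach, so, like Greenleaf's original fixed point proof, it gives no effective convergence information.
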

Greenleaf proves this result by way of an abstract Banach space analogue
of the mean ergodic theorem which is simultaneously general enough
to deduce the mean ergodic theorem for an amenable group acting on
any reflexive Banach space or any $L^{1}(\mu)$ with $\mu$ a finite
measure. Central to Greenleaf's proof is a fixed point argument which
in particular does not give any effective convergence information
about the averages $A_{n}x$. 

Here our aim is to give an effective analogue of Greenleaf's theorem.
At the cost of some generality \textemdash{} here, we only consider
actions of amenable groups on \emph{uniformly convex} Banach spaces
\textemdash{} we obtain an explicit fluctuation bound for $(A_{n}x)$.

\section{Preliminaries}

We first fix some notation and terminology. 

A locally compact group $G$ will always come equipped with a Haar
measure, at least tacitly. In the countable discrete case this coincides
with the counting measure. Regardless of whether the group is discrete
or continuous, we will use the notations $dg$ and $\vert\cdot\vert$
interchangeably to refer to the Haar measure. 

A normed vector space $(\mathcal{B},\Vert\cdot\Vert)$ is said to
be uniformly convex if there exists a nondecreasing function $u(\varepsilon)$
such that for all $x,y\in\mathcal{B}$ with $\Vert x\Vert\leq\Vert y\Vert\leq1$
and $\Vert x-y\Vert\geq\varepsilon$, it follows that $\Vert\frac{1}{2}(x+y)\Vert<\Vert y\Vert-u(\varepsilon)$.
Such a function $u(\varepsilon)$ is then referred to as a \emph{modulus
of uniform convexity }for $\mathcal{B}$. 

In general, we say that a group $G$ acts on a normed vector space
$(\mathcal{B},\Vert\cdot\Vert)$ if there is a function $\pi(g)$
that returns an operator on $\mathcal{B}$ for every $g\in G$, $\pi(e)$
is the identity operator, and for all $g,h\in G$, $\pi(g)\pi(h)=\pi(gh)$.
Together these imply that $\pi(g)^{-1}=\pi(g^{-1})$. We say that
$G$ \emph{acts linearly on} $\mathcal{B}$ provided that in addition,
$\pi$ maps from $G$ to the space $\mathcal{L}(\mathcal{B},\mathcal{B})$
of linear operators on $\mathcal{B}$. 
Writing $\mathcal{L}_{1}(\mathcal{B},\mathcal{B})$ to indicate the
set of all linear operators from $\mathcal{B}$ to $\mathcal{B}$
with supremum norm $1$, another way to say that $G$ acts both linearly
and with unit norm on $\mathcal{B}$ is to say that $G$ acts on $\mathcal{B}$
via $\pi:G\rightarrow\mathcal{L}_{1}(\mathcal{B},\mathcal{B})$.\footnote{We remark that any group that acts via a representation $\pi:G\rightarrow\mathcal{L}(\mathcal{B},\mathcal{B})$
such that every $\pi(g)$ is \emph{nonexpansive} actually does so
via $\pi:G\rightarrow\mathcal{L}_{1}(\mathcal{B},\mathcal{B})$, by
the fact that $\pi(g^{-1})=\pi(g)^{-1}$ and the general fact about
linear operators that $\Vert T^{-1}\Vert\geq\Vert T\Vert^{-1}$. Nonexpansivity
is required for the proof of our main result.} Likewise, we say that a topological group $G$ \emph{acts continuously
on }$\mathcal{B}$ provided that for every $x\in\mathcal{B}$, if
$g\rightarrow e$ then $\Vert\pi(g)x-x\Vert\rightarrow0$. In other
words $g\mapsto\pi(g)x$ is continuous from $G$ to $\mathcal{B}$.
In the case where $G$ also acts linearly (resp. and with unit norm)
on $\mathcal{B}$, this is equivalent to requiring that $\pi:G\rightarrow\mathcal{L}(\mathcal{B},\mathcal{B})$
(resp. $\pi:G\rightarrow\mathcal{L}_{1}(\mathcal{B},\mathcal{B})$)
is continuous when $\mathcal{L}(\mathcal{B},\mathcal{B})$ is equipped
with the strong operator topology. 

Finally, we say that if $G$ is understood as a measurable space,
then $G$ \emph{acts strongly on $\mathcal{B}$} provided that for
every $x\in\mathcal{B}$, $g\mapsto\pi(g)x$ is strongly measurable
from $G$ to $\mathcal{B}$ (see Appendix A). In the case where $G$
also acts linearly (resp. and with unit norm) on $\mathcal{B}$, this
is equivalent to requiring that $\pi:G\rightarrow\mathcal{L}(\mathcal{B},\mathcal{B})$
(resp. $\pi:G\rightarrow\mathcal{L}_{1}(\mathcal{B},\mathcal{B})$)
is strongly measurable when $\mathcal{L}(\mathcal{B},\mathcal{B})$
is equipped with the strong operator topology. It is this very last
condition \textemdash{} $\pi:G\rightarrow\mathcal{L}_{1}(\mathcal{B},\mathcal{B})$
is strongly measurable when $\mathcal{L}(\mathcal{B},\mathcal{B})$
is equipped with the strong operator topology \textemdash{} that we
will actually use in our proof. To be briefer, we will say that $G$
acts strongly on $\mathcal{B}$ via the representation $\pi:G\rightarrow\mathcal{L}_{1}(\mathcal{B},\mathcal{B})$. 

For the convenience of the reader we recall some basic facts about
vector-valued integration. All of these can be found in, for example,
the recent textbook by Hytönen et al. \cite{hytonen2016analysis}.
\begin{prop}
(1) If $\int_{A}f(g)dg$ is either the Bochner or the Pettis integral,
then $\Vert\int_{A}f(g)dg\Vert\leq\int_{A}\Vert f(g)\Vert dg$.

(2) If $\int_{A}f(g)dg$ is either the Bochner or the Pettis integral,
and $T$ is a bounded linear operator, then $T(\int_{A}f(g)dg)=\int_{A}Tf(g)dg$.

(3) If $dg$ is $\sigma$-finite then Fubini's theorem holds for the
Bochner integral. 

(4) A strongly measurable function $f:G\rightarrow\mathcal{B}$ is
Bochner integrable iff $\int_{G}\Vert f(g)\Vert dg<\infty$, in other
words iff $\Vert f\Vert:G\rightarrow\mathbb{R}$ is integrable in
the Lebesgue sense.
\end{prop}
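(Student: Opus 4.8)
The plan is to derive all four facts from the definition of the Bochner integral as an $L^{1}$-limit of integrals of simple functions, together with a Hahn--Banach reduction to scalar-valued integrals in the Pettis case. The recurring template is: verify the assertion for a simple function, where it reduces to an elementary fact about finite sums or about the scalar Lebesgue integral; then choose simple functions $s_{n}$ with $\int\Vert f-s_{n}\Vert\to0$ and pass to the limit, using that $\int s_{n}\to\int f$ and, wherever norms appear, that $\bigl|\Vert s_{n}(g)\Vert-\Vert f(g)\Vert\bigr|\le\Vert s_{n}(g)-f(g)\Vert$ forces $\int\Vert s_{n}\Vert\to\int\Vert f\Vert$.

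For (1), the simple-function case of $\Vert\int_{A}s_{n}\Vert\le\int_{A}\Vert s_{n}\Vert$ is the triangle inequality, and the limit passes as above; for the Pettis integral one instead picks, by Hahn--Banach, a norm-one functional $\varphi$ with $\varphi(\int_{A}f)=\Vert\int_{A}f\Vert$ and reads off $\Vert\int_{A}f\Vert=\int_{A}\varphi(f(g))\,dg\le\int_{A}\Vert f(g)\Vert\,dg$ from the defining property of the Pettis integral. For (2), $T(\int_{A}s_{n})=\int_{A}Ts_{n}$ is linearity of $T$ on finite sums, and one passes to the limit using continuity of $T$ on one side and $\int_{A}\Vert Tf-Ts_{n}\Vert\le\Vert T\Vert\int_{A}\Vert f-s_{n}\Vert\to0$ on the other; for the Pettis integral, apply an arbitrary functional $\psi$ on the target space and observe $\psi(T\int_{A}f)=(\psi\circ T)(\int_{A}f)=\int_{A}\psi(Tf(g))\,dg$, which is precisely the Pettis-integral identity for $\int_{A}Tf$. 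For (4), the forward direction is $\int_{G}\Vert f\Vert\le\int_{G}\Vert f-s_{n}\Vert+\int_{G}\Vert s_{n}\Vert<\infty$; for the converse, strong measurability supplies simple $\sigma_{n}\to f$ a.e., and the truncations $s_{n}:=\sigma_{n}\mathbf{1}_{\{\Vert\sigma_{n}\Vert\le2\Vert f\Vert\}}$ remain simple, still converge to $f$ a.e., and satisfy $\Vert f-s_{n}\Vert\le3\Vert f\Vert\in L^{1}$, so dominated convergence gives $\int_{G}\Vert f-s_{n}\Vert\to0$.

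Item (3) is where I expect the real work to lie. The overall shape is the same --- reduce to simple functions on the product space, where the vector-valued Fubini identity is just finitely many instances of the scalar Fubini theorem, and $\sigma$-finiteness is exactly what that scalar theorem requires, then pass to the limit --- but the bookkeeping is genuinely delicate: one must check that for a.e.\ fixed $h$ the section $g\mapsto f(g,h)$ is strongly measurable and Bochner integrable, that the partially integrated function $h\mapsto\int f(g,h)\,dg$ is again strongly measurable, and that the iterated integrals of $\Vert f\Vert$ are finite so the limit argument is legitimate. These measurability points rest on the a.e.\ separable-valuedness built into the Pettis measurability theorem together with item (1) to control norms, and assembling them carefully --- rather than any single deep idea --- is the main obstacle; items (1), (2), (4) are essentially bookkeeping layered directly on the definition.
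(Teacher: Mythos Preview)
Your proof sketch is correct and follows the standard textbook development of the Bochner integral. However, the paper does not prove this proposition at all: it is stated as a collection of well-known background facts, with a citation to Hyt\"onen et al.\ for details, and no proof is given. So there is nothing to compare against; you have supplied a proof where the paper simply appeals to the literature.
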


In what follows, therefore, every $\mathcal{B}$-valued integral is
understood to be a Bochner integral, and every $\mathbb{R}$-valued
integral is understood to be a Lebesgue integral. 

The following serves as our preferred characterization of amenability. 
\begin{defn}
(1) Let $G$ be a countable discrete group. A sequence $(F_{n})$
of finite subsets of $G$ is said to be a Følner sequence if for every
$\varepsilon>0$ and finite $K\subset G$, there exists an $N$ such
that for all $n\geq N$ and for all $k\in K$, $|F_{n}\Delta kF_{n}|<|F_{n}|\varepsilon$. 

(2) Let $G$ be a locally compact second countable (lcsc) group with
Haar measure $|\cdot|$. A sequence $(F_{n})$ of compact subsets
of $G$ is said to be a Følner sequence if for every $\varepsilon>0$
and compact $K\subset G$, there exists an $N$ such that for all
$n\geq N$, there exists a subset $K^{\prime}$ of $K$ with $|K^{\prime}|>(1-\varepsilon)|K|$
such that for all $k\in K^{\prime}$, $|F_{n}\Delta kF_{n}|<|F_{n}|\varepsilon$.

\end{defn}

\begin{rem*}
It has been observed, for instance, by Ornstein and Weiss \cite{ornstein1987entropy}
that (2) is one of several equivalent ``correct'' generalizations
of (1) to the lcsc setting. Note however, that we do not assume $(F_{n})$
is nested ($F_{i}\subset F_{i+1}$ for all $i\in\mathbb{N}$) or exhausts
$G$ ($\bigcup_{n\in\mathbb{N}}F_{n}=G$), nor do we assume, in the
lcsc case, that $G$ is unimodular. (Each of these is a common additional
technical assumption when working with amenable groups.) Conversely,
some authors use a version of (2) where the sets in $(F_{n})$ are
merely assumed to have finite volume, rather than compact; thanks
to the regularity of the Haar measure, our definition results in no
loss of generality. 
\end{rem*}
\begin{defn}
If $G$ is either a countable discrete or lcsc amenable group, and
has some distinguished Følner sequence $(F_{n})$, and acts on $\mathcal{B}$
via a representation $\pi:G\rightarrow\mathcal{L}_{1}(\mathcal{B},\mathcal{B})$,
then we define the $n$th \emph{ergodic average }operator as follows:
$A_{n}x:=\frac{1}{|F_{n}|}\int_{F_{n}}\pi(g^{-1})xdg$. 
\end{defn}

\begin{prop}
With the notation above, $\Vert A_{n}\Vert_{\mathcal{L}(\mathcal{B},\mathcal{B})}\leq1$.
\end{prop}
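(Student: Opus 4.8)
The plan is to reduce everything to the two basic properties of the Bochner integral recorded in the preceding Proposition, together with the hypothesis that $\pi$ takes values in $\mathcal{L}_{1}(\mathcal{B},\mathcal{B})$, i.e. that each $\pi(g)$ is nonexpansive. Fix $x\in\mathcal{B}$ and $n\in\mathbb{N}$. First I would check that the defining integral $\int_{F_{n}}\pi(g^{-1})x\,dg$ actually exists as a Bochner integral: by assumption $g\mapsto\pi(g^{-1})$ is strongly measurable into $\mathcal{L}_{1}(\mathcal{B},\mathcal{B})$ equipped with the strong operator topology, so $g\mapsto\pi(g^{-1})x$ is a strongly measurable $\mathcal{B}$-valued function; and since $\Vert\pi(g^{-1})x\Vert\leq\Vert\pi(g^{-1})\Vert\,\Vert x\Vert\leq\Vert x\Vert$ for every $g$, while $F_{n}$ is compact and hence has finite Haar measure, we get $\int_{F_{n}}\Vert\pi(g^{-1})x\Vert\,dg\leq|F_{n}|\,\Vert x\Vert<\infty$. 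By part (4) of the Proposition the function is Bochner integrable, so $A_{n}x$ is well defined.

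Next I would note that $A_{n}$ is a linear operator, which is immediate: each $\pi(g^{-1})$ is linear and the Bochner integral is linear in its integrand, so $A_{n}(\alpha x+\beta y)=\alpha A_{n}x+\beta A_{n}y$ for scalars $\alpha,\beta$ and $x,y\in\mathcal{B}$.

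For the norm bound, I would apply part (1) of the Proposition to the Bochner integral and then use nonexpansivity once more:
\[
\Vert A_{n}x\Vert=\frac{1}{|F_{n}|}\left\Vert\int_{F_{n}}\pi(g^{-1})x\,dg\right\Vert\leq\frac{1}{|F_{n}|}\int_{F_{n}}\Vert\pi(g^{-1})x\Vert\,dg\leq\frac{1}{|F_{n}|}\int_{F_{n}}\Vert x\Vert\,dg=\Vert x\Vert.
\]
Taking the supremum over $\Vert x\Vert\leq1$ then yields $\Vert A_{n}\Vert_{\mathcal{L}(\mathcal{B},\mathcal{B})}\leq1$.

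There is essentially no genuine obstacle here: the only point requiring any care is the measurability-and-integrability check needed to make sense of $A_{n}x$ in the first place, and that is handled entirely by the strong-measurability hypothesis on the action together with part (4) of the cited Proposition. Everything else is the triangle inequality for the Bochner integral (part (1)) and the nonexpansivity built into the definition of $\mathcal{L}_{1}(\mathcal{B},\mathcal{B})$.
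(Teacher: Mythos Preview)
Your proof is correct and follows essentially the same approach as the paper: both use the triangle inequality for the Bochner integral followed by the nonexpansivity of $\pi(g^{-1})$ to obtain $\Vert A_{n}x\Vert\leq\Vert x\Vert$. You simply add the (welcome) preliminary verification that $A_{n}x$ is well defined and linear, which the paper leaves implicit.
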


\begin{proof}
Observe that

\[
\Vert A_{n}x\Vert:=\left\Vert \frac{1}{|F_{n}|}\int_{F_{n}}\pi(g^{-1})xdg\right\Vert \leq\frac{1}{|F_{n}|}\int_{F_{n}}\Vert\pi(g^{-1})x\Vert dg\leq\frac{1}{|F_{n}|}\int_{F_{n}}\Vert x\Vert dg=\Vert x\Vert.
\]
\end{proof}
\begin{rem*}
To tie all this abstraction back to our original setting of interest,
we should note that in Appendix A, it is shown that if $G$ acts continuously
on $\mathcal{B}$, then $G$ acts strongly on $\mathcal{B}$. Consequently,
the ``concrete'' version of Greenleaf's mean ergodic theorem, where
$G$ acts continuously and by measure-preserving transformations on
a $\sigma$-finite measure space $(S,\mu)$, and $f\in L^{p}$ with
$p\in(1,\infty)$ (equivalently: the induced action of $G$ on $L^{p}(S,\mu)$
is a continuous action by linear isometries) so in particular $G$
acts via a unitary representation $\pi:G\rightarrow\mathcal{L}_{1}(\mathcal{B},\mathcal{B})$
which is continuous in the strong operator topology. It follows that
studying an ``abstract version'' where $\mathcal{B}$ is an arbitrary
uniformly convex Banach space and $G$ acts strongly on $\mathcal{B}$
via the representation $\pi:G\rightarrow\mathcal{L}_{1}(\mathcal{B},\mathcal{B})$
is, in fact, a bona fide generalization of the concrete version. 
\end{rem*}
A key piece of quantitative information for us will be how large $N$
has to be if $K$ is chosen to be an element of $(F_{n})$. This information
is encoded by the following type of modulus:
\begin{defn}
Let $G$ be an amenable group, either countable discrete or lcsc,
with Følner sequence $(F_{n})$. A \emph{Følner convergence modulus}
$\beta(n,\varepsilon)$ for $(F_{n})$ returns an integer $N$ such
that:

\begin{enumerate}
\item If $G$ is countable discrete, $(\forall m\geq N)(\forall g\in F_{n})\left[|F_{m}\Delta gF_{m}|<|F_{m}|\varepsilon\right]$.
\item If $G$ is lcsc, $(\forall m\geq N)(\exists F_{n}^{\prime}\subset F_{n})(\forall g\in F_{n}^{\prime})\left[|F_{n}\backslash F_{n}^{\prime}|<|F_{n}|\varepsilon\wedge|F_{m}\Delta gF_{m}|<|F_{m}|\varepsilon\right]$.
\end{enumerate}
\end{defn}

We remark that if $(F_{n})$ is an \emph{increasing} Følner sequence
(that is, $F_{n}\subset F_{m}$ for all $n\leq n$) then it follows
trivially that $\beta(n,\varepsilon)$ is a nondecreasing function
for any fixed $\varepsilon$. However, in what follows we do not always
assume that $(F_{n})$ is increasing. In some instances it is technically
convenient to assume that $\beta(n,\varepsilon)$ is non-decreasing;
in this case, we can upper bound $\beta(n,\varepsilon)$ using an
``envelope'' of the form $\tilde{\beta}(n,\varepsilon)=\max_{1\leq i\leq n}\beta(n,\varepsilon)$.
Hence, in any case we are free to assume that $\beta(n,\varepsilon)$
is non-decreasing in $n$ if necessary. 
\begin{example}
\emph{Computing some Følner convergence moduli. }\textcolor{red}{\emph{}}
\emph{}

\begin{enumerate}
\item Consider $\mathbb{Z}^{2}$ equipped with the Følner sequence composed
of the symmetric squares $[-m,m]^{2}$. If we shift such a square
by an element $(n_{1},n_{2})\in[-m,m]^{2}$, then the symmetric difference
between $[-m,m]^{2}$ and $(n_{1},n_{2})[-m,m]^{2}$ has cardinality
$2(2m+1)|n_{1}|+2(2m+1-|n_{1}|)|n_{2}|$. This quantity increases
with both $|n_{1}|$ and $|n_{2}|$. Suppose then that $(n_{1},n_{2})$
is taken from a 2-cube $[-n,n]^{2}$. Then the symmetric difference
is maximized when $n_{1}=n_{2}=n$ and 
\[
\frac{|[-m,m]^{2}\Delta(n,n)[-m,m]^{2}|}{|[-m,m]|^{2}}=\frac{4(2m+1)n-2n^{2}}{(2m+1)^{2}}<\frac{4n}{2m+1}
\]
Therefore if we pick $m\geq\frac{n}{2\varepsilon}$, it follows that
for all $(n_{1},n_{2})$ in the square $[-n,n]^{2}$, then $|[-m,m]^{2}\Delta(n_{1},n_{2})[-m,m]^{2}|<|[-m,m]^{2}|\varepsilon$.
Hence we can take $\beta(n,\varepsilon)=\lceil\frac{n}{2\varepsilon}\rceil$.
A similar computation for $d$-dimensional symmetric cubes in $\mathbb{Z}^{d}$
indicates that we can take $\beta(m,\varepsilon)\leq\lceil\frac{n}{2^{d-1}\varepsilon}\rceil.$

\item A slightly more interesting case is the solvable Baumslag-Solitar
group $BS(1,2)=\langle a,b\mid bab^{-1}=a^{2}\rangle$. We saw this
group in Example \ref{exa:Baumslag}, where we observed that it has
a Følner sequence of the form $(R_{2^{2k},k})$, where in general
$R_{m,n}$ denotes a rectangular subset of the form $\{b^{-n}a^{k}b^{j}\mid k\in[-m,m],j\in[0,2n]\}$.
We also observed that for all $g\in R_{m,n}$, 
\[
\frac{|R_{2^{2k},k}\Delta gR_{2^{2k},k}|}{|R_{2^{2k},k}|}\leq\frac{2n+2m}{2k+1}.
\]
Thus, if $g\in R_{2^{2j},j}$ (with $j\leq k$), we have 
\[
\frac{|R_{2^{2k},k}\Delta gR_{2^{2k},k}|}{|R_{2^{2k},k}|}\leq\frac{2\cdot2^{2j}+2j}{2k+1}.
\]
So given $\varepsilon>0$, in order for $|R_{2^{2k},k}\Delta gR_{2^{2k},k}|/|R_{2^{2k},k}|$
to be less than $\varepsilon$, it suffices to pick $k$ sufficiently
large that $(2\cdot2^{2j}+2j)/(2k+1)<\varepsilon$, in other words,
\[
\frac{2^{2j}+j}{\varepsilon}-\frac{1}{2}<k.
\]
Consequently, for the Følner sequence $(R_{2^{2k},k})$ on $BS(1,2)$,
we have that $\beta(j,\varepsilon)=\lceil\frac{2^{2j}+j}{\varepsilon}\rceil$
is a valid Følner convergence modulus.
\end{enumerate}
It is worth noting that under some reasonable assumptions, it is easy
to see that we can select a Følner sequence in such a way that $\beta(n,\varepsilon)$
can be chosen to be a computable function (for an appropriate restriction
on the domain of the second variable). The following argument has
essentially already been observed by previous authors \cite{cavaleri2017computability,cavaleri2018folner,moriakov2018effective}
working with slightly different objects, but we include it for completeness.
\end{example}

\begin{prop}
Let $G$ be a countable discrete finitely generated amenable group
with the solvable word property. Fix $k\in\mathbb{N}$. Then $G$
has a Følner sequence $(F_{n})$ such that $\beta(n,k^{-1})=\max\{n+1,k\}$
is a Følner convergence modulus for $(F_{n})$. Moreover $(F_{n})$
can be chosen in a computable fashion.
\end{prop}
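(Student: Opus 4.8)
The plan is to construct $(F_{n})$ by recursion, choosing at each stage a set that is simultaneously \emph{very} invariant under a fixed finite generating set and \emph{large enough} (and containing enough) both to dominate the previously built sets and to exhaust $G$; decidability of the word problem will make each stage effective.

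First I would fix a finite symmetric generating set $S$ and a computable injective enumeration $g_{1},g_{2},\dots$ of $G$ (available since the word problem is solvable), and record two elementary facts. (a) \emph{Sub-additivity along words:} if $F$ is $\delta$-invariant under $S$, meaning $|F\Delta sF|<\delta|F|$ for every $s\in S$, and $g=s_{1}\cdots s_{\ell}$ with $\ell=d(e,g)$, then iterating $|F\Delta(h_{1}h_{2})F|\le|F\Delta h_{1}F|+|h_{1}F\Delta h_{1}h_{2}F|=|F\Delta h_{1}F|+|F\Delta h_{2}F|$ (the last equality factoring $h_{1}$ on the left, which preserves cardinality and $\Delta$) gives $|F\Delta gF|<\ell\,\delta\,|F|$ whenever $\ell\ge1$. (b) In an infinite finitely generated group any $\delta$-invariant $F$ has $|F|>1/\delta$, since some generator provides an outgoing edge and hence $|F\Delta sF|\ge1$ for that $s$. (If $G$ is finite the proposition is trivial with $F_{n}=G$, so assume $G$ infinite.)

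Now the recursion. Put $F_{1}:=\{g_{1}\}$. Given an increasing chain $F_{1}\subseteq\cdots\subseteq F_{m-1}$, set $L:=\max\{d(e,g):g\in F_{m-1}\}$ (computable from the word problem), $\delta_{m}:=\frac{1}{2k(L+1)}$, and let $F_{m}$ be the first finite subset of $G$, in the standard enumeration of finite subsets induced by $(g_{n})$, satisfying (i) $F_{m-1}\cup\{g_{m}\}\subseteq F_{m}$ and (ii) $|F_{m}\Delta sF_{m}|<\delta_{m}|F_{m}|$ for all $s\in S$. Such an $F_{m}$ exists: writing $A:=F_{m-1}\cup\{g_{m}\}$, use amenability to pick $F'$ that is $\delta'$-invariant under $S$ with $\delta'$ small enough that $\delta'<\delta_{m}/2$ and, by (b), $|F'|>1/\delta'>4|A|/\delta_{m}$; then $F_{m}:=F'\cup A$ works, because adjoining $A$ alters each $|F'\Delta sF'|$ by at most $2|A|$ while $|F_{m}|\ge|F'|$, so $\tfrac{|F_{m}\Delta sF_{m}|}{|F_{m}|}\le\delta'+\tfrac{2|A|}{|F'|}<\delta_{m}$. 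Conditions (i) and (ii) are decidable — equality of group elements, hence symmetric differences, cardinalities, and the function $d(e,\cdot)$, are all computable from a word-problem oracle — so the search terminates and the whole construction is a uniform algorithm; this is the computability claim.

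Finally I would verify the two properties. The sequence is increasing and $\bigcup_{n}F_{n}=G$ because $g_{m}\in F_{m}$, hence $|F_{n}|\to\infty$, hence $L\to\infty$ along the recursion and $\delta_{n}\to0$; for a finite $K$ with $\ell:=\max_{h\in K}d(e,h)$, fact (a) gives $|F_{n}\Delta hF_{n}|<\ell\delta_{n}|F_{n}|$, which is $<\varepsilon|F_{n}|$ for all large $n$ and all $h\in K$, so $(F_{n})$ is a F{\o}lner sequence. For the modulus: if $m\ge\max\{n+1,k\}$ then $m-1\ge n$, so $F_{n}\subseteq F_{m-1}$ and every $g\in F_{n}$ has $d(e,g)\le L_{m-1}$; if $g=e$ then $|F_{m}\Delta gF_{m}|=0$, and otherwise fact (a) applied to $F_{m}$ yields $|F_{m}\Delta gF_{m}|<L_{m-1}\,\delta_{m}\,|F_{m}|=\frac{L_{m-1}}{2k(L_{m-1}+1)}|F_{m}|<\frac1k|F_{m}|$, which is exactly the requirement for $\beta(n,k^{-1})=\max\{n+1,k\}$. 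The only step with genuine content is the simultaneous containment-plus-invariance at each stage, handled by the thickening argument above; I expect that step, together with checking that $\delta_{n}\to0$ forces the F{\o}lner property, to be where the (modest) care is needed, everything else being bookkeeping.
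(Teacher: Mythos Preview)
Your argument is correct, but it takes a genuinely different route from the paper's. The paper proceeds more directly: given $F_{n-1}$, it takes $F_{n}$ to be the first finite set (in a fixed computable enumeration of finite subsets) containing $F_{n-1}$ such that $|F_{n}\Delta gF_{n}|<|F_{n}|/n$ for every $g\in F_{n-1}$. The modulus then falls out in one line: if $m\geq\max\{n+1,k\}$ then $F_{n}\subseteq F_{m-1}$, so for $g\in F_{n}$ we have $|F_{m}\Delta gF_{m}|<|F_{m}|/m\leq|F_{m}|/k$. There is no appeal to a generating set, no sub-additivity along words, and no thickening step.

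By contrast, you only impose invariance under the generating set $S$ at each stage and then recover invariance under $F_{m-1}$ via the word-length bound $|F\Delta gF|<\ell\,\delta\,|F|$; this forces you to track $L_{m-1}$ and to tune $\delta_{m}$ accordingly, and it requires the union-with-$A$ argument to guarantee containment. What your approach buys is an explicit exhaustion $g_{m}\in F_{m}$, which makes the verification that $(F_{n})$ is F{\o}lner entirely clean; the paper's verification asserts that every $g$ eventually lies in some $F_{m}$, which its bare construction does not obviously force. What the paper's approach buys is brevity: the invariance needed for the modulus is imposed directly rather than deduced, so no auxiliary estimates are required. A minor remark on your version: since your bound $|F_{m}\Delta gF_{m}|<\tfrac{1}{2k}|F_{m}|$ holds already for all $m\geq n+1$, the second branch of $\max\{n+1,k\}$ is not actually used in your verification, whereas in the paper's argument it is precisely the inequality $1/m\leq 1/k$ that does the work.
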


\begin{proof}
Fix a computable enumeration of the finite subsets of $G$. The solvable
word property ensures that we can do this, and also that the cardinality
of $F\Delta gF$ can always be computed for any $g\in G$ and finite
set $F$. So, take $F_{1}$ to be an arbitrary finite set. Given $F_{n-1}$,
take $F_{n}$ to be the least (with respect to the enumeration) finite
subset of $G$ containing $F_{n-1}$, such that for all $g\in F_{n-1}$,
$|F_{n}\Delta gF_{n}|<|F_{n}|/n$. Such an $F_{n}$ exists since $G$
is amenable. This is indeed a Følner sequence: for a fixed $g$, we
see that $|F_{n}\Delta gF_{n}|<|F_{n}|/n$ for all $n$ greater than
the first $m$ such that $g\in F_{m}$, hence $|F_{n}\Delta gF_{n}|/|F_{n}|\rightarrow0$.
Moreover, we see that if $m\geq\max\{n+1,k\}$, then 
\[
(\forall g\in F_{n})\qquad|F_{m}\Delta gF_{m}|<|F_{m}|/m\leq|F_{m}|/k.
\]
\end{proof}
\begin{rem*}
The previous proposition is not sharp. It has been shown that there
are groups \emph{without} the solvable word property which nonetheless
have computable Følner sequences with computable convergence behaviour
\cite{cavaleri2018folner}. (The cited paper uses a different explicit
modulus of convergence for Følner sequences than the present paper,
although the argument carries over to our setting without modification.) 
\end{rem*}

\section{The Main Theorem}

Frequently in ergodic theory, one argues that if $K\gg N$, then $A_{K}A_{N}x\approx A_{K}x$.
The following lemma makes this precise in terms of the modulus $\beta$. 
\begin{lem}
Let $(\mathcal{B},\Vert\cdot\Vert)$ be a normed vector space. Let
$G$ be a lcsc amenable group with Følner sequence $(F_{n})$, and
let $G$ act strongly on $\mathcal{B}$ via the representation $\pi:G\rightarrow\mathcal{L}_{1}(\mathcal{B},\mathcal{B})$.
Fix $N\in\mathbb{N}$ and $\eta>0$. Let $\beta$ be the Følner convergence
modulus and suppose $K\geq\beta(N,\eta)$. Then for any $x\in\mathcal{B}$,
$\Vert A_{K}x-A_{K}A_{N}x\Vert<3\eta\Vert x\Vert$. (If $G$ is countable
discrete, strong measurability is trivially satisfied, and we have
the sharper estimate $\Vert A_{K}x-A_{K}A_{N}x\Vert<\eta\Vert x\Vert$.)
\end{lem}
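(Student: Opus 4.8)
The plan is to expand $A_K x - A_K A_N x$ using the definitions of the ergodic averages and the linearity of $\pi$, then to recognize the resulting quantity as an average over $F_N$ of differences between a translate of $A_K x$ and $A_K x$ itself, which are small precisely because $K \geq \beta(N,\eta)$. First I would write
\[
A_K A_N x = \frac{1}{|F_N|}\int_{F_N} A_K\bigl(\pi(h^{-1})x\bigr)\, dh,
\]
using that $A_K$ is a bounded linear operator and hence commutes with the Bochner integral defining $A_N x$ (by part (2) of the vector-valued integration facts, together with Fubini, part (3), which applies since the Haar measure is $\sigma$-finite). Next I would observe that $A_K \pi(h^{-1}) x = \frac{1}{|F_K|}\int_{F_K}\pi(g^{-1})\pi(h^{-1})x\,dg = \frac{1}{|F_K|}\int_{F_K}\pi((hg)^{-1})x\,dg$, and change variables $g' = hg$ to rewrite this as $\frac{1}{|F_K|}\int_{h F_K}\pi(g'^{-1})x\,dg'$ — i.e., $A_K\pi(h^{-1})x$ is the ergodic average over the translated Følner set $h F_K$ instead of $F_K$. (In the discrete case this change of variables is exact with counting measure; in the lcsc case one must carry a modular factor, but since both averages are normalized by the Haar measure of the respective sets and $|hF_K| = \delta(h)|F_K|$ while the integral also picks up $\delta(h)$, the normalization is unaffected — this is the same cancellation used in the proof of Theorem \ref{thm:Amenable-rate}.)

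With this in hand, $A_K x - A_K A_N x = \frac{1}{|F_N|}\int_{F_N}\bigl(A_K x - \widetilde{A}_{h,K} x\bigr)\,dh$ where $\widetilde{A}_{h,K}x$ denotes the average over $hF_K$. The difference $A_K x - \widetilde{A}_{h,K}x$ equals $\frac{1}{|F_K|}\int_{F_K \Delta hF_K}\pm\pi(g^{-1})x\,dg$, whose norm is at most $\frac{|F_K \Delta hF_K|}{|F_K|}\Vert x\Vert$ since every $\pi(g^{-1})$ is nonexpansive. Now invoke the F\o lner convergence modulus: in the discrete case, $K \geq \beta(N,\eta)$ guarantees $|F_K \Delta hF_K| < \eta|F_K|$ for \emph{every} $h \in F_N$, so averaging over $h \in F_N$ gives $\Vert A_K x - A_K A_N x\Vert < \eta\Vert x\Vert$ directly. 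In the lcsc case the modulus only controls $h$ in a subset $F_N' \subseteq F_N$ with $|F_N \setminus F_N'| < \eta|F_N|$; on $F_N'$ the integrand has norm $< \eta\Vert x\Vert$, and on $F_N \setminus F_N'$ we use the crude bound $\Vert A_K x - \widetilde{A}_{h,K}x\Vert \leq 2\Vert x\Vert$ (both terms have norm $\leq \Vert x\Vert$ since $\Vert A_K\Vert \leq 1$). Splitting the average over $F_N$ accordingly yields $\Vert A_K x - A_K A_N x\Vert < \eta\Vert x\Vert + 2\eta\Vert x\Vert = 3\eta\Vert x\Vert$, as claimed.

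The only genuinely delicate point is the justification of the interchange of $A_K$ with the integral defining $A_N$ and the subsequent change of variables in the lcsc, non-unimodular setting — one must be careful that the strong measurability hypothesis on $\pi$ legitimizes treating $h \mapsto \pi(h^{-1})x$ and $(g,h) \mapsto \pi((hg)^{-1})x$ as Bochner integrable (guaranteed since these maps are strongly measurable and bounded in norm by $\Vert x\Vert$ on the compact sets $F_K$, $F_N$, hence Bochner integrable by part (4)), and that the modular character cancels in the normalization. Everything else is the triangle inequality, nonexpansivity, and the defining property of $\beta$; I expect no obstacle there.
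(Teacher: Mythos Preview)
Your argument is essentially identical to the paper's: expand the definitions, apply Fubini to swap the $F_K$ and $F_N$ integrals, change variables $g'=hg$ to rewrite the inner integral as an average over $hF_K$, bound the difference by $\frac{|F_K\Delta hF_K|}{|F_K|}\Vert x\Vert$, and then split $F_N$ into $F_N'$ and $F_N\setminus F_N'$ to get $\eta\Vert x\Vert + 2\eta\Vert x\Vert$. One small correction: since the Haar measure is \emph{left}-invariant, the substitution $g'=hg$ introduces no modular factor at all and $|hF_K|=|F_K|$ exactly, so your remark about $\delta(h)$ appearing and cancelling is unnecessary (and in fact incorrect as stated), though the conclusion is unaffected.
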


\begin{proof}
From the definition of Følner convergence modulus, we know that there
exists an $F_{N}^{\prime}\subset F_{N}$ such that $|F_{N}^{\prime}|<(1-\eta)|F_{N}|$
and such that for all $h\in F_{N}^{\prime}$, $|F_{K}\Delta hF_{K}|<|F_{K}|\eta$.
Now perform the following computation (justification for each step
addressed below): 
\[
\Vert A_{K}x-A_{K}A_{N}x\Vert:=\left\Vert \frac{1}{|F_{K}|}\int_{F_{K}}\pi(g^{-1})xdg-\frac{1}{|F_{K}|}\int_{F_{K}}\pi(g^{-1})\left(\frac{1}{|F_{N}|}\int_{F_{N}}\pi(h^{-1})xdh\right)dg\right\Vert 
\]
\begin{align*}
 & =\left\Vert \frac{1}{|F_{K}|}\int_{F_{K}}\pi(g^{-1})xdg-\frac{1}{|F_{K}||F_{N}|}\int_{F_{K}}\left(\int_{F_{N}}\pi(g^{-1})(\pi(h^{-1})x)dh\right)dg\right\Vert \\
 & =\left\Vert \frac{1}{|F_{K}|}\int_{F_{K}}\pi(g^{-1})xdg-\frac{1}{|F_{K}||F_{N}|}\int_{F_{K}}\left(\int_{F_{N}}\pi((hg)^{-1})xdh\right)dg\right\Vert \\
 & =\left\Vert \frac{1}{|F_{N}|}\int_{F_{N}}\left(\frac{1}{|F_{K}|}\int_{F_{K}}\pi(g^{-1})xdg\right)dh-\frac{1}{|F_{N}||F_{K}|}\int_{F_{N}}\left(\int_{F_{K}}\pi((hg)^{-1})xdg\right)dh\right\Vert \\
 & \leq\frac{1}{|F_{N}|}\int_{F_{N}}\left\Vert \frac{1}{|F_{K}|}\int_{F_{K}}\pi(gg^{-1})xdg-\frac{1}{|F_{K}|}\int_{F_{K}}\pi((hg)^{-1})xdg\right\Vert dh\\
 & =\frac{1}{|F_{N}|}\int_{F_{N}}\left\Vert \frac{1}{|F_{K}|}\int_{F_{K}}\pi(g^{-1})x-\frac{1}{|F_{K}|}\int_{hF_{K}}\pi(g^{-1})xdg\right\Vert dh\\
 & \leq\frac{1}{|F_{N}|}\int_{F_{N}}\left(\frac{1}{|F_{K}|}\int_{F_{K}\Delta hF_{K}}\Vert\pi(g^{-1})x\Vert dg\right)dh\\
 & \leq\frac{1}{|F_{N}|}\int_{F_{N}}\left(\frac{1}{|F_{K}|}\int_{F_{K}\Delta hF_{K}}\Vert x\Vert dg\right)dh\\
 & =\frac{1}{|F_{N}|}\int_{F_{N}}\frac{1}{|F_{K}|}\left(|F_{K}\Delta hF_{K}|\Vert x\Vert\right)dh\\
 & <\frac{1}{|F_{N}|}\left[\int_{F_{N}^{\prime}}\eta\Vert x\Vert dh+\int_{F_{N}\backslash F_{N}^{\prime}}\left(\frac{1}{|F_{K}|}|F_{K}\Delta hF_{K}|\Vert x\Vert\right)dh\right]\\
 & \leq\eta\Vert x\Vert+\frac{1}{|F_{N}|}\int_{F_{N}\backslash F_{N}^{\prime}}\left(2\Vert x\Vert\right)dh\leq3\eta\Vert x\Vert.
\end{align*}
If $G$ is countable discrete, we instead assume that for all $h\in F_{N}$
(rather than $F_{N}^{\prime}$), $|F_{K}\Delta hF_{K}|<|F_{K}|\eta$.
Therefore, the penultimate line reduces to $\frac{1}{|F_{N}|}\int_{F_{N}}\eta\Vert x\Vert dh$,
and the last line reduces to $\eta\Vert x\Vert$. 

Finally let's discuss which properties of the Bochner integral we
had to use. If, for each $g$, $\pi(g)$ is a bounded linear operator,
then indeed it follows that $\pi(g^{-1})\left(\int\pi(h^{-1})xdh\right)=\int(\pi(g^{-1})\pi(h^{-1})xdh$.
If Fubini's theorem holds, then indeed $\int_{F_{K}}\int_{F_{N}}\pi((hg)^{-1})xdhdg=\int_{F_{N}}\int_{F_{K}}\pi((hg)^{-1})xdhdg$.
Here, Fubini's theorem is guaranteed by strong measurability, together
with the continuity of group multiplication (!) \textemdash{} see
Appendix A. Lastly, we repeatedly invoked the fact that $\Vert\int_{A}f(g)dg\Vert\leq\int_{A}\Vert f(g)\Vert dg$.
It's worth noting that retreating to the case where $G$ is countable,
only the first fact (that $G$ acts by bounded linear operators) is
needed as an assumption, as the latter two properties hold trivially
for finite averages.
\end{proof}
\begin{rem*}
It is possible to generalize this argument to the case where the action
of $G$ is ``power bounded'' in the sense that there is some uniform
constant $C$ such that for ($dg$-almost) all $g\in G$, $\Vert\pi(g)\Vert\leq C$.
However the argument for our main theorem necessitates setting $C=1$.
\end{rem*}
The following argument is a generalization of proof of Garrett Birkhoff
\cite{birkhoff1939} to the amenable setting. The statement of the
theorem is weaker than results which are already in Greenleaf's article
\cite{greenleaf1973ergodic}, but we include the argument for several
reasons. One is that it is very short; another is that we will ultimately
derive a bound on $\varepsilon$-fluctuations via a modification of
this proof; and finally, the proof indicates additional information
about the limiting behaviour of the norm of $A_{n}x$, namely that
$\lim_{n}\Vert A_{n}x\Vert=\inf_{n}\Vert A_{n}x\Vert$.
\begin{thm}
Let $G$ be a locally compact, second countable amenable group with
compact Følner sequence $(F_{n})$, and let $\mathcal{B}$ a uniformly
convex Banach space such that $G$ acts strongly on $\mathcal{B}$
via the representation $\pi:G\rightarrow\mathcal{L}_{1}(\mathcal{B},\mathcal{B})$.
Then for every $x\in\mathcal{B}$, the sequence of averages $(A_{n}x)$
converges in norm $\Vert\cdot\Vert_{\mathcal{B}}$.
\end{thm}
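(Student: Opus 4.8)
The plan is to follow Garrett Birkhoff's argument, using the lemma above as a replacement for the telescoping estimate available for $\mathbb{Z}$, and extracting convergence from uniform convexity. Throughout I abbreviate $A_n := A_n x$ and set $d := \inf_n \Vert A_n\Vert$.

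First I would show that $\Vert A_n\Vert \to d$. Fix $N$ and $\eta > 0$ and let $K \geq \beta(N,\eta)$. Since $\Vert A_K\Vert_{\mathcal{L}(\mathcal{B},\mathcal{B})} \leq 1$, the lemma gives
\[
\Vert A_K x\Vert \leq \Vert A_K A_N x\Vert + \Vert A_K x - A_K A_N x\Vert < \Vert A_N x\Vert + 3\eta\Vert x\Vert.
\]
Hence $\limsup_K \Vert A_K\Vert \leq \Vert A_N\Vert + 3\eta\Vert x\Vert$ for every $\eta > 0$, so $\limsup_K \Vert A_K\Vert \leq \Vert A_N\Vert$, and infimizing over $N$ gives $\limsup_K \Vert A_K\Vert \leq d$; since $\Vert A_K\Vert \geq d$ always, $\lim_n \Vert A_n\Vert = d$. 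In particular if $d = 0$ then $A_n x \to 0$ and we are done, so from now on assume $d > 0$.

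Next, to prove $(A_n x)$ is Cauchy, fix $\varepsilon > 0$; I will choose small parameters $t, \delta, \eta > 0$ at the end. Pick $M$ with $\Vert A_m\Vert < d + \delta$ for all $m \geq M$. For $m, n \geq M$ choose $K \geq \max\{\beta(m,\eta),\beta(n,\eta)\}$ and set $p := A_K A_m x$, $q := A_K A_n x$. The lemma yields $\Vert A_m x - p\Vert < 3\eta\Vert x\Vert$, $\Vert A_n x - q\Vert < 3\eta\Vert x\Vert$, $\Vert A_K x - p\Vert < 3\eta\Vert x\Vert$, and $\Vert A_K x - q\Vert < 3\eta\Vert x\Vert$. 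From $\Vert A_K\Vert \leq 1$ we get $\Vert p\Vert \leq \Vert A_m x\Vert < d + \delta$ and $\Vert q\Vert < d + \delta$, while
\[
\bigl\Vert\tfrac{1}{2}(p+q)\bigr\Vert \geq \Vert A_K x\Vert - \tfrac{1}{2}\Vert p - A_K x\Vert - \tfrac{1}{2}\Vert q - A_K x\Vert > d - 3\eta\Vert x\Vert.
\]

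Finally I would invoke uniform convexity, which is the only real content of the proof: it is the sole place where uniform convexity (as opposed to mere nonexpansiveness of the $A_n$) is used, and the bookkeeping of $t, \delta, \eta$ is the part requiring care. Let $u(\cdot)$ be a modulus of uniform convexity for $\mathcal{B}$, with $u(s) > 0$ for $s > 0$. Normalizing $\tilde p := p/(d+\delta)$ and $\tilde q := q/(d+\delta)$ puts both in the unit ball, so if $\Vert \tilde p - \tilde q\Vert \geq t$ then uniform convexity gives $\bigl\Vert\tfrac{1}{2}(\tilde p + \tilde q)\bigr\Vert < 1 - u(t)$, whence $\bigl\Vert\tfrac{1}{2}(p+q)\bigr\Vert < (d+\delta)(1-u(t))$. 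Comparing with the lower bound above forces $(d+\delta)\,u(t) < \delta + 3\eta\Vert x\Vert$, i.e. $u(t) < (\delta + 3\eta\Vert x\Vert)/d$. So I first choose $t \in (0, \varepsilon/(2d+2))$, and then choose $\delta \in (0,1)$ and $\eta > 0$ small enough that $(\delta + 3\eta\Vert x\Vert)/d < u(t)$ and $6\eta\Vert x\Vert < \varepsilon/2$. With these choices $\Vert \tilde p - \tilde q\Vert < t$, so $\Vert p - q\Vert < t(d+\delta) < \varepsilon/2$, and therefore
\[
\Vert A_m x - A_n x\Vert \leq \Vert A_m x - p\Vert + \Vert p - q\Vert + \Vert q - A_n x\Vert < 6\eta\Vert x\Vert + t(d+\delta) < \varepsilon.
\]
Thus $(A_n x)$ is Cauchy, and it converges because $\mathcal{B}$ is complete. (As a byproduct, Step 1 shows $\lim_n \Vert A_n x\Vert = \inf_n \Vert A_n x\Vert$, the extra information promised before the theorem.)
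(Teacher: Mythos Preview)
Your Step~1 is clean and correct, and the overall shape of the argument---combine the lemma with uniform convexity, use that $\Vert A_n x\Vert\to d$---is the right one. But there is a real gap: two of the four inequalities you attribute to the lemma are not what the lemma says. The lemma (with $K\geq\beta(m,\eta)$) gives
\[
\Vert A_K x - A_K A_m x\Vert < 3\eta\Vert x\Vert,
\]
i.e.\ $\Vert A_K x - p\Vert$ is small; it does \emph{not} give $\Vert A_m x - p\Vert = \Vert A_m x - A_K A_m x\Vert$ small. There is no reason $A_m x$ should be close to its own $A_K$-average; that would amount to $A_m x$ being almost invariant, which you have not established. The same objection applies to $\Vert A_n x - q\Vert$. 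Without those two estimates your final triangle inequality $\Vert A_m x - A_n x\Vert \leq \Vert A_m x - p\Vert + \Vert p-q\Vert + \Vert q - A_n x\Vert$ collapses, and the uniform-convexity step you carried out only bounds $\Vert p-q\Vert$, which was already $<6\eta\Vert x\Vert$ from the two legitimate lemma estimates and carries no information about $\Vert A_m x - A_n x\Vert$.

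The fix is to apply uniform convexity to $A_m x$ and $A_n x$ themselves rather than to $p$ and $q$. You already know $\Vert A_m x\Vert,\Vert A_n x\Vert < d+\delta$; and since $\Vert A_K\Vert\le 1$,
\[
\Bigl\Vert\tfrac{1}{2}(A_m x + A_n x)\Bigr\Vert \;\geq\; \Bigl\Vert A_K\cdot\tfrac{1}{2}(A_m x + A_n x)\Bigr\Vert \;=\; \Bigl\Vert\tfrac{1}{2}(p+q)\Bigr\Vert \;>\; d - 3\eta\Vert x\Vert,
\]
using only the two valid lemma estimates. Now normalize $A_m x$ and $A_n x$ by $d+\delta$ and run your uniform-convexity bookkeeping exactly as before; this directly forces $\Vert A_m x - A_n x\Vert$ to be small. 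This is essentially the paper's argument (which applies uniform convexity to $A_N x, A_M x$ and then hits the midpoint with $A_K$), just organized with your cleaner preliminary Step~1.
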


\begin{proof}
Without loss of generality, we assume $\Vert x\Vert\leq1$. Define
$L:=\inf_{n}\Vert A_{n}x\Vert$. Fix an $\varepsilon_{0}$, and let
$N$ be some index such that $\Vert A_{N}x\Vert<L+\varepsilon_{0}$.
Let $u$ denote the modulus of uniform convexity. Suppose that $M>N$
is an index such that $\Vert A_{N}x-A_{M}x\Vert>\delta$. (If no such
$\delta$ exists then this means that after $\beta(N,\eta)$, the
sequence has converged to within $\delta$.) Then this implies that
\[
\left\Vert \frac{1}{2}(A_{N}x+A_{M}x)\right\Vert \leq\max\{\Vert A_{N}x\Vert,\Vert A_{M}x\Vert\}-u(\varepsilon).
\]
The idea is that if we know $M\gg N$, then $\Vert A_{M}x\Vert\approx\Vert A_{M}A_{N}x\Vert\leq\Vert A_{N}x\Vert$.
Therefore, fix a Følner convergence modulus $\beta(n,\varepsilon)$
for $(F_{n})$, and suppose $M\geq\beta(N,\eta/(3\Vert x\Vert))$.
It follows from the lemma that $\Vert A_{M}x-A_{M}A_{N}x\Vert<\eta$,
and therefore 
\[
\left\Vert \frac{1}{2}(A_{N}x+A_{M}x)\right\Vert <\max\{\Vert A_{N}x\Vert,\Vert A_{M}A_{N}x\Vert+\eta\}-u(\delta).
\]
But $\Vert A_{M}A_{N}x\Vert\leq\Vert A_{N}x\Vert$, so this implies
\[
\left\Vert \frac{1}{2}(A_{N}x+A_{M}x)\right\Vert <\Vert A_{N}x\Vert+\eta-u(\delta).
\]
In turn, we know that $\Vert A_{N}x\Vert<L+\varepsilon_{0}$, and
by assumption $\Vert A_{N}x-A_{M}x\Vert<\delta$, so 
\[
\left\Vert \frac{1}{2}(A_{N}x+A_{M}x)\right\Vert <L+\varepsilon_{0}+\eta-u(\delta).
\]
In fact, it follows that $\Vert\frac{1}{2}A_{K}(A_{N}x+A_{M}x)\Vert<L+\varepsilon_{0}+\eta-u(\delta)$
also, for any index $K$. Now, choosing $K\geq\max\{\beta(N,\eta/(3\Vert x\Vert)),\beta(M,\eta/(3\Vert x\Vert))\}$,
we have that both $\Vert A_{K}x-A_{K}A_{N}x\Vert<\eta$ and $\Vert A_{K}x-A_{K}A_{M}x\Vert<\eta$.
Thus, 
\begin{align*}
\Vert A_{K}x\Vert & =\left\Vert \frac{1}{2}(A_{K}x-A_{K}A_{N}x)+\frac{1}{2}(A_{K}x-A_{K}A_{M}x)+\frac{1}{2}(A_{K}A_{N}+A_{K}A_{M})\right\Vert \\
 & \leq\eta+\left\Vert \frac{1}{2}A_{K}(A_{N}x+A_{M}x)\right\Vert \\
 & <2\eta+L+\varepsilon_{0}-u(\delta).
\end{align*}
Since $\eta$ can be chosen to be arbitrarily small provided that
$K$ (and $M$) is sufficiently large, we see that $\limsup\Vert A_{K}x\Vert\leq L+\varepsilon_{0}-u(\delta)$.
But since our choice of $\varepsilon_{0}$ was arbitrary, and $u(\delta)<\varepsilon_{0}+\eta$,
it follows that in fact $\limsup_{K}\Vert A_{K}x\Vert\leq m=\inf_{n}\Vert A_{n}x\Vert$.
Moreover this implies that $(A_{n}x)$ converges in norm. For if this
were not the case, then we could find some $\delta_{0}$ such that
$\Vert A_{n}x-A_{m}x\Vert>\delta_{0}$ infinitely often. Picking $\eta$
and $\varepsilon_{0}$ small enough that $2\eta+\varepsilon_{0}<u(\delta_{0})$,
and picking both $n$ and $m$ sufficiently large that $\Vert A_{n}x\Vert,\Vert A_{m}x\Vert<L+\varepsilon_{0}$,
the above computation shows that for $k$ larger that $\beta(m,\eta)$
and $\beta(n,\eta)$ we have that $\Vert A_{k}x\Vert<2\eta+L+\varepsilon_{0}-u(\delta_{0})<L$,
which contradicts the definition of $L$.
\end{proof}
We now proceed to deriving a quantitative analogue of this result.
To do so we introduce the following notion.
\begin{defn}
Let $G$ be a countable discrete or lcsc amenable group and $(F_{n})$
a Følner sequence. Let $\lambda\in\mathbb{N}$ and $\varepsilon>0$.
We say that $(F_{n})$ is a \emph{$(\lambda,\varepsilon)$-fast} Følner
sequence if

\begin{enumerate}
\item For $G$ countable and discrete, it holds that for all $n\in\mathbb{N}$
that for all $k\leq n$ and for all $m\geq k+\lambda$, for all $g\in F_{k}$,
$|F_{m}\Delta gF_{m}|/|F_{m}|<\varepsilon$.
\item For $G$ lcsc, it holds that for all $n\in\mathbb{N}$ that for all
$k\leq n$ and for all $m\geq k+\lambda$, there exists a set $F_{k}^{\prime}\subset F_{k}$
such that $|F_{k}\backslash F_{k}^{\prime}|<|F_{k}|\varepsilon$,
so that for all $g\in F_{k}^{\prime}$, $|F_{m}\Delta gF_{m}|/|F_{m}|<\varepsilon$.
\end{enumerate}
\end{defn}

It is clear that any Følner sequence can be refined into a $(\lambda,\varepsilon)$-fast
Følner sequence. Less clear is the relationship between a Følner sequence
being fast and the property of being \emph{tempered} which is used
in Lindenstrauss's pointwise ergodic theorem, although they are somewhat
similar in spirit.
\begin{prop}
Given $\lambda\in\mathbb{N}$ and $\varepsilon>0$, any Følner sequence
can be refined into a $(\lambda,\varepsilon)$-fast Følner sequence. 
\end{prop}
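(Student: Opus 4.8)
The plan is to produce the refinement as a subsequence $(\tilde{F}_{j})_{j\ge1}=(F_{n_{j}})_{j\ge1}$ of the given F{\o}lner sequence, built recursively so that each term $\tilde{F}_{m}$ is already (approximately) invariant under left translation by every element of the earlier terms $\tilde{F}_{1},\dots,\tilde{F}_{m-\lambda}$.

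First I would set $\tilde{F}_{j}=F_{j}$ for $1\le j\le\lambda$; these terms need no special property, because the $(\lambda,\varepsilon)$-fast condition only ever constrains a set $\tilde{F}_{m}$ as a translation target when $m\ge k+\lambda\ge1+\lambda$, hence never when $m\le\lambda$. For the recursive step, suppose $n_{1}<\dots<n_{m-1}$ have been chosen with $m-1\ge\lambda$, and put $K_{m}:=\bigcup_{k=1}^{m-\lambda}\tilde{F}_{k}$, a finite set when $G$ is discrete and a compact set (a finite union of compact sets) when $G$ is lcsc. When $G$ is discrete, apply the F{\o}lner property to $K_{m}$ to obtain $N_{m}$ with $|F_{n}\Delta gF_{n}|<\varepsilon|F_{n}|$ for all $n\ge N_{m}$ and all $g\in K_{m}$. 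When $G$ is lcsc, apply the F{\o}lner property once to each of the finitely many compact sets $\tilde{F}_{1},\dots,\tilde{F}_{m-\lambda}$, getting indices $N_{m,k}$ such that for $n\ge N_{m,k}$ there is $\tilde{F}_{k}'\subseteq\tilde{F}_{k}$ with $|\tilde{F}_{k}\setminus\tilde{F}_{k}'|<\varepsilon|\tilde{F}_{k}|$ and $|F_{n}\Delta gF_{n}|<\varepsilon|F_{n}|$ for all $g\in\tilde{F}_{k}'$; set $N_{m}:=\max_{k\le m-\lambda}N_{m,k}$. In both cases let $n_{m}:=\max\{n_{m-1}+1,N_{m}\}$ and $\tilde{F}_{m}:=F_{n_{m}}$. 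By construction $n_{1}<n_{2}<\cdots$, so $(\tilde{F}_{j})$ is a genuine subsequence and therefore still a F{\o}lner sequence (given $\varepsilon'>0$ and $K$ finite or compact, $n_{j}\to\infty$ forces all but finitely many $\tilde{F}_{j}$ to satisfy the original F{\o}lner estimate). Finally, $(\lambda,\varepsilon)$-fastness is read off directly: if $m\ge k+\lambda$ then $k\le m-\lambda$, so $\tilde{F}_{k}$ was among the sets controlled at stage $m$, and $n_{m}\ge N_{m}$ yields exactly the inequality $|\tilde{F}_{m}\Delta g\tilde{F}_{m}|<\varepsilon|\tilde{F}_{m}|$ for $g\in\tilde{F}_{k}$ (respectively, for $g$ in the $\varepsilon$-large subset $\tilde{F}_{k}'$).

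The argument is really just a diagonal construction, so I do not expect a serious obstacle; the one place that needs attention is the lcsc case. There the F{\o}lner property only guarantees, for a fixed target set, an \emph{approximately} invariant subset of each source set, while the definition of $(\lambda,\varepsilon)$-fast demands a subset $\tilde{F}_{k}'$ whose complement is small \emph{relative to $|\tilde{F}_{k}|$ for each $k$ separately}. Applying the F{\o}lner property to the union $K_{m}$ would only bound the complement relative to $|K_{m}|$, which is why the construction must invoke it once per source set $\tilde{F}_{k}$; after that, everything reduces to routine bookkeeping.
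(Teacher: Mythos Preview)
Your argument is correct and follows essentially the same diagonal-subsequence idea as the paper. The paper's version is terser: it first observes that a $(1,\varepsilon)$-fast refinement is automatically $(\lambda,\varepsilon)$-fast for every $\lambda\ge 1$, and then only writes out the recursive step in the countable discrete case, requiring at stage $j+1$ approximate invariance under all of $\bigcup_{i=1}^{j}F_{n_i}$. Your version handles general $\lambda$ directly and is more careful about the lcsc case; in particular, your observation that one must invoke the F{\o}lner property once per source set $\tilde{F}_k$ (rather than once for the union $K_m$) to obtain complements that are small relative to each $|\tilde{F}_k|$ is a genuine detail the paper omits.
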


\begin{proof}
It suffices to produce a $(1,\varepsilon)$-fast refinement. For simplicity,
we only state the argument for the case where $G$ is countable and
discrete. 

Suppose we have already selected the first $j$ Følner sets in our
refinement $F_{n_{1}},\ldots,F_{n_{j}}$. Then, take $F_{n_{j+1}}$
to be the next element of the sequence $(F_{n})$ after $n_{j}$ such
that, for all $g\in\bigcup_{i=1}^{j}F_{n_{i}}$, $|F_{n_{j+1}}\Delta gF_{n_{j+1}}|/|F_{n_{j+1}}|<\varepsilon$.
Such a term exists since $(F_{n})$ is a Følner sequence. 
\end{proof}

Let's now count the $\varepsilon$-fluctuations. We first do so ``at
distance $\beta$'' (see Chapter 1), and then recover a global bound
in the case where the Følner sequence is fast. The only really non-explicit
of the proof of the preceding theorem was the step where we used the
fact that an infimum of a real sequence exists. In contrast to Kohlenbach
and Leustean, who perform a functional interpretation on the classical
statement asserting the existence of an infimum, we will just use
the crude fact that the infimum is nonnegative.

Fix a non-decreasing Følner convergence modulus $\beta$ for $(F_{n})$.
Suppose that $\Vert A_{n_{0}}x-A_{n_{1}}x\Vert\geq\varepsilon$. Moreover,
we suppose that $n_{1}\geq\beta(n_{0},\eta/3\Vert x\Vert)$. 

{} Then the computation from the previous proof shows that 
\[
\left\Vert \frac{1}{2}(A_{n_{0}}x+A_{n_{1}}x)\right\Vert <\Vert A_{n_{0}}x\Vert+\eta-u(\varepsilon).
\]
More generally, if $\Vert A_{n_{i}}x-A_{n_{i+1}}x\Vert\geq\varepsilon$
with $n_{i+1}\geq\beta(n_{i},\eta/3\Vert x\Vert)$, it follows that
\[
\left\Vert \frac{1}{2}(A_{n_{i}}x+A_{n_{i+1}}x)\right\Vert <\Vert A_{i}x\Vert+\eta-u(\delta).
\]
Now, choosing $k\geq\max\{\beta(n_{i+1},\eta/(3\Vert x\Vert)),\beta(n_{i},\eta/(3\Vert x\Vert))\}$,
we have that both $\Vert A_{k}x-A_{k}A_{n_{i}}x\Vert<\eta$ and $\Vert A_{k}x-A_{k}A_{n_{i+1}}x\Vert<\eta$.
Thus, 
\begin{align*}
\Vert A_{k}x\Vert & =\left\Vert \frac{1}{2}(A_{k}x-A_{k}A_{n_{i}}x)+\frac{1}{2}(A_{k}x-A_{k}A_{n_{i+1}}x)+\frac{1}{2}(A_{k}A_{n_{i}}x+A_{k}A_{n_{i+1}}x)\right\Vert \\
 & \leq\eta+\left\Vert \frac{1}{2}A_{k}(A_{n_{i}}x+A_{n_{i+1}}x)\right\Vert \\
 & <2\eta+\Vert A_{n_{i}}x\Vert-u(\varepsilon).
\end{align*}
Therefore let $n_{i+2}$ equal the least index greater than $\max\{\beta(n_{i+1},\eta/(3\Vert x\Vert)),\beta(n_{i},\eta/(3\Vert x\Vert))\}$
(and therefore greater than $\beta(n_{i+1},\eta/(3\Vert x\Vert))$,
since $\beta$ is non-decreasing in $n$) such that $\Vert A_{n_{i+1}}x-A_{n_{i+2}}x\Vert\geq\varepsilon$.
The previous calculation shows that $\Vert A_{n_{i+2}}x\Vert<\Vert A_{n_{i}}x\Vert+2\eta-u(\varepsilon)$.
More generally, we have that 
\[
\Vert A_{n_{i}}x\Vert<\Vert A_{n_{0}}x\Vert-\frac{i}{2}\left(u(\varepsilon)-2\eta\right)\quad i\text{ even}
\]
\[
\Vert A_{n_{i}}x\Vert<\Vert A_{n_{1}}x\Vert-\frac{i-1}{2}(u(\varepsilon)-2\eta)\quad i\text{ odd}
\]
So simply from the fact that $\Vert A_{n_{i}}x\Vert\geq0$, these
expressions derive a contradiction on the least $i$ such that 
\[
\max\left\{ \Vert A_{n_{0}}x\Vert,\Vert A_{n_{1}}x\Vert\right\} <\frac{i-1}{2}(u(\varepsilon)-2\eta)
\]
since this would imply that $\Vert A_{n_{i}}(x)\Vert<0$. That is,
the contradiction implies that the $n_{i}$th epsilon fluctuation
could not have occurred. We have no a priori information on the norms
of $\Vert A_{n_{0}}x\Vert$ and $\Vert A_{n_{1}}x\Vert$, except that
both are at most $\Vert x\Vert$. Therefore, we have the following
uniform bound: 
\[
i\leq\left\lfloor \frac{2\Vert x\Vert}{u(\varepsilon)-2\eta}+1\right\rfloor 
\]
where $i$ tracks the indices of the subsequence along which $\varepsilon$-fluctuations
occur. This is actually \emph{one more} than the number of $\varepsilon$-fluctuations,
so instead we have that the number of $\varepsilon$-fluctuations
is bounded by $\left\lfloor \frac{2\Vert x\Vert}{u(\varepsilon)-2\eta}\right\rfloor $.

If we happen to have any lower bound on the infimum of $\Vert A_{n}x\Vert$,
we can sharpen the previous calculation. Instead of using the fact
that $\Vert A_{n}x\Vert\geq0$, we use the fact that $\Vert A_{n}x\Vert\geq L$
for some $L$. To wit, if $i$ is large enough that 
\[
\Vert x\Vert<\frac{i-1}{2}(u(\varepsilon)-2\eta)+L
\]
Then this would imply that $\Vert A_{n_{i}}x\Vert<L$, a contradiction.
Therefore we have the bound 
\[
i\leq\left\lfloor \frac{2(\Vert x\Vert-L)}{u(\varepsilon)-2\eta}+1\right\rfloor 
\]
and so the number of $\varepsilon$-fluctuations is bounded by $\left\lfloor \frac{2(\Vert x\Vert-L)}{u(\varepsilon)-2\eta}\right\rfloor $.

To summarize, we have shown that:
\begin{thm}
Let $\mathcal{B}$ be a uniformly convex Banach space with modulus
$u$. Fix $\varepsilon>0$ and $x\in\mathcal{B}$ with $\Vert x\Vert\leq1$.
Pick some $\eta<\frac{1}{2}u(\varepsilon)$. Then if $G\curvearrowright\mathcal{B}$
with Følner sequence $(F_{n})$, the sequence $(A_{n}x)$ has at most
$\left\lfloor \frac{2\Vert x\Vert}{u(\varepsilon)-2\eta}\right\rfloor $
$\varepsilon$-fluctuations at distance $\beta(n,\eta/3\Vert x\Vert)$.
If we know that $\inf\Vert A_{n}x\Vert\geq L$, then we can sharpen
the bound to $\left\lfloor \frac{2(\Vert x\Vert-L)}{u(\varepsilon)-2\eta}\right\rfloor $.
\end{thm}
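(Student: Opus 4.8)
The plan is to turn the computation displayed immediately before the statement into a clean finite induction. Assume without loss of generality that $x\neq0$ (otherwise the averages are constant) and $\Vert x\Vert\le1$. Let $n_0<n_1<\cdots<n_k$ be any finite sequence witnessing $\varepsilon$-fluctuations at distance $\beta(\cdot,\eta/3\Vert x\Vert)$, so that $\Vert A_{n_i}x-A_{n_{i+1}}x\Vert\ge\varepsilon$ and $n_{i+1}\ge\beta(n_i,\eta/3\Vert x\Vert)$ for every $i$; I want to bound $k$. The driving estimate is uniform convexity applied to $A_{n_i}x$ and $A_{n_{i+1}}x$, both of norm $\le1$: the midpoint norm drops by $u(\varepsilon)$ below the larger of the two norms. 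Feeding in the Lemma with its $\eta$ set to $\eta/(3\Vert x\Vert)$ (so that $3\cdot(\eta/3\Vert x\Vert)\cdot\Vert x\Vert=\eta$), together with $\Vert A_{n_{i+1}}A_{n_i}x\Vert\le\Vert A_{n_i}x\Vert$ coming from $\Vert A_{n_{i+1}}\Vert\le1$, yields
\[
\left\Vert\tfrac12\bigl(A_{n_i}x+A_{n_{i+1}}x\bigr)\right\Vert<\Vert A_{n_i}x\Vert+\eta-u(\varepsilon).
\]

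Next I would push this forward under a later ergodic average. Since each $A_\kappa$ is nonexpansive, the same inequality holds with $\tfrac12 A_\kappa(A_{n_i}x+A_{n_{i+1}}x)$ on the left. Picking any $\kappa\ge\max\{\beta(n_i,\eta/3\Vert x\Vert),\beta(n_{i+1},\eta/3\Vert x\Vert)\}$ and applying the Lemma twice more gives $\Vert A_\kappa x-A_\kappa A_{n_i}x\Vert<\eta$ and $\Vert A_\kappa x-A_\kappa A_{n_{i+1}}x\Vert<\eta$, whence the triangle inequality produces $\Vert A_\kappa x\Vert<2\eta+\Vert A_{n_i}x\Vert-u(\varepsilon)$. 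Having arranged that $\beta$ is non-decreasing in its first argument (replacing it by the envelope $\tilde\beta$ of Section 2 if needed), let $n_{i+2}$ be the least fluctuation index exceeding this $\kappa$: then $n_{i+2}\ge\beta(n_{i+1},\eta/3\Vert x\Vert)$ is automatically satisfied, and we obtain the two-step descent $\Vert A_{n_{i+2}}x\Vert<\Vert A_{n_i}x\Vert+2\eta-u(\varepsilon)$.

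Iterating this descent separately along even and odd indices yields $\Vert A_{n_i}x\Vert<\Vert A_{n_0}x\Vert-\frac{i}{2}(u(\varepsilon)-2\eta)$ for even $i$ and the analogue with $\Vert A_{n_1}x\Vert$ for odd $i$. The hypothesis $\eta<\tfrac12 u(\varepsilon)$ forces $u(\varepsilon)-2\eta>0$, so these bounds go negative for large $i$; since $\Vert A_{n_i}x\Vert\ge0$ always and $\Vert A_{n_0}x\Vert,\Vert A_{n_1}x\Vert\le\Vert x\Vert$, no contradiction is reached only if $i\le\lfloor 2\Vert x\Vert/(u(\varepsilon)-2\eta)+1\rfloor$. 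As this index is one past the last genuine fluctuation, the number of $\varepsilon$-fluctuations is at most $\lfloor 2\Vert x\Vert/(u(\varepsilon)-2\eta)\rfloor$. For the sharpened statement one simply replaces the lower bound $\Vert A_{n_i}x\Vert\ge0$ by $\Vert A_{n_i}x\Vert\ge L$, which turns $\Vert x\Vert$ into $\Vert x\Vert-L$ in the final count.

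The step I expect to require the most care is the bookkeeping around the distance function: one must verify that after jumping to a large $\kappa$ and then to the next fluctuation index $n_{i+2}$, the pair $(n_{i+1},n_{i+2})$ still meets the constraint $n_{i+2}\ge\beta(n_{i+1},\eta/3\Vert x\Vert)$. This is precisely why $\beta$ must be taken non-decreasing and why $\kappa$ is chosen at least $\beta(n_{i+1},\eta/3\Vert x\Vert)$; granting this, the remainder is a routine chain of triangle inequalities and invocations of the Lemma, with no appeal to the existence of the infimum beyond the trivial bound $\Vert A_{n_i}x\Vert\ge0$ (or $\ge L$).
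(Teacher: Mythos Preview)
Your proposal is correct and follows essentially the same argument as the paper: apply uniform convexity to the pair $A_{n_i}x,\,A_{n_{i+1}}x$, use the Lemma (with the $\eta/3\Vert x\Vert$ scaling) to control $\Vert A_{n_{i+1}}x\Vert$ by $\Vert A_{n_i}x\Vert+\eta$, push the midpoint inequality forward under a later average $A_\kappa$ with $\kappa\ge\beta(n_{i+1},\eta/3\Vert x\Vert)\ge\beta(n_i,\eta/3\Vert x\Vert)$, and then take $\kappa=n_{i+2}$ to obtain the two-step descent and iterate along even/odd indices. Your identification of the monotonicity of $\beta$ as the point requiring care matches the paper's own treatment; the one cosmetic wrinkle is that, having fixed the witnessing sequence $n_0<\cdots<n_k$ at the outset, you should simply \emph{take} $\kappa=n_{i+2}$ rather than ``let $n_{i+2}$ be the least fluctuation index exceeding $\kappa$,'' but this does not affect the argument.
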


\begin{cor}
In the above setting, suppose that $(F_{n})$ is $(\lambda,\eta/3\Vert x\Vert)$-fast.
Then the sequence $(A_{n}x)$ has at most $\lambda\cdot\left\lfloor \frac{2\Vert x\Vert}{u(\varepsilon)-2\eta}\right\rfloor +\lambda$
$\varepsilon$-fluctuations.
\end{cor}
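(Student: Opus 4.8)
The plan is to side-step the ``at distance $\beta$'' restriction by re-running the descent argument that proved the theorem above, using the fact that $(\lambda,\eta/3\Vert x\Vert)$-fastness makes $\beta(n,\eta/3\Vert x\Vert):=n+\lambda$ an admissible F\o lner convergence modulus, so that each appeal to the lemma needs to look only $\lambda$ steps ahead.

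The first and, I expect, only genuinely new point is a \emph{distance-free} form of the single descent step. If $\Vert A_{a}x-A_{b}x\Vert\geq\varepsilon$ with $a<b$, then uniform convexity gives $\Vert\tfrac{1}{2}(A_{a}x+A_{b}x)\Vert\leq\max\{\Vert A_{a}x\Vert,\Vert A_{b}x\Vert\}-u(\varepsilon)$; applying the operator $A_{k}$, using $\Vert A_{k}\Vert\leq1$, and invoking the lemma twice --- which requires only $k\geq\beta(a,\eta/3\Vert x\Vert)$ and $k\geq\beta(b,\eta/3\Vert x\Vert)$, i.e.\ $k\geq b+\lambda$ by fastness --- yields
\[
\Vert A_{k}x\Vert<\max\{\Vert A_{a}x\Vert,\Vert A_{b}x\Vert\}-\bigl(u(\varepsilon)-2\eta\bigr)\qquad\text{for every }k\geq b+\lambda .
\]
In the proof of the theorem, the ``at distance $\beta$'' hypothesis was used only to promote $\max\{\Vert A_{a}x\Vert,\Vert A_{b}x\Vert\}$ to $\Vert A_{a}x\Vert$; here I simply retain the maximum, and pay for it below with the factor $\lambda$.

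With this in hand I would run the descent essentially verbatim. Let $n_{1}<n_{2}<\dots<n_{M}$ witness $M-1$ many $\varepsilon$-fluctuations of $(A_{n}x)$, put $c:=u(\varepsilon)-2\eta>0$ and $\Lambda:=\lfloor 2\Vert x\Vert/c\rfloor$. Feeding the pair $(n_{1},n_{2})$ into the displayed inequality gives $\Vert A_{k}x\Vert<\Vert x\Vert-c$ for all $k\geq n_{2}+\lambda$. Letting $n_{p_{1}}$ be the first witness term with $n_{p_{1}}\geq n_{2}+\lambda$ (if there is none, the witness already has at most $\lambda+1$ terms) and feeding in $(n_{p_{1}},n_{p_{1}+1})$ gives $\Vert A_{k}x\Vert<\Vert x\Vert-2c$ for $k\geq n_{p_{1}+1}+\lambda$; iterate. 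After $r$ steps, $\Vert A_{k}x\Vert<\Vert x\Vert-rc$ for all large $k$, and since $\Vert A_{k}x\Vert\geq0$ --- the only use of the infimum being nonnegative, exactly as before --- one gets $r\leq\Lambda$. The combinatorial heart is that between two consecutive descent steps the witness carries at most $\lambda$ terms: by minimality of $p_{j+1}$, the terms $n_{p_{j}+1},\dots,n_{p_{j+1}-1}$ all lie in the half-open interval $[n_{p_{j}+1},\,n_{p_{j}+1}+\lambda)$ and hence number at most $\lambda$. A short count, together with the analogous bound on the portion of the witness past the last descent step, then yields $M-1\leq\lambda\Lambda+\lambda$. (If a lower bound $\inf_{n}\Vert A_{n}x\Vert\geq L$ is available, replacing $0$ by $L$ sharpens $\Lambda$ to $\lfloor 2(\Vert x\Vert-L)/c\rfloor$, just as for the theorem.)

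The main obstacle is thus the observation in the second paragraph --- that the ``max'' form of the descent inequality holds with \emph{no} distance hypothesis relating the two fluctuation endpoints, fastness being needed only to control how far ahead the lemma must reach. A purely combinatorial route --- cite the theorem's distance-$\beta$ bound and then thin an arbitrary fluctuation witness to one at distance $\beta$ --- does not work, because subsampling a fluctuation witness need not preserve the property that consecutive terms are $\varepsilon$-separated; it is the analytic re-derivation that makes the factor $\lambda$ appear cleanly.
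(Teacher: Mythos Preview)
Your proof is correct, and it is considerably more careful than the paper's own argument. The paper's proof of this corollary is a two-sentence sketch: it invokes the theorem as a black box to get at most $\Lambda$ $\varepsilon$-fluctuations at distance $\lambda$ (using that $(\lambda,\eta/3\Vert x\Vert)$-fastness makes $n\mapsto n+\lambda$ an admissible F\o lner convergence modulus), and then asserts that any remaining fluctuations must lie in ``$\Lambda$ many gaps of width $\lambda$'' plus one final interval of width at most $\lambda$, giving the bound $\lambda\Lambda+\lambda$.

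Your final paragraph correctly diagnoses the weakness of this purely combinatorial route: there is no obvious way to pass from an arbitrary fluctuation witness to a fluctuation-at-distance-$\lambda$ witness of comparable length, because subsampling destroys the property that consecutive terms are $\varepsilon$-separated. The paper's ``gap'' picture implicitly assumes such a reduction works (or, read charitably, that the greedy witness constructed in the theorem's proof already controls \emph{all} fluctuations between its successive terms), but neither is justified; in particular, greediness only ensures $\Vert A_{n_{i+1}}x-A_jx\Vert<\varepsilon$ for $j\in[n_{i+1}+\lambda,n_{i+2})$, which does not preclude $\varepsilon$-fluctuations \emph{among} those $A_jx$. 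Your re-derivation of the single descent step in ``max'' form --- with no distance hypothesis between the fluctuation endpoints $a$ and $b$, only the requirement $k\geq b+\lambda$ for where the drop is registered --- is precisely the analytic ingredient that closes this gap, and it is absent from the paper's version. So while both arguments are after the same descent, yours supplies the missing step that the paper's brief wrap-up glosses over.
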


\begin{proof}
We know from the theorem that there are at most $\left\lfloor \frac{2\Vert x\Vert}{u(\varepsilon)-2\eta}\right\rfloor $
$\varepsilon$-fluctuations at distance $\lambda$. This leaves the
possibility that there are some $\varepsilon$-fluctuations in the
$\left\lfloor \frac{2\Vert x\Vert}{u(\varepsilon)-2\eta}\right\rfloor $
many gaps of width $\lambda$, and also that there are some $\varepsilon$-fluctuations
in between the last possible index $n_{i}$ given by the previous
theorem, and the index $n_{i+1}$ at which contradiction is achieved.
This end last interval is at most $\lambda$ wide as well.
\end{proof}

\section{Discussion}

Our proof was carried out in the setting where the acted upon space
was assumed to be uniformly convex, and indeed our bound on the number
of fluctuations explicitly depends on the modulus of uniform convexity.
Nonetheless, it is natural to ask whether an analogous result might
be obtained for a more general class of acted upon spaces.

However, it has already been observed, in the case where $G=\mathbb{Z}$,
that there exists a separable, reflexive, and strictly convex Banach
space $\mathcal{B}$ such that for every $N$ and $\varepsilon>0$,
there exists an $x\in\mathcal{B}$ such that $(A_{n}x)$ has at least
$N$ $\varepsilon$-fluctuations \cite{avigad2015oscillation}. This
counterexample applies equally to bounds on the rate of metastability.

\textcolor{red}{{} }However, this counterexample does not directly eliminate
the possibility of a fluctuation bound for $\mathcal{B}=L^{1}(X,\mu)$,
so the question of a ``quantitative $L^{1}$ mean ergodic theorem
for amenable groups'' remains unresolved. 

What about the choice of acting group? Our assumptions on $G$ (amenable
and countable discrete or lcsc) where selected because this is the
most general class of groups which have Følner sequences. Our argument
depends essentially on Følner sequences; indeed, proofs of ergodic
theorems for actions of non-amenable groups have a qualitatively different
structure. Remarkably, there are certain classes of non-amenable groups
whose associated ergodic theorems have much stronger convergence behaviour
than the classical ($G=\mathbb{Z}$) setting; for recent progress
on quantitative ergodic theorems in the non-amenable setting, we refer
the reader to the book and survey article of Gorodnik and Nevo \cite{gorodnik2009ergodic,gorodnik2015quantitative}.

We should also mention quantitative bounds for \emph{pointwise} ergodic
theorems. For $G=\mathbb{Z}$ such results go as far back as Bishop's
upcrossing inequality. Inequalities of this type have also been found
for $\mathbb{Z}^{d}$ by Kalikow and Weiss (for $F_{n}=[-n,n]^{d}$)
\cite{kalikow1999fluctuations}; more recently Moriakov has modified
the Kalikow and Weiss argument to give an upcrossing inequality for
symmetric ball averages in groups of polynomial growth \cite{moriakov2018fluctuations}.
Presently it is unknown whether similar results hold for any larger
class of amenable groups. 

For both norm and pointwise convergence of ergodic averages, it is
sometimes possible to deduce convergence behavior which is stronger
than $\varepsilon$-fluctuations/upcrossings but weaker than an explicit
rate of convergence, namely that a sequence is \emph{bounded in total
variation} in a uniform fashion; these results are called \emph{variational
inequalities}. Jones et al. have succeeded in proving numerous variational
inequalities, both for norm and pointwise convergence, for a large
class of Følner sequences in $\mathbb{Z}$ and $\mathbb{Z}^{d}$ \cite{jones1998oscillation,jones2003oscillation}.
However, their methods, which rely on a martingale comparison and
a Calderón-Zygmund decomposition, exploit numerous incidental geometric
properties of $\mathbb{Z}^{d}$ which do not hold for many other groups.
It would be interesting to determine which other groups enjoy similar
variational inequalities.

\chapter*{Appendix A: Bochner integration}

Consider some measure space $(X,\mu)$ with some function $f:X\rightarrow\mathcal{B}$,
with $\mathcal{B}$ a Banach space. What would it look like to integrate
$f$?

One approach is to start with simple functions. In this setting, an
indicator function $\chi_{A}$ is real-valued as usual, but the ``scalar
coefficients'' are replaced by values in the Banach space. Thus $f(x)$
is a simple function if it is of the form $\sum_{i=1}^{N}1_{A_{i}}(x)b_{i}$
with $1_{A_{i}}(x)$ an indicator function for $A_{i}\subset X$ and
$b_{i}\in\mathcal{B}$. We then say that a function $f$ is \emph{strongly
measurable }if it a pointwise limit of simple functions, i.e. if there
exists a sequence $f_{n}$ of simple functions such that for every
$x\in X$, $\Vert f(x)-f_{n}(x)\Vert_{\mathcal{B}}\rightarrow0$. 

In general strong measurability is hard to come by. A more general
notion is \emph{weak measurability}: we say that $f:X\rightarrow\mathcal{B}$
is \emph{weakly measurable} if for every $b^{*}\in\mathcal{B}^{*}$,
the function $b^{*}\circ f:X\rightarrow\mathbb{R}$ is measurable
(in the ordinary sense as a function from $(X,\mu)$ to $\mathbb{R}$
with the Borel $\sigma$-algebra), which is a bit more manageable.
The following classical result indicates when weak measurability implies
strong measurability.
\begin{prop}
\emph{(Pettis measurability theorem) }Let $(X,\mu)$ be a measure
space and $\mathcal{B}$ a Banach space. For a function $f:X\rightarrow\mathcal{B}$
the following are equivalent:

\begin{enumerate}
\item $f$ is strongly measurable.
\item f is weakly measurable, and $f(X)$ is separable in $\mathcal{B}$. 
\end{enumerate}
\end{prop}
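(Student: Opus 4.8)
The plan is to prove the two implications separately. The direction $(1)\Rightarrow(2)$ is routine, so I would dispatch it first, and then spend the real effort on $(2)\Rightarrow(1)$.

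For $(1)\Rightarrow(2)$: suppose $f$ is a pointwise limit of simple functions $f_{n}$. Each $f_{n}(X)$ is finite, so $\bigcup_{n}f_{n}(X)$ is countable, and its closure contains $f(X)$; hence $f(X)$ is separable. For weak measurability, fix $b^{*}\in\mathcal{B}^{*}$: each $b^{*}\circ f_{n}$ is a real-valued simple (hence measurable) function, and $b^{*}\circ f_{n}\to b^{*}\circ f$ pointwise by continuity of $b^{*}$, so $b^{*}\circ f$ is measurable as a pointwise limit of measurable functions.

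For $(2)\Rightarrow(1)$, I would proceed in three stages. \emph{Reduction to a separable space:} replace $\mathcal{B}$ by the closed linear span $\mathcal{B}_{0}:=\overline{\mathrm{span}}\,f(X)$, which is separable; $f$ remains weakly measurable into $\mathcal{B}_{0}$ because every functional on $\mathcal{B}_{0}$ extends to $\mathcal{B}$ by Hahn--Banach. \emph{A countable norming family:} using separability of $\mathcal{B}_{0}$ and Hahn--Banach again, pick a countable dense set $(z_{k})\subset\mathcal{B}_{0}$ and norm-one functionals $b_{k}^{*}$ with $b_{k}^{*}(z_{k})=\Vert z_{k}\Vert$; a density argument shows $\Vert z\Vert=\sup_{k}\lvert b_{k}^{*}(z)\rvert$ for all $z\in\mathcal{B}_{0}$. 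Consequently, for each fixed $y\in\mathcal{B}_{0}$ the map $x\mapsto\Vert f(x)-y\Vert=\sup_{k}\lvert b_{k}^{*}(f(x))-b_{k}^{*}(y)\rvert$ is measurable, since each $b_{k}^{*}\circ f$ is measurable by weak measurability.

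\emph{Construction of the approximating simple functions:} fix a countable dense subset $(y_{k})$ of $\mathcal{B}_{0}$ and let $f_{n}(x)$ be whichever of $y_{1},\dots,y_{n}$ is closest to $f(x)$, with ties broken by least index. Then $f_{n}$ takes only the values $y_{1},\dots,y_{n}$, and each preimage $\{f_{n}=y_{j}\}$ is measurable because it is carved out by the measurable functions $x\mapsto\Vert f(x)-y_{i}\Vert$; so $f_{n}$ is simple. For pointwise convergence: given $x$ and $\varepsilon>0$, density yields $j_{0}$ with $\Vert f(x)-y_{j_{0}}\Vert<\varepsilon$, and then for every $n\geq j_{0}$ one has $\Vert f_{n}(x)-f(x)\Vert=\min_{1\leq i\leq n}\Vert f(x)-y_{i}\Vert\leq\Vert f(x)-y_{j_{0}}\Vert<\varepsilon$. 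Hence $f_{n}\to f$ pointwise and $f$ is strongly measurable.

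The main obstacle is the $(2)\Rightarrow(1)$ direction, and within it the two Hahn--Banach steps: reducing to the separable subspace and extracting the countable norming family, which together are what make $x\mapsto\Vert f(x)-y\Vert$ measurable. Once that measurability is available, the ``closest among the first $n$ points'' trick simultaneously delivers genuine simple functions and pointwise convergence with no further difficulty — the cleverness is in choosing to minimize over $y_{1},\dots,y_{n}$ rather than over all $y_{k}$ at once.
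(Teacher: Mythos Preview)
Your proof is correct and follows the standard route to the Pettis measurability theorem. Note, however, that the paper does not actually supply a proof of this proposition: it is quoted as a classical result in Appendix~A and immediately followed by ``Here are some easy consequences,'' so there is no paper proof to compare against. Your argument --- the Hahn--Banach reduction to a separable subspace, the countable norming sequence making $x\mapsto\Vert f(x)-y\Vert$ measurable, and the nearest-neighbour simple approximants --- is exactly the textbook proof one would find in, e.g., Hyt\"onen et al.\ \cite{hytonen2016analysis}, which the paper cites for background on Bochner integration.
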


Here are some easy consequences.
\begin{prop}
If the measure space $(X,\mu)$ is also a separable topological space,
and $f:X\rightarrow\mathcal{B}$ is continuous, then $f$ is strongly
measurable.
\end{prop}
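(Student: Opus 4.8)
The plan is to invoke the Pettis measurability theorem stated just above, so that it suffices to verify the two conditions in its part (2): that $f$ is weakly measurable, and that $f(X)$ is separable in $\mathcal{B}$.

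First I would check weak measurability. Fix $b^{*}\in\mathcal{B}^{*}$. Since $b^{*}$ is (norm-)continuous on $\mathcal{B}$ and $f$ is continuous by hypothesis, the composite $b^{*}\circ f:X\rightarrow\mathbb{R}$ is continuous. Hence the preimage under $b^{*}\circ f$ of every open, and therefore of every Borel, subset of $\mathbb{R}$ is open in $X$, and so measurable (here one uses the standing compatibility between the topology and the $\sigma$-algebra on $X$, namely that open sets are measurable). Thus $b^{*}\circ f$ is measurable; as $b^{*}\in\mathcal{B}^{*}$ was arbitrary, $f$ is weakly measurable.

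Next I would check separability of $f(X)$. Let $D\subseteq X$ be a countable dense subset, which exists because $X$ is separable. Continuity of $f$ gives $f(\overline{D})\subseteq\overline{f(D)}$, and since $\overline{D}=X$ this yields $f(X)\subseteq\overline{f(D)}$. Therefore the countable set $f(D)\subseteq f(X)$ is dense in $f(X)$ (with the subspace topology), so $f(X)$ is separable in $\mathcal{B}$. Applying the Pettis measurability theorem then shows that $f$ is strongly measurable.

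I do not expect a genuine obstacle here; the argument is a direct bookkeeping exercise with the Pettis criterion. The only point that deserves a word of care is the implicit hypothesis that the measurable structure on $X$ is compatible with its topology (that Borel sets are measurable), which is what makes continuous scalar functions on $X$ measurable in the first place; this is the only place the two structures on $X$ interact.
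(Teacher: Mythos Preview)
Your proof is correct and follows the same approach as the paper: both verify the two conditions of the Pettis measurability theorem by noting that $b^{*}\circ f$ is continuous (hence measurable) and that the continuous image of a separable space is separable. Your version is simply more explicit, spelling out the density argument for $f(X)$ and flagging the implicit compatibility between the $\sigma$-algebra and the topology on $X$.
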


\begin{proof}
Observe that for any $b^{*}\in\mathcal{B}^{*}$, $b^{*}\circ f$ is
a composition of continuous functions, and is therefore continuous.
Hence $f$ is weakly measurable. Moreover, it holds that the continuous
image of a separable space is separable.
\end{proof}
\begin{prop}
If $(Y,\nu)$ is also a topological space, $\phi:(Y,\nu)\rightarrow(X,\mu)$
is continuous, and $f:(X,\mu)\rightarrow\mathcal{B}$ is strongly
measurable, then $f\circ\phi$ is strongly measurable.
\end{prop}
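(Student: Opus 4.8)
The plan is to reduce the claim to the Pettis measurability theorem recorded above, rather than to chase approximating simple functions through $\phi$ (the latter would force one to keep careful track of which $\sigma$-algebras on $X$ and $Y$ the simple functions are adapted to). By Pettis it suffices to verify two properties of $f\circ\phi\colon Y\to\mathcal{B}$: that it is weakly measurable, and that its range is separable in $\mathcal{B}$.

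First I would handle the range. Since $(f\circ\phi)(Y)=f(\phi(Y))\subseteq f(X)$, and $f$ is strongly measurable, the Pettis theorem (applied to $f$) tells us $f(X)$ is separable in $\mathcal{B}$. As $\mathcal{B}$ is a metric space, every subset of a separable subset is again separable, so $(f\circ\phi)(Y)$ is separable. Next I would check weak measurability: fix $b^{*}\in\mathcal{B}^{*}$ and write $b^{*}\circ(f\circ\phi)=(b^{*}\circ f)\circ\phi$. Because $f$ is in particular weakly measurable, $b^{*}\circ f\colon X\to\mathbb{R}$ is Borel measurable; and $\phi$, being continuous, is Borel measurable as a map $Y\to X$. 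A Borel measurable real-valued function composed with a Borel measurable map is Borel measurable, so $b^{*}\circ(f\circ\phi)$ is measurable. Since $b^{*}$ was arbitrary, $f\circ\phi$ is weakly measurable. Combining the two facts with the Pettis measurability theorem gives that $f\circ\phi$ is strongly measurable.

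The only delicate point — and it is the thing I expect to need a sentence of comment rather than real work — is what ``measurable'' means on $X$ and $Y$: as in the two preceding propositions, one takes the Borel $\sigma$-algebras, and then continuity of $\phi$ automatically supplies the measurability used in the weak-measurability step, so no genuine obstacle arises. (If one instead insisted on working with the possibly larger completed $\sigma$-algebra of $\mu$, one would simply note that a continuous map pulls Borel sets back to Borel sets, which is all the argument uses.)
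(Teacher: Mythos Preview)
Your proposal is correct and follows essentially the same route as the paper: invoke the Pettis measurability theorem by checking that $(f\circ\phi)(Y)\subseteq f(X)$ is separable and that $b^{*}\circ(f\circ\phi)=(b^{*}\circ f)\circ\phi$ is measurable for each $b^{*}\in\mathcal{B}^{*}$. Your treatment of the weak measurability step is in fact slightly more careful than the paper's, which sloppily writes that $b^{*}\circ f$ is ``continuous'' where it should say ``measurable.''
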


\begin{proof}
By hypothesis, $f$ is also weakly measurable, so for any $b^{*}\in\mathcal{B}^{*}$,
we have that $b^{*}\circ f$ is continuous. Therefore $b^{*}\circ(f\circ\phi)=(b^{*}\circ f)\circ\phi$
is continuous, and thus $f\circ\phi$ is weakly measurable. Now, let
$A=\phi(Y)$ be the image of $\phi$ in $X$. Note that $(f\circ\phi)(Y)=f(A)$.
Since $f(X)$ is separable in $\mathcal{B}$, it follows that $f(A)$
is also separable in $\mathcal{B}$ since it is contained in $f(X)$. 
\end{proof}
From this, we deduce the following fact which is important for our
purposes:
\begin{prop}
Suppose that $G$ acts strongly measurably on $\mathcal{B}$ (that
is, for every $x\in\mathcal{B}$, $g\mapsto\pi(g)x$ is strongly measurable).
Then for each $A\subset G$ with $\mu(A)<\infty$, we have that $1_{A}\pi(g)x$
is Bochner integrable for each $x\in\mathcal{B}$, i.e. $\int_{A}\Vert\pi(g)x\Vert d\mu(g)<\infty$.
Moreover, $\int_{A\times B}\Vert\pi(gh)x\Vert d\mu(g)\times d\mu(h)<\infty$,
and in particular $1_{A\times B}\pi(gh)x$ is strongly measurable
from $G\times G$ to $\mathcal{B}$. 
\end{prop}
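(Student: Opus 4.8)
The plan is to reduce both claims to two facts already in hand: every operator $\pi(g)$ is a contraction (because $\pi$ takes values in $\mathcal{L}_{1}(\mathcal{B},\mathcal{B})$), and a strongly measurable $\mathcal{B}$-valued function is Bochner integrable exactly when its norm is Lebesgue integrable (part (4) of the Proposition on vector-valued integration recalled above).

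First I would dispatch the single-variable statement. Fix $x\in\mathcal{B}$ and $A\subseteq G$ with $\mu(A)<\infty$. Since $\Vert\pi(g)\Vert\le 1$ for every $g$, one has the pointwise bound $\Vert\pi(g)x\Vert\le\Vert x\Vert$, whence $\int_{A}\Vert\pi(g)x\Vert\,d\mu(g)\le\Vert x\Vert\,\mu(A)<\infty$. By hypothesis $g\mapsto\pi(g)x$ is strongly measurable, and multiplying a strongly measurable function by the scalar function $1_{A}$ keeps it strongly measurable; so the Bochner integrability criterion applies and $1_{A}\pi(g)x$ is Bochner integrable.

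For the two-variable statement I would first establish strong measurability of $(g,h)\mapsto\pi(gh)x$ and then repeat the integrability estimate. Write $m\colon G\times G\to G$, $m(g,h)=gh$, which is continuous because $G$ is a topological group. Equip $G\times G$ with the product topology and the product Haar measure; as $G$ is second countable, $G\times G$ is second countable (hence separable) and its Borel $\sigma$-algebra is the product $\sigma$-algebra, so ``strongly measurable from $G\times G$'' is unambiguous. Then, since $g\mapsto\pi(g)x$ is strongly measurable from $G$ to $\mathcal{B}$, the preceding proposition on composing a strongly measurable function with a continuous map gives that $(g,h)\mapsto\pi(gh)x=(\,g\mapsto\pi(g)x\,)\circ m$ is strongly measurable from $G\times G$ to $\mathcal{B}$, and multiplication by $1_{A\times B}$ preserves this. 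Finally the same contraction bound yields $\Vert\pi(gh)x\Vert\le\Vert x\Vert$, so $\int_{A\times B}\Vert\pi(gh)x\Vert\,d(\mu\times\mu)\le\Vert x\Vert\,\mu(A)\,\mu(B)<\infty$ whenever $\mu(A),\mu(B)<\infty$, and Bochner integrability follows once more.

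I do not expect a genuine obstacle here: the only step demanding any care is the claim that precomposing $g\mapsto\pi(g)x$ with the multiplication map again produces a strongly measurable function — which is precisely why the earlier proposition on stability of strong measurability under precomposition with continuous maps was isolated, and why it matters that multiplication in $G$ is continuous rather than merely measurable. Everything else is just the uniform bound $\Vert\pi(g)\Vert\le 1$ together with the finiteness of $\mu(A)$ and of $\mu(A)\mu(B)$.
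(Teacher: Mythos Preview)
Your proposal is correct and follows exactly the same route as the paper: the contraction bound $\Vert\pi(g)x\Vert\le\Vert x\Vert$ plus finite measure for part (1), and precomposition of the strongly measurable map $g\mapsto\pi(g)x$ with the continuous multiplication $m(g,h)=gh$ for part (2). If anything, you have added more detail than the paper does (the remarks on the product $\sigma$-algebra and on indicator functions preserving strong measurability), but the argument is the same.
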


\begin{proof}
(1) Since $\pi(\cdot)x$ is a strongly measurable function from $G$
to $\mathcal{B}$, it suffices to observe that 
\[
\int_{A}\Vert\pi(g)x\Vert d\mu(g)\leq\int_{A}\Vert x\Vert d\mu(g)<\infty.
\]
(2) Since $G$ is a topological group, we know that group multiplication
is continuous. Therefore $(g,h)\mapsto\pi(gh)x$ is strongly measurable,
since it is a composition of the continuous multiplication function
and the strongly measurable function $\pi(\cdot)x$. We also have
Bochner integrability because again, 
\[
\int_{A\times B}\Vert\pi(gh)x\Vert d\mu(g)\times d\mu(h)\leq\int_{A\times B}\Vert x\Vert d\mu(g)\times d\mu(h)<\infty.
\]
\end{proof}

\chapter*{Appendix B: Two logical addenda}

\section*{B.1 Effective learnability versus fluctuations at distance $\beta$}

The primary proof-theoretic reference on fluctuations at distance
$\beta$ is \emph{Fluctuations, effective learnability, and metastability
in analysis} by Kohlenbach and Safarik \cite{kohlenbach2014fluctuations}.
(This is indicated, for example, in Towsner's paper \emph{Nonstandard
analysis gives bounds on jumps} \cite{towsner2017nonstandard}, which
addresses fluctuations at distance $\beta$ from a model-theoretic
perspective; see also B.2 below.) However, this work does not actually
directly refer to fluctuations at distance $\beta$ anywhere! We therefore
spend a few words explaining how this paper \emph{is} actually talking
about fluctuations at distance $\beta$, albeit couched in a markedly
different vocabulary. 

Here, the term \emph{Cauchy statement} refers to a statement of the
form 
\[
\varphi(k):=\exists n\in\mathbb{N}\forall j\in\mathbb{N}(j\geq n\rightarrow d(x_{j},x_{n})<2^{-k}).
\]
Structurally, a Cauchy statement (with $k$ fixed in advance) has
the form 
\[
\exists n\in\mathbb{N}\forall j\in\mathbb{N}\varphi_{0}(j,n,(x_{n}))
\]
and is \emph{monotone} in $n$, i.e.
\[
\forall n\in\mathbb{N}\forall n^{\prime}\geq n\forall j\in\mathbb{N}(\varphi_{0}(j,n,(x_{n})\rightarrow\varphi_{0}(j,n^{\prime}(x_{n}))
\]
 if we think of the particular sequence $(x_{n})$ as being a parameter.
(Monotonicity just encodes the fact that if we assert that a sequence
has converged to within $\varepsilon$ after $N$, then the same holds
for $N^{\prime}\geq N$.) Thus, Cauchy statements are a special case
of statements of the above form, (namely monotone $\Sigma_{2}^{0}$
formulas with a single sequence parameter). 

Now, suppose that we attempt to ``learn'' the limit of a real-valued
sequence up to some error term (say $2^{-k}$), in the following fashion. 
\begin{enumerate}
\item Before looking at the sequence, we guess that the limit is in $(x_{0}-2^{-k},x_{0}+2^{-k})$.
For the sake of notational consistency, put $c_{0}:=0$ to denote
the fact that $c_{0}$ is our initial guess for an index of the sequence
$(x_{n})$ such that all terms of higher index stay within the $2^{-k}$-ball
around the term with index $c_{0}$. 
\item After looking at the first $j-1$ terms of the sequence, we have a
current guess $c_{i}$. If $x_{j}\in(c_{i}-2^{-k},c_{i}+2^{-k})$,
then we keep our guess the same, and keep $c_{i}$. Otherwise, put
$c_{i+1}=j$. 
\end{enumerate}
Evidently, this procedure will terminate (in the sense that the guess
$c_{n}$ is modified only a finite number of times) iff the Cauchy
statement $\varphi(k)$ is true about the sequence $(x_{n})$! More
generally, this procedure will terminate for every $k$ iff $(x_{n})$
is a Cauchy sequence. However, more directly, the learning procedure
we have just described ``changes its mind'' every time it detects
an $\varepsilon$-fluctuation; asserting that the learning procedure
will terminate after a fixed number of steps is thus equivalent to
asserting that if we search along the sequence $(x_{n})$ in a specific
way, we will find at most a fixed number of $\varepsilon$-fluctuations. 

However this is far from the only learning procedure we can set up
which serves to check whether a sequence converges (up to some error
term). An abstract version of this (which works just as well for any
monotone $\Sigma_{2}^{0}$ formula with a single sequence parameter,
not just Cauchy statements) is that we have a \emph{learning functional
}$L(j,(x_{n}))$, and we define our list of guesses $c_{1},c_{2},\ldots$
by $c_{0}=0$, and at the $j$th stage, put 
\[
c_{i+1}=L(j,(x_{n}))\mbox{ if }\neg\varphi_{0}(j,c_{i},(x_{n}))\wedge\forall j^{\prime}<j,\varphi_{0}(j^{\prime},c_{i},(x_{n}))
\]
and otherwise we keep the old $c_{i}$. Associated to the learning
procedure $L(j,(x_{n}))$, we have a \emph{bound} functional $B((x_{n}))$,
for which the following sentence holds: 
\[
\exists i\leq B((x_{n})),\forall j,\varphi_{0}(j,c_{i},(x_{n})).
\]
In other words, $B$ takes a sequence and gives us an upper bound
on how many ``mind changes'' $L$ needs to make (in other words,
how many candidates $c_{i}$ $L$ needs to come up with before it
finds one that works for all $j\in\mathbb{N}$ in the formula $\varphi_{0}(j,c_{i},(x_{n}))$). 

We say that a formula $\varphi$ (which is monotone $\Sigma_{2}^{0}$
with a single sequence parameter) is $(B,L)$\emph{-learnable} if
such a pair of functionals $B$ and $L$ exist. We say that $\varphi$
is \emph{effectively $(B,L)$}-learnable if moreover $B$ and $L$
are computable relative to $(x_{n})$. Then, specifying a family of
sequences which the parameter $(x_{n})$ is allowed to range over
corresponds to asking whether a specific property \emph{about} this
family of sequences is $(B,L$)-learnable. 

(Kohlenbach and Safarik then proceed to relax the monotonicity requirement,
but the more general definition of a learning procedure is more intricate.) 

The paper goes on to study the circumstances under which it is possible
to mechanically extract the functionals $B$ and $L$, and demonstrate
that effective learnability is a strictly stronger form of effective
convergence than metastability, but weaker than an effective bound
on fluctuations. (The paper also gives a sufficient condition for
when it is possible to \emph{deduce} an effective bound on fluctuations
from effective learnability.) 

To tie all of this abstraction back to the topic at hand, we briefly
illustrate how fluctuations at distance $\beta$ can be viewed as
a special type of $(B,L$)-learnability. (Consequently, it follows
that from a computability standpoint, fluctuations at distance $\beta$
is stronger than metastability.) Fix $\varepsilon=2^{-k}$, and suppose
that $\mathcal{S}$ is a family of sequences which has a uniform bound
$K$ on the number of $\varepsilon$-fluctuations at distance $\beta$.
On $\mathcal{S}$, we run the following learning procedure: initialize
$c_{0}=0$, and put 
\[
c_{i+1}=j\mbox{ if }\neg\varphi_{0}(j,c_{i},(x_{n}))\wedge\forall j^{\prime}<j,\varphi_{0}(j^{\prime},c_{i},(x_{n}))
\]
where $\varphi_{0}(j,n,(x_{n}))$ a modified form of the Cauchy sequence,
namely $(j\geq\beta(n)\rightarrow d(x_{j},x_{n})<2^{-k})$. This learning
procedure runs as follows: if an $\varepsilon$-fluctuation is detected
(specifically, $d(x_{j},x_{c_{i}})\geq2^{-k}$), then we put $j=c_{i+1}$.
We then \emph{ignore} all potential witnesses of $\varepsilon$-fluctuations
until we get to $\beta(c_{i+1})$, and then begin searching for a
new index $j^{\prime}$ such that $d(x_{j^{\prime}},x_{c_{i+1}})\geq2^{-k}$. 

The assumption that $\mathcal{S}$ has a uniform bound on $\varepsilon$-fluctuations
at distance $\beta$ implies that this learning procedure has at most
$K$ many mind-changes. Thus for $\mathcal{S}$, we can put $B((x_{n}))=K$
and $L(j,(x_{n}))=j$, and $\mathcal{S}$ is effectively learnable
(with respect to our modified version of the Cauchy sequence that
encodes the ``at distance $\beta$'' condition). 

\section*{B.2 Weak modes of uniform convergence and nonstandard compactness
principles}

This section is not really self-contained, but is being included to
indicate, for posterity, some of the genesis of the project described
in the main text. \emph{Caveat lector}. (The cited works are, however,
eminently readable, and the author would certainly encourage the curious
reader to peruse them.) 

Tacitly lurking beneath a great deal of this thesis is a nonstandard
analysis/ultraproduct interpretation of things like metastability
and bounds on fluctuations. In the main part of the text, we derived
our fluctuation bound, in some sense, with bare hands. But how might
one come to believe that it would be \emph{possible }to do so, or
what sort of data the bound would need to depend on?

Before we address those questions, we demonstrate the connection between
metastability and convergence properties ``in the ultraproduct''.
We quickly recap some nonstandard analysis facts. Here, $*$ denotes
the transfer functor (a.k.a. nontstandard extension/embedding); no
harm would come to the reader to think of the transfer functor as
taking an ultrapower (with an index set possibly much larger than
$\mathbb{N}$). We say that a nonstandard sequence (for instance,
an internal function from$^{*}\mathbb{N}$ to $^{*}\mathbb{R}$) is
\emph{externally Cauchy }if, for every \textbf{standard} $\varepsilon>0$
there exists a \textbf{standard} $N$ such that for all \textbf{standard
$n,m\geq N$, $|x_{n}-x_{m}|>\varepsilon$}. (Contrast with this with
\emph{internally Cauchy}, which corresponds to replacing all the bolded
``standard''s with ``standard or nonstandard'': the property of
being internally Cauchy comes from applying the $*$ functor to the
property of being Cauchy in the ordinary sense.) Externally/internally
metastable is defined in a similar fashion; see also the proof below. 
\begin{prop}
\emph{(Avigad-Iovino \cite{avigad2013ultraproducts}, Tao \cite{tao2012metastable})}
Let $\mathcal{C}$ be a class whose elements are pairs $\langle(X,d),(a_{n})\rangle$
of metric spaces $(X,d)$ together with distinguished sequences $(a_{n})$.
We can consider the class $^{*}\mathcal{C}$, whose elements are internal
metric spaces together with internal ($^{*}\mathbb{N}$-indexed) sequences.
TFAE:

\begin{enumerate}
\item The sequences $(a_{n})$ of $\mathcal{C}$ admit a uniform rate of
metastability, i.e. a function $\psi(F,\varepsilon)$ which jointly
witnesses the metastability of every $(a_{n})$. 
\item Every sequence in $^{*}\mathcal{C}$ is externally metastable. 
\item Every sequence in $^{*}\mathcal{C}$ is externally Cauchy. 
\end{enumerate}
\end{prop}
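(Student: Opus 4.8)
The plan is to establish the cycle $(1)\Rightarrow(2)\Rightarrow(3)\Rightarrow(1)$. The two ``easy'' implications use only the transfer principle together with the elementary fact that a sequence in a metric space is metastable for every strictly increasing test function if and only if it is Cauchy; the substantive implication is $(3)\Rightarrow(1)$, which is the usual ``nonstandard compactness'' step. For $(1)\Rightarrow(2)$: a uniform rate of metastability $\psi$ is a piece of ground-model data, so I would transfer the true sentence asserting that for every $\langle(X,d),(a_n)\rangle\in\mathcal{C}$, every strictly increasing $F$, and every $\varepsilon>0$ there is an $N\le\psi(F,\varepsilon)$ with $d(a_n,a_m)<\varepsilon$ whenever $n,m\in[N,F(N)]$. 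Instantiating the transferred sentence at an arbitrary element of ${}^{*}\mathcal{C}$ and at a standard $F$ and standard $\varepsilon$, the bound ${}^{*}\psi(F,\varepsilon)=\psi(F,\varepsilon)$ is a standard finite natural, so the witness $N$ it produces is standard and $F(N)$ is standard; hence the internal sequence is externally metastable.

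For $(2)\Rightarrow(3)$ I would run the classical ``metastable implies Cauchy'' argument inside the external world. Fix an internal sequence $(A_n)$ in ${}^{*}\mathcal{C}$ and suppose it fails to be externally Cauchy: there is a standard $\varepsilon_0>0$ such that for every standard $N$ some standard pair $n,m\ge N$ has ${}^{*}d(A_n,A_m)\ge\varepsilon_0$. Since the standard naturals form an order-isomorphic copy of $\mathbb{N}$, this lets me define externally a ground-model strictly increasing function $F_0\colon\mathbb{N}\to\mathbb{N}$ with $F_0(N)$ chosen so that some standard pair in $[N,F_0(N)]$ is at ${}^{*}d$-distance $\ge\varepsilon_0$. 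Then ${}^{*}F_0$ is a standard test function, and external metastability of $(A_n)$ applied to $\varepsilon_0$ and ${}^{*}F_0$ furnishes a standard $N$ on whose block $[N,F_0(N)]$ all standard pairs are $\varepsilon_0$-close, contradicting the choice of $F_0$.

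The heart of the matter is $(3)\Rightarrow(1)$, which I would prove by contradiction, compressing a sequence of ``partial'' counterexamples in $\mathcal{C}$ into one internal sequence in ${}^{*}\mathcal{C}$. Suppose that for some strictly increasing $F$ and some $\varepsilon>0$ no uniform metastability bound exists. Then for each $k\in\mathbb{N}$ there is an element $c_k=\langle(X_k,d_k),(a^{(k)}_n)\rangle$ of $\mathcal{C}$ such that for every $N\le k$ some pair $n,m\in[N,F(N)]$ has $d_k(a^{(k)}_n,a^{(k)}_m)\ge\varepsilon$; fix such a ground-model assignment $k\mapsto c_k$. Transferring the universal statement ``for all $k$, $c_k\in\mathcal{C}$ and $c_k$ has the displayed bad property'' and evaluating at an infinite hypernatural $\omega$, the internal object ${}^{*}c(\omega)$ belongs to ${}^{*}\mathcal{C}$; writing $d$ for its internal metric and $(A_n)_{n\in{}^{*}\mathbb{N}}$ for its distinguished sequence, we obtain: for every $N\le\omega$ there are $n,m\in[N,{}^{*}F(N)]$ with $d(A_n,A_m)\ge\varepsilon$. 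Now invoke hypothesis $(3)$: $(A_n)$ is externally Cauchy, so there is a standard $N_0$ with $d(A_n,A_m)<\varepsilon$ for all standard $n,m\ge N_0$. Instantiating the bad property at $N:=N_0$ (legitimate because $N_0\le\omega$) produces $n,m\in[N_0,F(N_0)]$ --- necessarily standard, being bounded by the standard finite number $F(N_0)$, and $\ge N_0$ --- with $d(A_n,A_m)\ge\varepsilon$, the desired contradiction. Hence a uniform bound exists for each $(F,\varepsilon)$, and selecting the least such bound defines a uniform rate of metastability.

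I expect the one genuinely delicate point to lie in this last step: manufacturing, from a mere $\mathbb{N}$-indexed family of ``$N\le k$''-counterexamples, a single honest internal sequence in ${}^{*}\mathcal{C}$ to which hypothesis $(3)$ applies. This relies on (i) enough choice to pick the $c_k$, (ii) transfer, guaranteeing both that ${}^{*}c(\omega)\in{}^{*}\mathcal{C}$ and that the bad property persists to every hyper-index $\le\omega$, and (iii) the small but load-bearing observation that a hypernatural dominated by a standard finite natural is itself standard --- precisely the fact that lets the internal failure at index $N_0$ collide with external Cauchyness at standard indices. The remaining effort is routine bookkeeping of the standard/internal distinction and keeping every ``$\varepsilon$'' standard wherever the external notions require it.
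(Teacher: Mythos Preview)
Your proof is correct and the underlying ideas match the paper's, but the packaging differs in two respects worth noting. First, you organize the equivalence as a cycle $(1)\Rightarrow(2)\Rightarrow(3)\Rightarrow(1)$, whereas the paper proves $(1)\Leftrightarrow(2)$ and $(2)\Leftrightarrow(3)$ separately; in particular the paper establishes the ``hard'' direction as $\neg(1)\Rightarrow\neg(2)$ rather than your $\neg(1)\Rightarrow\neg(3)$. Second, for that hard direction you fix a ground-model sequence $k\mapsto c_k$ of counterexamples, transfer it, and evaluate at an infinite hypernatural $\omega$ to obtain a single internal object in ${}^{*}\mathcal{C}$ whose bad property persists at every standard $N$; the paper instead forms the internally definable set $\{k\in{}^{*}\mathbb{N}:\text{some element of }{}^{*}\mathcal{C}\text{ has least witness }>k\}$, observes it contains every standard natural, and invokes overspill. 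These are two equivalent presentations of the same nonstandard compactness phenomenon---your ``evaluate at $\omega$'' trick is arguably cleaner and avoids having to verify that the relevant set is internal, while the paper's overspill argument keeps the logical complexity of the transferred statement lower. The $(1)\Rightarrow(2)$ step and the external metastable-iff-Cauchy argument are handled identically in both.
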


\begin{proof}
$(1\Rightarrow2)$ Let $\psi(F,\varepsilon)$ be a uniform rate of
metastability for $\mathcal{C}$. By transfer, for every internal
metric space in $^{*}\mathcal{C}$, $^{*}\psi(F,\varepsilon)$ is
a uniform bound on
{} nonstandard metastability, in the sense that for every distinguished
sequence $(a_{n})_{n\in^{*}\mathbb{N}}$, and every internal function
$F$ from $^{*}\mathbb{N}$ to $^{*}\mathbb{N}$, and every $\varepsilon\in\phantom{}^{*}\mathbb{R}_{+}$,
then $^{*}\psi(F,\varepsilon)$ returns an $N\in\phantom{}^{*}\mathbb{N}$
such that for all $m,n\in[N,F(N)]$, $^{*}d(a_{n},a_{m})<\varepsilon$.
By the principle that standard functions take standard values at standard
points, if $F:\phantom{}^{*}\mathbb{N}\rightarrow\phantom{}^{*}\mathbb{N}$
is the transfer of a standard function and $\varepsilon\in\mathbb{R}_{+}$,
then $\psi(F,\varepsilon)=N$ and $F(N)$ are both in $\mathbb{N}$.
Thus, $(a_{n})\upharpoonright\mathbb{N}$ is externally metastable.

( $2\Leftarrow1$ ) Conversely, assume that no uniform rate of metastability
exists for $\mathcal{C}$. Fix $F:\mathbb{N}\rightarrow\mathbb{N}$,
and fix $\varepsilon$, and suppose that there exists a sequence $(X_{i},d_{i})_{i\in\mathbb{N}}$
of metric spaces such that the least $N_{i}$s witnessing the metastability
of the sequences $(a_{n,i})_{i\in\mathbb{N}}$ form a divergent sequence
of natural numbers. By transfer, each $N_{i}$ is also the appropriate
witness of metastability for $^{*}(a_{n,i})$ on the internal metric
space $^{*}(X,d)$. Now consider the set of hypernaturals which are
less than some $^{*}F,\varepsilon$-witness of metastability for some
element of $^{*}\mathcal{C}$, in other words the set 
\[
\left\{ k\in\phantom{}^{*}\mathbb{N}\mid(\exists\{(X,d),(a_{n})\}\in\phantom{}^{*}\mathcal{C})\min\left\{ N\in\phantom{}^{*}\mathbb{N}\mid\forall n,m\in[N,\phantom{}^{*}F(N)],d(a_{n}a_{m})<\varepsilon]\right\} >k\right\} 
\]
which by assumption now contains every $n\in\mathbb{N}$. However
it is internally defined, so by overspill it contains some initial
segment of $\phantom{}^{*}\mathbb{N}\backslash\mathbb{N}$. In turn
this implies that there is some internal metric space in $^{*}\mathcal{C}$
such that the least $N$ witnessing the metastability for $^{*}F$
and $\varepsilon$, of the distinguished sequence $(a_{n})$, is in
$^{*}\mathbb{N}$, so in particular no \emph{standard} witness for
metastability at $^{*}F$ and $\varepsilon$ exists, therefore $(a_{n})$
is not externally metastable.

$(2\Leftrightarrow3)$ It remains to be shown that external metastability
is equivalent to the external Cauchy property. But while this does
not follow by transfer per se, the argument is identical to the standard
case. If $(a_{n})$ is externally Cauchy, then for every standard
$\varepsilon$ there exists a standard $N$ such that for all $n,m\geq N$,
$d(a_{n},a_{m})<\varepsilon$. A fortiori, for any increasing $F:\mathbb{N}\rightarrow\mathbb{N}$
and all $n,m\in[N,F(N)]$, $d(a_{n},a_{m})<\varepsilon$. Conversely,
if $(a_{n})$ is not externally Cauchy, there exists a standard $\varepsilon$
such that for all standard $N$, there exist standard $n,m\geq N$
such that $d(a_{n},a_{m})<\varepsilon$. Picking $F:\mathbb{N}\rightarrow\mathbb{N}$
such that $n,m\in[N,F(N)]$, we see that $d(a_{n},a_{m})$ is not
externally metastable. 
\end{proof}
\begin{rem*}
We present this proof in the ``synthetic'' style of nonstandard
analysis, in contrast with the proof given in Avigad-Iovino \cite{avigad2013ultraproducts},
which is carried out using explicit manipulation of an ultraproduct.
The distinction between these two approaches is largely one of taste;
the underlying idea of the proof is the same.

Analogous compactness theorems, relating types of nonstandard convergence
to uniform bouds on fluctuations, and fluctuations at distance $\beta$
more generally, have been recently given in Towsner's paper \emph{Nonstandard
analysis gives bounds on jumps }\cite{towsner2017nonstandard}.
\end{rem*}
What this proposition tells us is that one way to test for the existence
of a uniform bound on the rate of metastability for a class of sequences
is to take the ultraproduct/nonstandard extension, and check whether
an arbitrary ($^{*}\mathbb{N}$-indexed) sequence ``in the ultraproduct''
is externally Cauchy. 

This may sound like more work than it's worth! But in certain cases
it can be a handy diagnostic tool. Indeed, if the class $\mathcal{C}$
of sequences has some associated convergence proof (like say, a class
of ergodic averages), a natural way to test for external Cauchy convergence
is to check whether the \emph{same proof} applies to sequences in
the ultraproduct, mutatis mutandis. Typically, this amounts to requiring
that all of the classes of objects named in the proof are closed under
ultraproducts - for example, if the proof mentions reflexive Banach
spaces, it is known that reflexive Banach spaces are not closed under
ultraproducts, so if the original proof essentially relies on reflexivity,
we know this will be lost in the ultraproduct and so the original
proof does not ``pass through to the ultraproduct'', and (ultimately)
we are unable to prove external Cauchy convergence. 

To tie this back to some of the objects we saw earlier in the thesis:
it is known that uniformly convex Banach spaces with a specified modulus
of uniform convexity are closed under ultraproducts. Likewise, it
is known that amenable groups are \emph{not }closed under ultraproducts,
but there are several moduli one can affix to a class of amenable
groups that result in that class being closed under ultraproducts. 

To see this type of argument in action, we refer the reader to either
Avigad and Iovino's \emph{Ultraproducts and Metastability }\cite{avigad2013ultraproducts}
or Towsner's \emph{Nonstandard analysis gives bounds on jumps} \cite{towsner2017nonstandard}.
An (unpublished) argument of this type was used to by the author derive
an earlier prototype of the main result of this thesis, namely that
for countable amenable groups acting on Hilbert spaces, there exists
a uniform bound on the rate of metastability which depends on $\varepsilon$,
$\Vert f\Vert$, and some data from the choice of Folner sequence.
In fact, it was reading Towsner's paper that caused the author to
suspect that it might be possible at all to get some sort of fluctuation
bound in the amenable setting.

However, these ultraproduct compactness arguments only indicate that
a uniform bound on the rate of metastability (resp. number of fluctuations),
depending only on certain data, \emph{exists}, in a non-constructive
sense; it does not actually tell us anything about what the bound
looks like, or even that the bound is \emph{computable} from the stipulated
data. In practice, it is not hard to informally reverse-engineer the
ultraproduct argument to get an \emph{explicit }bound, as we have
essentially done; it is an interesting question whether/under what
circumstances this type of reverse engineering can be \emph{mechanized},
say in the form of a proof translation. 

\bibliographystyle{plain}
\nocite{*}
\bibliography{ms}

\end{document}